\newfont{\ffi}{cmfi10 scaled 1000}
\newcommand{\mc}[1]{{\mathcal{#1}}}
\newcommand{\mf}[1]{{\mathfrak{#1}}}
\newcommand{\bb}[1]{{\mathbb{#1}}}
\DeclareMathOperator{\dom}{dom}
\DeclareMathOperator{\spn}{span}
\DeclareMathOperator{\id}{id}
\DeclareMathOperator{\clos}{clos}
\DeclareMathOperator{\Sub}{Sub}
\DeclareMathOperator{\rba}{rba}
\DeclareMathOperator{\Adm}{Adm}
\DeclareMathOperator{\mt}{mt}
\newcommand{\PW}{{\mc P\hspace*{-1pt}W\!}}
\DeclareMathOperator{\Min}{Min}
\newcommand{\qu}{\overline}
\newcommand{\comment}[1]{}
\DeclareMathOperator{\RE}{Re}
\DeclareMathOperator{\IM}{Im}
\renewcommand{\Re}{\RE}
\renewcommand{\Im}{\IM}
\newlength{\maxlabwidth}
\newenvironment{axioms}[1]{
    \setlength{\maxlabwidth}{#1}
    \begin{list}{}{
    \setlength{\rightmargin}{2mm}
    \setlength{\leftmargin}{\maxlabwidth}\addtolength{\leftmargin}{2mm}
    \setlength{\labelsep}{0mm}
    \setlength{\labelwidth}{\maxlabwidth}
    \setlength{\itemindent}{0mm}
    
    }
    }{
    \end{list}
    }
\numberwithin{equation}{section}
\theoremstyle{plain}
	\newtheorem{lem}{Lemma}[section]
	\newtheorem{pro}[lem]{Proposition}
	\newtheorem{thm}[lem]{Theorem}
	\newtheorem{cor}[lem]{Corollary}
	\newtheorem{namth}[lem]{}
\theoremstyle{definition}
	\newtheorem{defi}[lem]{Definition}
\theoremstyle{remark}
	\newtheorem{rem}[lem]{Remark}
	\newtheorem{exa}[lem]{Example}
	\newtheorem{namre}[lem]{}
\renewcommand{\qedsymbol}{\raisebox{-2pt}{\large\ding{113}}}
\newcommand{\defendsymbol}{}
\newcommand{\qedsymbolsave}{\qedsymbol}
\newenvironment{lemma}[2][]{\begin{lem}[#1]\label{LE#2}}{\end{lem}}
\newcommand{\leref}[1]{Lemma \ref{LE#1}}
\newenvironment{proposition}[2][]{\begin{pro}[#1]\label{PR#2}}{\end{pro}}
\newcommand{\prref}[1]{Proposition \ref{PR#1}}
\newenvironment{theorem}[2][]{\begin{thm}[#1]\label{TH#2}}{\end{thm}}
\newcommand{\thref}[1]{Theorem \ref{TH#1}}
\newenvironment{corollary}[2][]{\begin{cor}[#1]\label{CO#2}}{\end{cor}}
\newcommand{\coref}[1]{Corollary \ref{CO#1}}
\newenvironment{definition}[2][]{\begin{defi}[#1]\label{DE#2}}{
	\renewcommand{\qedsymbolsave}{\qedsymbol}\renewcommand{\qedsymbol}{\defendsymbol}
	\popQED{\qed}\renewcommand{\qedsymbol}{\qedsymbolsave}\end{defi}}
\newenvironment{remark}[2][]{\begin{rem}[#1]\label{RE#2}}{
	\renewcommand{\qedsymbolsave}{\qedsymbol}\renewcommand{\qedsymbol}{\defendsymbol}
	\popQED{\qed}\renewcommand{\qedsymbol}{\qedsymbolsave}\end{rem}}
\newcommand{\reref}[1]{Remark \ref{RE#1}}
\newenvironment{example}[2][]{\begin{exa}[#1]\label{EX#2}}{
	\renewcommand{\qedsymbolsave}{\qedsymbol}\renewcommand{\qedsymbol}{\defendsymbol}
	\popQED{\qed}\renewcommand{\qedsymbol}{\qedsymbolsave}\end{exa}}
\newenvironment{proofof}[1]{\begin{proof}[\textit{Proof (of #1)}]}{\end{proof}}
\newcommand{\bibi}[5]{\bibitem[#5]{#1} \textsc{#2}:\ \textit{#3,}\ {#4.}}
\begin{document}

{\Large\bf
\begin{flushleft}
	Majorization in de~Branges spaces II. Banach spaces generated by majorants
\end{flushleft}
}
\vspace*{3mm}
\begin{center}
	{\sc Anton Baranov, Harald Woracek}
\end{center}

\begin{abstract}
	\noindent
	This is the second part in a series dealing with subspaces of de~Branges spaces 
	of entire function generated 
	by majorization on subsets of the closed upper half-plane. In this part we 
	investigate certain Banach spaces generated by admissible majorants. We study their interplay with 
	the original de~Branges space structure, and their geometry. In particular, we will show that, 
	generically, they will be nonreflexive and nonseparable. 
\end{abstract}
\begin{flushleft}
   {\small
   {\bf AMS Classification Numbers:} 46E15, 46B26, 46E22\\
   {\bf Keywords:} de~Branges subspace, majorant, Banach space
   }
\end{flushleft}



%
%
\section{Introduction}

A de~Branges space $\mc H$ is a Hilbert space whose elements are entire functions, and which 
has the following properties: 
\begin{axioms}{14mm}
\item[dB1] For each $w\in\bb C$ the point evaluation $F\mapsto F(w)$ is a continuous linear functional on $\mc H$. 
\item[dB2] If $F\in\mc H$, also $F^\#(z):=\overline{F(\bar z)}$ belongs to $\mc H$ and $\|F^\#\|=\|F\|$.
\item[dB3] If $w\in\bb C\setminus\bb R$ and $F\in\mc H$, $F(w)=0$, then 
	\[
		\frac{z-\bar w}{z-w}F(z)\in\mc H\quad \text{ and }\quad \Big\|\frac{z-\bar w}{z-w}F(z)\Big\|
		=\big\|F\big\|
		\,.
	\]
\end{axioms}
Alternatively, de~Branges spaces can be defined via Hermite--Biehler functions. These are entire functions $E$ 
which satisfy: 
\begin{axioms}{14mm}
\item[HB] For all $z$ in the open upper half-plane $\bb C^+$, we have 
$|E(\qu z)| < |E(z)|$. 
\end{axioms}
For a Hermite--Biehler function $E$ define 
\[
	\mc H(E):=\Big\{F\text{ entire}:\,\frac FE,\frac{F^\#}E\in H^2(\bb C^+)\Big\}
	\,,
\]
\[
	\|F\|_{\mc H(E)}:=\Big(\int_{\bb R}\Big|\frac{F(t)}{E(t)}\Big|^2\,dt\Big)^{\frac 12},\quad F\in\mc H(E)
	\,,
\]
where $H^2(\bb C)$ denotes the Hardy space in the upper half-plane. 
Then $\mc H(E)$ is a de~Branges space. Conversely, every de~Branges space can be obtained in this way, cf.\ 
\cite{debranges:1968}. 

In the theory of de~Branges spaces, an outstandingly important role is played by their de~Branges subspaces 
(dB-subspaces, for short). These are those subspaces $\mc L$ of a de~Branges space $\mc H$ 
which are themselves de~Branges spaces with the norm inherited from $\mc H$. 

In \cite{baranov.woracek:spmaj} we have investigated a general procedure to construct dB-subspaces of a given 
de~Branges space $\mc H$ by means of majorization. For a function $\mf m:D\to[0,\infty)$, defined on some subset 
$D$ of the closed upper half-plane $\bb C^+\cup\bb R$, we have defined 
\[
	R_{\mf m}(\mc H):=\big\{\,F\in\mc H:\ \exists\,C>0:|F(z)|,|F^\#(z)|\leq C\mf m(z),
	\ z\in D\,\big\}
	\,,
\]
and 
\[
	\mc R_{\mf m}(\mc H):=\clos_{\mc H} R_{\mf m}(\mc H)
	\,.
\]
Provided $R_{\mf m}(\mc H)\neq\{0\}$ and $\mf m$ satisifies a mild regularity condition, 
the space $\mc R_{\mf m}(\mc H)$ is a dB-subspace of $\mc H$, cf.\ 
\cite[Theorem 3.1]{baranov.woracek:spmaj}. 
In this case we say that $\mf m$ is an admissible majorant for $\mc H$; 
the set of all admissible majorants is denoted by $\Adm\mc H$
(see \cite[Definition 3.3]{baranov.woracek:spmaj}). The main task in \cite{baranov.woracek:spmaj} was to investigate which 
dB-subspaces of $\mc H$ can be represented as $\mc R_{\mf m}(\mc H)$. Results, of course, depend on the set $D$ where 
majorization is permitted. We showed that every dB-subspace $\mc L$ of $\mc H$ is of the form 
$\mc R_{\mf m}(\mc H)$ when $D$ is sufficiently large, and obtained a number of results on representability 
by specific majorants defined on specific (smaller) subsets $D$.

Starting point for the present paper is the following notice: 
Those elements of a space $\mc R_{\mf m}(\mc H)$ about which one has explicit information, are the elements of 
$R_{\mf m}(\mc H)$. Hence, a closer investigation of $R_{\mf m}(\mc H)$, rather than just of $\mc R_{\mf m}(\mc H)$, 
is desirable. 

On the space $R_{\mf m}(\mc H)$ a stronger norm than $\|.\|_{\mc H}$ can defined in a natural way, namely as 
\begin{multline*}
	\|F\|_{\mf m}:=\max\big\{\|F\|_{\mc H},\min\{C\geq 0:\,|F(z)|,|F^\#(z)|\leq C\mf m(z),z\in D\}\big\},\\
	F\in R_{\mf m}(\mc H)\,.
\end{multline*}
It is seen with a routine argument that $R_{\mf m}(\mc H)$, if endowed with the norm $\|.\|_{\mf m}$, becomes a Banach space. 
Although quite simple, this fact has interesting consequences and gives rise to some intriguing geometric problems. 
The reason which makes the structure of $\langle R_{\mf m}(\mc H),\|.\|_{\mf m}\rangle$ involved might be explained as follows: 
On the one hand $R_{\mf m}(\mc H)$ is fairly small as a set; it is a subset of the 
Hilbert space $\mc H$. On the other, up to some extent, the norm $\|.\|_{\mf m}$ behaves badly; it involves an $L^\infty$-component. 

Let us describe the results and organization of the present paper. In Section 2, after providing some basics, 
we present two instances of the interaction between the de~Branges space structure of $\mc H$ and the Banach space 
structure of $R_{\mf m}(\mc H)$. Namely, we show that the maximal rate of exponential growth of functions in 
$\mc R_{\mf m}(\mc H)$ is already attained within $R_{\mf m}(\mc H)$, and that reflexivity of 
$\langle R_{\mf m}(\mc H),\|.\|_{\mf m}\rangle$ 
implies its separability, cf.\ \prref{R2} and \prref{R3}. The proofs of these results 
are not difficult, but nicely illustrate 
the interplay of $\|.\|_{\mc H}$ and $\|.\|_{\mf m}$. 

Section 3 is the most involved part of the present paper. There we discuss the geometry of the 
Banach space $\langle R_{\mf m}(\mc H),\|.\|_{\mf m}\rangle$ for a particular majorant and two 
particular domains of majorization. Namely, if $\mc L=\mc H(E_1)$ is a dB-subspace of $\mc H$, 
we consider the majorant $\mf m_{E_1}|_D$ where 
\[
	\mf m_{E_1}(z):=\frac{|E_1(z)|}{|z+i|}\quad\text{ and }\quad D:=i[1,\infty)\text{ or }D:=\bb R
	\,.
\]
This majorant already has been used and investigated intensively in \cite{baranov.woracek:spmaj}. Although it is 
probably one of the simplest majorants one can think of, it is already quite hard to obtain 
knowledge on $R_{\mf m}(\mc H)$. It turns out that the geometric structure of $R_{\mf m_{E_1}|_D}(\mc H)$ varies 
from very simple to extremely complicated, and is closely related to the 
distribution of zeros of $E_1$ or, more generally, the behaviour of the inner function $E_1^{-1}E_1^\#$. 
In case $D=i[1\infty)$, roughly speaking, the zeros of $E$ which are close to $\bb R$ 
give "simple" parts of the space, whereas zeros separated from the real axis give "complicated" parts of the space, cf.\ 
\thref{R17} and \thref{R7}. A good illustration of this idea is also \coref{R16}. The case $D=\bb R$ is different; 
it turns out that the geometric structure of $R_{\mf m}(\mc H)$ will always be complicated, cf.\ \thref{R21}. 

In the last section of the paper we revisit the question of representability of dB-subspaces by means of majorization, taking 
up the refined viewpoint of the space $R_{\mf m}(\mc H)$. It is a consequence of the Banach space structure of 
$R_{\mf m}(\mc H)$ that, for each given majorant $\mf m$, 
there exists a (in some sense) smallest one $\mf m^\flat$ among all the majorants $\mf m_1$ with 
$R_{\mf m_1}(\mc H)=R_{\mf m}(\mc H)$. This majorant is fairly smooth and reflects many properties of $\mc R_{\mf m}(\mc H)$, cf.\ 
\prref{R37}. Moreover, the notion of $\mf m^\flat$ can be used to characterize minimal elements in the set of all admissible 
majorants; a topic studied for majorization along $\bb R$ e.g.\ in \cite{baranov.havin:2006} or 
\cite{havin.mashregi:2003a}. It turns out that minimal majorants correspond 
to one-dimensional dB-subspaces representable as $\mc R_{\mf m}(\mc H)$. In conjunction with our previous results on representability, 
this fact yields criteria for existence of minimal majorants, cf.\ \coref{R41}. 

The notation in the present paper will follow the notation 
introduced in \cite{baranov.woracek:spmaj}. Also, we will 
use without further notice the basics of the theory of 
de~Branges spaces as compiled in the preliminaries section of 
that paper. Moreover, since in the present context this is no loss in generality, we will assume that all 
de~Branges spaces $\mc H$ have the following property: Whenever $x\in\bb R$, there exists an element $F\in\mc H$ with 
$F(x)\neq 0$. Also, bounded sets $D$ give only trivial results, hence we will throughout this paper exclude 
bounded sets from our discussion.

\section{The Banach space $R_{\mf m}(\mc H)$}

The following simple observation is the basis of all considerations 
made in this paper. For this reason we provide an explicit proof. 
Let the set $\Adm\mc H$ be defined as 
in \cite[Definition 3.3]{baranov.woracek:spmaj}. 

\begin{proposition}{R1}
	Let $\mc H$ be a de~Branges space and $\mf m\in\Adm\mc H$. 
	Then $\langle R_{\mf m}(\mc H),\|.\|_{\mf m}\rangle$ is a Banach space. 
\end{proposition}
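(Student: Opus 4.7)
The plan is to verify in turn the two assertions hidden in the statement: that $\|\cdot\|_{\mf m}$ actually is a norm on $R_{\mf m}(\mc H)$, and that the resulting normed space is complete.

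For the norm axioms, write $q(F):=\min\{C\geq 0:|F(z)|,|F^\#(z)|\leq C\mf m(z),\ z\in D\}$, so that $\|F\|_{\mf m}=\max\{\|F\|_{\mc H},q(F)\}$. The quantity $q$ is well-defined on $R_{\mf m}(\mc H)$ by the very definition of the latter, the infimum being attained since $\mf m\geq 0$ and the inequalities are closed. A direct check gives $q(\lambda F)=|\lambda|q(F)$ and $q(F+G)\leq q(F)+q(G)$, so $q$ is a seminorm, which may however vanish on nonzero elements vanishing on $D$. Taking the maximum with $\|\cdot\|_{\mc H}$ then yields a norm, since the maximum of a norm and a seminorm is a norm, and the maximum of two subadditive, positively homogeneous functions is again subadditive and positively homogeneous.

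For completeness, let $(F_n)_{n\in\bb N}$ be a Cauchy sequence with respect to $\|\cdot\|_{\mf m}$. It is then a fortiori Cauchy in $\|\cdot\|_{\mc H}$, so there exists $F\in\mc H$ with $\|F_n-F\|_{\mc H}\to 0$. By axiom (dB1) every point evaluation is continuous on $\mc H$, and since (dB2) is respected, also $F_n^\#(z)\to F^\#(z)$ for every $z\in\bb C$. Given $\varepsilon>0$, choose $N$ so that $q(F_n-F_m)\leq\varepsilon$ for $n,m\geq N$; that is,
\[
   |F_n(z)-F_m(z)|\leq\varepsilon\,\mf m(z),\qquad |F_n^\#(z)-F_m^\#(z)|\leq\varepsilon\,\mf m(z),\quad z\in D.
\]
Letting $m\to\infty$ and using the pointwise convergence, we obtain
\[
   |F_n(z)-F(z)|\leq\varepsilon\,\mf m(z),\qquad |F_n^\#(z)-F^\#(z)|\leq\varepsilon\,\mf m(z),\quad z\in D.
\]
This shows $F_n-F\in R_{\mf m}(\mc H)$, and since $R_{\mf m}(\mc H)$ is a linear space containing $F_n$, also $F\in R_{\mf m}(\mc H)$. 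The displayed inequalities together with $\|F_n-F\|_{\mc H}\to 0$ give $\|F_n-F\|_{\mf m}\leq\varepsilon$ for all sufficiently large $n$, establishing convergence in $\|\cdot\|_{\mf m}$.

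No serious obstacle is expected; the only subtlety worth spelling out is the passage from a Cauchy condition in the $L^\infty$-type part $q$ to an actual majorization of the limit, which is secured by pointwise convergence obtained through continuity of point evaluations together with the fact that the estimates $|G(z)|\leq C\mf m(z)$ on $D$ are preserved under pointwise limits.
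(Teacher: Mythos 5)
Your proof is correct and takes essentially the same approach as the paper: pass from $\|\cdot\|_{\mf m}$-Cauchy to $\|\cdot\|_{\mc H}$-Cauchy, obtain a limit $F\in\mc H$, and use pointwise convergence to carry the majorization estimates over to the limit. The only minor additions are that you explicitly check the norm axioms (which the paper leaves implicit) and that you obtain membership $F\in R_{\mf m}(\mc H)$ as a by-product of the $\varepsilon$-estimate rather than via the uniform bound $C:=\sup_n\|F_n\|_{\mf m}$; both routes are equally valid.
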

\begin{proof}
	Let $(F_n)_{n\in\bb N}$, $F_n\in R_{\mf m}(\mc H)$, be a Cauchy sequence with respect to the norm $\|.\|_{\mf m}$. Since 
	$\|.\|_{\mf m}\geq\|.\|_{\mc H}$, it is thus also a Cauchy sequence in $\mc H$. By completeness of 
	$\mc H$, $(F_n)_{n\in\bb N}$ converges with respect to the norm $\|.\|_{\mc H}$, say $F:=\lim_{n\to\infty}F_n\in\mc H$. 
	Set $C:=\sup_{n\in\bb N}\|F_n\|_{\mf m}<\infty$. Since convergence in $\mc H$ implies pointwise convergence, we have 
	\[
		|F(z)|=\lim_{n\to\infty}|F_n(z)|\leq C\mf m(z),\quad z\in D
		\,.
	\]
	Similarly, it follows that $|F^\#(z)|\leq C\mf m(z)$, $z\in D$. Hence $F\in R_{\mf m}(\mc H)$. 
	
	Let $\epsilon>0$ be given, and choose $N\in\bb N$ with $\|F_n-F_m\|_{\mf m}\leq\epsilon$, $n,m\geq N$. Then we have 
	\[
		|F_n(z)-F_m(z)|\leq\epsilon\mf m(z),\quad z\in D,\ n,m\geq N
		\,.
	\]
	Passing to the limit $m\to\infty$, it follows that $|F_n(z)-F(z)|\leq\epsilon\mf m(z)$, $z\in D$, $n\geq N$. 
	Together with convergence in $\mc H$, this implies that $\lim_{n\to\infty}F_n=F$ with respect to the norm $\|.\|_{\mf m}$. 
\end{proof}

As a first step towards getting acquainted with this Banach space, let us discuss its dual. 

\begin{remark}{R4}
	Let $\mf m:D\to[0,\infty)$ belong to $\Adm\mc H$. Assume that 
	$D$ is a closed subset of $\bb C^+\cup\bb R$, and that 
	\begin{equation}\label{R5}
		\forall\,w\in D:\ (z-w)^{-\mf d_{\mf m}(w)}\mf m(z)\ 
		\text{ is continuous at }\, w
		\,.
	\end{equation}
	For the definition of the zero-divisor $\mf d_{\mf m}$ see 
        \cite[Definition 2.1]{baranov.woracek:spmaj}. As it will become clear 
        later, assuming \eqref{R5} is no essential restriction. 
	Denote by $C(D)$ the Banach space of all continuous 
	and bounded functions on $D$ endowed with the 
	supremum norm $\|.\|_{\infty}$. Then the map 
	\[
		\mc J: \left\{
		\begin{array}{rcl}
			\langle R_{\mf m}(\mc H),\|.\|_{\mf m}\rangle & \to & \langle\mc H\times C(D),
				\max\{\|.\|_{\mc H},\|.\|_\infty\}\rangle\\
			F & \mapsto & \big(F,\frac F{\mf m}\big)
		\end{array}
		\right.
	\]
	is an isometric isomorphism of $R_{\mf m}(\mc H)$ onto a closed subspace of $\mc H\times C(D)$. 
	Note that the condition \eqref{R5} ensures that $\mf m^{-1}F$ 
        is a continuous function whenever $\mf d_F\geq\mf d_{\mf m}$. Since $D$ 
	is locally compact, the dual space $C(D)'$ is isomorphic to the space $\rba(D)$ of all regular bounded 
	finitely additive set functions defined on the $\sigma$-algebra of Borel sets on $D$. It follows that 
	\[
		R_{\mf m}(\mc H)'\cong \big(\mc H\times \rba(D)\big)\big/\raisebox{-2pt}{$N$}
		\,,
	\]
	where $N:=\mc J(R_{\mf m}(\mc H))^\perp$. In fact, every continuous linear functional on $R_{\mf m}(\mc H)$ can 
	be written in the form 
	\[
		F\mapsto (F,G)_{\mc H}+\int_DF\,d\mu,\qquad F\in R_{\mf m}(\mc H)
		\,,
	\]
	with some $G\in\mc H$ and $\mu\in\rba(D)$. We also see that the annihilator $N$ is given as 
	\[
		N=\Big\{(G,\mu)\in\mc H\times\rba(D):\,\int_D\frac F{\mf m}\,d\mu=(F,-G)_{\mc H},
		F\in\mc R_{\mf m}(\mc H)\Big\}
		\,.
	\]
\end{remark}

%
%
\begin{flushleft}
	\textbf{Interaction between $\|.\|_{\mf m}$ and $\|.\|_{\mc H}$.}
\end{flushleft}
\vspace*{-2mm}
The interplay between the Banach space structure of $R_{\mf m}(\mc H)$ and the de~Branges space structure of 
$\mc H$ leads to interesting insight. We give two results of this kind. 
First we show that the maximal rate of exponential growth transfers from $R_{\mf m}(\mc H)$ to $\mc R_{\mf m}(\mc H)$. 
For $\alpha\leq 0$, set $\mc H_{(\alpha)}:=\{F\in\mc H:\,\mt_{\mc H}F,\,
\mt_{\mc H}F^\#\leq\alpha\}$. Then $\mc H_{(\alpha)}$ is a closed 
subspace of $\mc H$, cf.\ \cite[Corollary 5.2]{kaltenbaeck.woracek:growth}. Hence we have 
\begin{equation}\label{R43}
	\mt_{\mc H}\mc R_{\mf m}(\mc H)=\sup_{F\in R_{\mf m}(\mc H)}\mt_{\mc H}F
	\,,
\end{equation}
for the definition of $\mt_{\mc H}$ see \cite[Definition 2.1]{baranov.woracek:spmaj}. 
\begin{proposition}{R2}
	Let $\mc H$ be a de~Branges space, and let $\mf m\in\Adm\mc H$. 
	Then there exists a function $F\in R_{\mf m}(\mc H)$, such that 
	\[
		\mt_{\mc H}\mc R_{\mf m}(\mc H)=\mt_{\mc H}F
		\,.
	\]
\end{proposition}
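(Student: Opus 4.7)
The plan is to argue by contradiction using the Baire category theorem in the Banach space $\langle R_{\mf m}(\mc H),\|.\|_{\mf m}\rangle$ furnished by \prref{R1}. Set $\alpha:=\mt_{\mc H}\mc R_{\mf m}(\mc H)$, which by \eqref{R43} equals $\sup_{G\in R_{\mf m}(\mc H)}\mt_{\mc H}G$, and assume for contradiction that no $F\in R_{\mf m}(\mc H)$ attains this supremum. The degenerate case $\alpha=-\infty$ can be dismissed at the outset: admissibility forces $R_{\mf m}(\mc H)\neq\{0\}$, and any nonzero element of $\mc H$ has finite mean type, so $\alpha\in\bb R$.

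For each $n\in\bb N$ I would introduce
\[
   V_n:=R_{\mf m}(\mc H)\cap\mc H_{(\alpha-1/n)}.
\]
By \cite[Corollary 5.2]{kaltenbaeck.woracek:growth} the space $\mc H_{(\alpha-1/n)}$ is closed in $\mc H$, and the inequality $\|.\|_{\mf m}\geq\|.\|_{\mc H}$ makes the inclusion $\langle R_{\mf m}(\mc H),\|.\|_{\mf m}\rangle\hookrightarrow\mc H$ continuous; hence each $V_n$ is a closed linear subspace of $\langle R_{\mf m}(\mc H),\|.\|_{\mf m}\rangle$.

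I would then show that $R_{\mf m}(\mc H)=\bigcup_{n\in\bb N}V_n$. Given $F\in R_{\mf m}(\mc H)$, the symmetry built into the definition of $R_{\mf m}(\mc H)$ yields $F^\#\in R_{\mf m}(\mc H)$. The hypothesis that $\alpha$ is not attained then forces both $\mt_{\mc H}F<\alpha$ and $\mt_{\mc H}F^\#<\alpha$; indeed, were $\mt_{\mc H}F^\#=\alpha$, the element $G:=F^\#\in R_{\mf m}(\mc H)$ would realise the supremum. Consequently $F\in V_n$ for all sufficiently large $n$. Baire's theorem in the complete space $\langle R_{\mf m}(\mc H),\|.\|_{\mf m}\rangle$ now produces $n_0$ such that $V_{n_0}$ has nonempty interior, and since $V_{n_0}$ is a linear subspace it must coincide with $R_{\mf m}(\mc H)$. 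Thus $\mt_{\mc H}G\leq\alpha-1/n_0$ for every $G\in R_{\mf m}(\mc H)$, contradicting \eqref{R43}.

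I do not expect a serious obstacle: the two points needing care are the norm-closedness of the $V_n$ (which reduces to $\|.\|_{\mf m}\geq\|.\|_{\mc H}$ plus closedness of $\mc H_{(\beta)}$ in $\mc H$) and the bookkeeping with the involution $F\leftrightarrow F^\#$, since $\mc H_{(\alpha)}$ is defined via both $\mt_{\mc H}F$ and $\mt_{\mc H}F^\#$ while \eqref{R43} only involves $\mt_{\mc H}F$.
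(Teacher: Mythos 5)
Your proof is correct and follows essentially the same route as the paper's: closedness of $R_{\mf m}(\mc H)\cap\mc H_{(\beta)}$ in $\langle R_{\mf m}(\mc H),\|.\|_{\mf m}\rangle$ plus the Baire Category Theorem. The paper phrases it directly (the union of the nowhere dense subspaces $R_{\mf m}(\mc H)\cap\mc H_{(\alpha-1/n)}$ has dense complement), whereas you argue by contradiction and spell out the $F^\#$ bookkeeping and the $\alpha=-\infty$ edge case, but these are cosmetic differences.
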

\begin{proof}
	We have $\|.\|_{\mf m}\geq\|.\|_{\mc H}$ on $R_{\mf m}(\mc H)$. 
        Hence, for each $\alpha\leq 0$, the subspace 
	$R_{\mf m}(\mc H)\cap\mc H_{(\alpha)}$ is $\|.\|_{\mf m}$-closed. 
	Consider the value $\alpha:=\sup_{F\in R_{\mf m}(\mc H)}\mt_{\mc H}F$. Then 
	\[
		R_{\mf m}(\mc H)\cap\mc H_{(\beta)}\subsetneq R_{\mf m}(\mc H),\qquad \beta<\alpha
		\,,
	\]
	and hence each of the spaces $R_{\mf m}(\mc H)\cap\mc H_{(\beta)}$, $\beta<\alpha$, is nowhere dense in
	$\langle R_{\mf m}(\mc H),\|.\|_{\mf m}\rangle$. By the Baire Category Theorem the set 
	\[
		R_{\mf m}(\mc H)\setminus\,\bigcup_{n\in\bb N} \Big(R_{\mf m}(\mc H)\cap\mc H_{(\alpha-\frac 1n)}\Big)
	\]
	is dense in $R_{\mf m}(\mc H)$. In particular, there exists $F\in R_{\mf m}(\mc H)$ with $\mt_{\mc H}F=\alpha$. 
\end{proof}

Secondly, we discuss the geometry of $\langle R_{\mf m}(\mc H),\|.\|_{\mf m}\rangle$. Several different topologies 
play a role: 
\begin{enumerate}[$(i)$]
\item The topology $\tau_{lu}$ of locally uniform convergence. 
\item The weak topology $\tau_w$ of $\langle\mc H,\|.\|_{\mc H}\rangle$. 
\item The weak topology $\tau^{\mf m}_w$ of $\langle R_{\mf m}(\mc H),\|.\|_{\mf m}\rangle$. 
\end{enumerate}
Denote by $B_{\mf m}(\mc H)$ the unit ball of the Banach space $\langle R_{\mf m}(\mc H),\|.\|_{\mf m}\rangle$. 
Explicitly, this is 
\[
	B_{\mf m}(\mc H):=\Big\{\ F\in\mc H:\ \ 
	\parbox{47mm}{$\|F\|_{\mc H}\leq 1$ and \\ $|F(z)|,|F^\#(z)|\leq\mf m(z),z\in D$}\Big\}
	\,.
\]

\begin{proposition}{R3}
	Let $\mf m\in\Adm\mc H$. Then the following hold:
	\begin{enumerate}[$(i)$]
	\item Let $B\subseteq\mc H$ be bounded with respect to the norm $\|.\|_{\mc H}$ of $\mc H$. Then $B$ is a normal 
		family of entire functions. We have $\tau_w|_B=\tau_{lu}|_B$. 
	\item The space $\langle R_{\mf m}(\mc H),\|.\|_{\mf m}\rangle$ is reflexive if and only if 
		$\tau^{\mf m}_w|_{B_{\mf m}(\mc H)}=\tau_{lu}|_{B_{\mf m}(\mc H)}$. 
	\item If $\langle R_{\mf m}(\mc H),\|.\|_{\mf m}\rangle$ is reflexive, then it is also separable. 
	\end{enumerate}
\end{proposition}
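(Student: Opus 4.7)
The plan is to prove the three assertions in order, using reproducing kernels of $\mc H$ as a bridge between the weak and locally uniform topologies.

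For $(i)$, continuity of point evaluations gives $|F(w)|\le\|k_w\|_{\mc H}\|F\|_{\mc H}$ for the reproducing kernel $k_w$ at $w$; since $w\mapsto\|k_w\|_{\mc H}$ is locally bounded, $B$ is uniformly bounded on compacta, so Montel's theorem yields that $B$ is a normal family of entire functions. Weak convergence in $\mc H$ means $(F,k_w)_{\mc H}\to(F_0,k_w)_{\mc H}$ for every $w\in\bb C$, i.e.\ pointwise convergence on $\bb C$; combined with normality this upgrades to locally uniform convergence. Conversely, locally uniform convergence together with the uniform norm bound implies $(F,k_w)_{\mc H}\to(F_0,k_w)_{\mc H}$ for each $w$, and since $\spn\{k_w:w\in\bb C\}$ is dense in $\mc H$ (if $F\perp k_w$ for all $w$ then $F\equiv 0$), this suffices for weak convergence of nets on the bounded set.

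For $(ii)$, Kakutani's theorem identifies reflexivity with $\tau^{\mf m}_w$-compactness of $B_{\mf m}(\mc H)$. Assuming reflexivity, the continuous embedding $R_{\mf m}(\mc H)\hookrightarrow\mc H$ is weak-to-weak continuous, so the identity map $(B_{\mf m}(\mc H),\tau^{\mf m}_w)\to(B_{\mf m}(\mc H),\tau_w)$ is a continuous bijection from a compact space onto a Hausdorff space, hence a homeomorphism; $(i)$ then matches $\tau_w$ with $\tau_{lu}$ on this bounded set. For the converse, I would note that the unit ball of $\mc H$ is weakly compact by reflexivity of Hilbert spaces, hence $\tau_{lu}$-compact by $(i)$, and that $B_{\mf m}(\mc H)$ is a $\tau_{lu}$-closed subset of it because each pointwise inequality $|F(z)|\le\mf m(z)$ and $|F^\#(z)|\le\mf m(z)$, $z\in D$, is preserved under pointwise limits. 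Thus $B_{\mf m}(\mc H)$ is $\tau_{lu}$-compact, and by the hypothesis also $\tau^{\mf m}_w$-compact, giving reflexivity.

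For $(iii)$, part $(ii)$ shows that under reflexivity the topology $\tau^{\mf m}_w$ on $B_{\mf m}(\mc H)$ coincides with $\tau_{lu}$, which is metrizable on the space of entire functions. Hence $B_{\mf m}(\mc H)$ is a weakly compact, weakly metrizable convex set; in particular it is $\tau^{\mf m}_w$-separable, and a countable $\tau^{\mf m}_w$-dense subset has its $\bb Q$-convex hull norm-dense in the weak closure, which coincides with the norm closure $B_{\mf m}(\mc H)$ itself. The decomposition $R_{\mf m}(\mc H)=\bigcup_{n\in\bb N}nB_{\mf m}(\mc H)$ then yields norm-separability of the whole space.

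The delicate step is the converse in $(ii)$: one must produce $\tau^{\mf m}_w$-compactness of $B_{\mf m}(\mc H)$ without yet knowing that this topology is metrizable. The trick is to translate everything to $\tau_{lu}$, where the problem reduces to a normal families argument inside the weakly compact unit ball of $\mc H$; the reproducing kernel structure supplied by $(i)$ is exactly what makes this translation legitimate.
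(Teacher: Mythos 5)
Your argument follows the same overall strategy as the paper's: reproducing-kernel bounds to establish normality, identification of $\tau_w$ with $\tau_{lu}$ on bounded sets, Kakutani's weak-compactness characterization of reflexivity, and finally metrizability of the weak topology on the unit ball to deduce separability. Two remarks on the details.

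In $(i)$, the implication ``$\tau_w$-convergence implies $\tau_{lu}$-convergence, by normality'' is only automatic for \emph{sequences} (this is the Vitali/Montel argument). To conclude that the two \emph{topologies} coincide you must handle nets, where pointwise convergence of a normal family does not immediately upgrade to locally uniform convergence without a subnet/compactness argument. The paper closes this by first observing that $\tau_w|_B$ is metrizable because $\mc H$ is a separable Hilbert space \cite[Theorem 2.6.23]{megginson:1998}, so the sequential implication already gives the inclusion $\tau_{lu}|_B\subseteq\tau_w|_B$; it then passes $\tau_w$-compactness against $\tau_{lu}$-Hausdorffness to get equality. Alternatively, one can proceed from the inclusion $\tau_w|_B\subseteq\tau_{lu}|_B$ that you do establish (LU-convergence of a bounded net tested against the dense span of kernels), combine it with $\tau_{lu}$-compactness of $B$ (Montel plus metrizability of $\tau_{lu}$ plus $\tau_{lu}$-closedness) and apply the same compact-to-Hausdorff bijection argument. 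Either way an extra sentence is needed; as written there is a gap.

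The rest is sound and essentially identical in substance. In the converse of $(ii)$ you use $\tau_{lu}$-closedness of $B_{\mf m}(\mc H)$ inside the $\tau_{lu}$-compact unit ball of $\mc H$ (the majorization inequalities persist under pointwise limits), where the paper instead invokes norm-closedness and convexity to get $\tau_w$-closedness via Mazur; these routes are equivalent once $(i)$ is in place. In $(iii)$ you give a direct proof through a countable $\tau^{\mf m}_w$-dense set and its $\bb Q$-convex hull together with Mazur's theorem, whereas the paper simply cites \cite[Theorem 2.6.23]{megginson:1998}; your version is self-contained and correct.
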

\begin{proof}
	Let $B$ be a $\|.\|_{\mc H}$-bounded subset of $\mc H$. 
	The weak closure of $B$ is again $\|.\|_{\mc H}$-bounded. Hence, for the proof of $(i)$, we may assume in addition 
	that $B$ is $\tau_w$-closed, and thus $\tau_w$-compact. 
        Let $\nabla_{\!\mc H}$ be defined as 
	\[
		\nabla_{\!\mc H}(z):=\sup\big\{|F(z)|:\,\|F\|_{\mc H}=1\big\}=
		\big(K(z,z)\big)^{1/2},
	\]
	where $K(z,.)$ is the reproducing kernel at the point $z$ 
	(see \cite[Section 2, II, III]{baranov.woracek:spmaj} for details).
	Since $|F(z)|\leq\|F\|_{\mc H}\nabla_{\!\mc H}(z)$ and $\nabla_{\!\mc H}$ is continuous, 
	the family $B$ is locally uniformly bounded, i.e.\ a normal family. 
	Since $\mc H$ is a separable Hilbert space, the restriction $\tau_w|_B$ is metrizable, cf.\ 
	\cite[Theorem 2.6.23]{megginson:1998}. Let $(F_n)_{n\in\bb N}$ be a sequence of elements of $B$ which converges 
	to $F\in B$ with respect to $\tau_w|_B$. Then it converges pointwise and, by the Vitali Theorem, thus also 
	locally uniformly. We see that $\tau_{lu}|_B\subseteq\tau_w|_B$; note here that $\tau_{lu}$ is also 
	metrizable. Since $\tau_w|_B$ is compact and 
	$\tau_{lu}|_B$ is Hausdorff, it follows that actually equality holds, cf.\ \cite[I.9.Corollary 3]{bourbaki:1966}. 
	This finishes the proof of $(i)$. 

	We come to the proof of $(ii)$. The space $\langle R_{\mf m}(\mc H),\|.\|_{\mf m}\rangle$ is reflexive, if and only if 
	its unit ball $B_{\mf m}(\mc H)$ is $\tau^{\mf m}_w$-compact, cf.\ \cite[Theorem 2.8.2]{megginson:1998}. 
	Since $\|.\|_{\mf m}\geq\|.\|_{\mc H}$, we have $\tau_w|_{R_{\mf m}(\mc H)}\subseteq\tau^{\mf m}_w$. 
	The unit ball $B_{\mf m}(\mc H)$ is a $\|.\|_{\mc H}$-bounded subset of $\mc H$. 
	It is $\|.\|_{\mc H}$-closed and convex, and hence 
	also $\tau_w$-closed. Thus $B_{\mf m}(\mc H)$ is $\tau_w$-compact. It follows that 
	$B_{\mf m}(\mc H)$ is $\tau^{\mf m}_w$-compact if and only if $\tau_w|_{B_{\mf m}(\mc H)}=\tau^{\mf m}_w|_{B_{\mf m}(\mc H)}$. 
	By the already proved item $(i)$, the desired assertion follows. 

       	For the proof of $(iii)$, assume that $\langle R_{\mf m}(\mc H),\|.\|_{\mf m}\rangle$ is reflexive. Then, by $(ii)$, the 
	topology $\tau^{\mf m}_w|_{B_{\mf m}(\mc H)}$ equals the topology of locally uniform convergence and, hence, 
	is metrizable. We conclude from \cite[Theorem 2.6.23]{megginson:1998} that $\langle R_{\mf m}(\mc H),\|.\|_{\mf m}\rangle$ 
	is separable. 
\end{proof}

\section{Geometry of $R_{\mf m_{E_1}|_D}(\mc H)$}

Let $\mc H$ be a de~Branges space and let $\mc L\in\Sub^*\mc H$, i.e.\ let $\mc L$ be a dB-subspace of $\mc H$ with 
$\mf d_{\mc L}=\mf d_{\mc H}$ ($=0$), 
cf.\ \cite[Definition 2.4]{baranov.woracek:spmaj} and the paragraph following it. 
Write $\mc L=\mc H(E_1)$, and denote by $K_1(w,z)$ 
the reproducing kernel of $\mc L$. In our previous work two particular majorants were extensively 
investigated. Namely, those obtained by restriction of the functions 
\[
	\nabla_{\!\mc L}(z):=\|K_1(z,.)\|_{\mc H}\ \text{ and }\ \mf m_{E_1}(z):=\frac{|E_1(z)|}{|z+i|}
\]
to the set $D$ under consideration. 
In \cite{baranov.woracek:dbmaj} and \cite{baranov.woracek:spmaj} we showed that: 
\begin{enumerate}[$(i)$]
\item Let $D:=i[1,\infty)$. Then 
	\begin{equation}\label{R42}
		\mc L=\mc R_{\nabla_{\!\mc L}|_D}(\mc H)=\mc R_{\mf m_{E_1}|_D}(\mc H)
		\,.
	\end{equation}
\item Let $D:=\bb R$ and assume that $\mt_{\mc H}\mc L=0$. Then $\mc L=\mc R_{\mf m_{E_1}|_D}(\mc H)$. 
\end{enumerate}
In this section we will study $\langle R_{\mf m}(\mc H),\|.\|_{\mf m}\rangle$ for these situations. 
If $\dim\mc H<\infty$, of course, all questions about the geometry of $R_{\mf m}(\mc H)$ are trivial. Hence we will, 
once and for all, exclude finite dimensional spaces $\mc H$ from our discussion.

%
%
\begin{flushleft}
	\textbf{a. Majorization on the imaginary half-line.}
\end{flushleft}
\vspace*{-2mm}
Consider $D:=i[1,\infty)$. Since $\mc L\subseteq R_{\nabla_{\!\mc L}|_D}(\mc H)$, \eqref{R42} implies that 
the space $R_{\nabla_{\!\mc L}|_D}(\mc H)$ is $\|.\|_{\mc H}$-closed. It follows that the norms 
$\|.\|_{\nabla_{\!\mc L}|_D}$ and $\|.\|_{\mc H}$ both turn $R_{\nabla_{\!\mc L}|_D}(\mc H)$ into a Banach space. However, 
$\|.\|_{\nabla_{\!\mc L}|_D}\geq\|.\|_{\mc H}$, and therefore they are equivalent. 
Being bicontinuously isomorphic to the Hilbert space $\langle\mc L,\|.\|_{\mc H}\rangle$, the geometry of the space 
$\langle R_{\nabla_{\!\mc L}|_D}(\mc H),\|.\|_{\nabla_{\!\mc L}|_D}\rangle$ is very simple. 
For example it is separable, reflexive, has an unconditional basis, etc. 

Things change, when turning to the majorant $\mf m_{E_1}|_D$. Then, as we will show below, the geometry of the space 
$\langle R_{\mf m_{E_1}|_D}(\mc H),\|.\|_{\mf m_{E_1}|_D}\rangle$ varies from very simple to highly complicated. 

For each majorant $\mf m$ we have $R_{\mf m}(\mc H)=R_{\mf m}(\mc R_{\mf m}(\mc H))$. Moreover, trivially, the norms 
$\|.\|_{\mf m}$ of $R_{\mf m}(\mc H)$ and $R_{\mf m}(\mc R_{\mf m}(\mc H))$ are equal. In particular, using \eqref{R42}, 
\[
	R_{\mf m_{E_1}|_D}(\mc H)=R_{\mf m_{E_1}|_D}(\mc L)
\]
including equality of norms. Hence, we may restrict explicit considerations to the case when $\mc L=\mc H$ (and doing so slightly 
simplifies notation). 

Our aim in this subsection is to prove the following two theorems. 
Denote by $\Gamma_\alpha$, $\alpha\in(0,\frac\pi 2)$, the Stolz angle 
\[
	\Gamma_\alpha:=\big\{z\in\bb C:\,\alpha\leq\arg z\leq\pi-\alpha\big\}
	\,.
\]

\begin{theorem}{R17}
	Let $\mc H=\mc H(E)$ be a de~Branges space, and set $\mf m:=\mf m_{E}|_{i[1,\infty)}$. 
	Moreover, denote $\Theta:=E^{-1}E^\#$, and let $(w_n)_{n\in\bb N}$ be the sequence of zeros 
	of $\Theta$ in $\bb C^+$ listed according to their multiplicities. 
	Then the following are equivalent: 
	\begin{itemize}
	\item[$(i)$] The norms $\|.\|_{\mf m}$ and $\|.\|_{\mc H}$ are equivalent on $R_{\mf m}(\mc H)$. 
	\item[$(i')$] The space $R_{\mf m}(\mc H)$ is $\|.\|_{\mc H}$-closed. 
	\item[$(i'')$] We have $R_{\mf m}(\mc H)=\mc H$. 
	\item[$(ii)$] There exists $\varphi\in\bb R$, such that 
		\[
			\sup_{z\in\Gamma_\alpha}\Big(|z|\cdot|e^{i\varphi}-\Theta(z)|\Big)<\infty,\quad 
			\alpha\in\Big(0,\frac\pi 2\Big)
			\,.
		\]
	\item[$(ii')$] There exists $\varphi\in\bb R$ and $\alpha\in\Big(0,\frac\pi 2\Big)$, such that 
		\[
			\liminf_{\substack{|z|\to\infty\\ z\in\Gamma_\alpha}}\Big(|z|\cdot|e^{i\varphi}-\Theta(z)|\Big)<\infty
			\,.
		\]
	\item[$(iii)$] We have $\mt\Theta=0$ and $\sum_{n\in\bb N}\Im w_n<\infty$. 
	\item[$(iii')$] The domain of the multiplication operator $S_{\mc H}$ in $\mc H$ is not dense. 
	\end{itemize}
\end{theorem}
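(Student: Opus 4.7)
I would organize the proof into three blocks of equivalences linked together via reproducing kernels, with $(iii')$ handled as a separate classical equivalence.

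\textbf{Banach space equivalences $(i)\Leftrightarrow(i')\Leftrightarrow(i'')$.} By \prref{R1} the norm $\|.\|_{\mf m}$ is always complete on $R_{\mf m}(\mc H)$. Under $(i'')$ both $\|.\|_{\mc H}$ and $\|.\|_{\mf m}$ are complete norms on $R_{\mf m}(\mc H)=\mc H$ and satisfy $\|.\|_{\mf m}\ge\|.\|_{\mc H}$, so the open mapping theorem gives $(i)$. The implication $(i)\Rightarrow(i')$ is immediate, and for $(i')\Rightarrow(i'')$ one uses the reduction $\mc L=\mc H$ together with~\eqref{R42} to identify $\mc R_{\mf m}(\mc H)=\mc H$ as the $\|.\|_{\mc H}$-closure of $R_{\mf m}(\mc H)$.

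\textbf{Inner function block $(ii)\Leftrightarrow(ii')\Leftrightarrow(iii)$.} Since $\Theta=E^\#/E$ is meromorphic on $\bb C$, its inner-outer factorization admits no singular inner part, so $\Theta(z)=c\,e^{iaz}\prod_k b_k(z)$ with $a=\mt\Theta$, $b_k(z)=(z-w_k)/(z-\bar w_k)$, $|c|=1$. Under $(iii)$ we have $a=0$, and the estimate $|1-b_k(z)|=2\Im w_k/|z-\bar w_k|\le 2\Im w_k/(|z|\sin\alpha)$ on $\Gamma_\alpha$, telescoped over the product, yields $|z|\cdot|c-\Theta(z)|\le 2(\sin\alpha)^{-1}\sum\Im w_k$, giving $(ii)$ with $e^{i\varphi}=c$. $(ii)\Rightarrow(ii')$ is trivial. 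For $(ii')\Rightarrow(iii)$ one starts from the witnessing sequence $z_n\in\Gamma_\alpha$: the hypothesis gives $1-|\Theta(z_n)|\le C/|z_n|$, which combined with $\log|\Theta(z)|\le -a\,\Im z$ forces $a=0$, and the identity
\[
	1-|b_k(z)|^2=\frac{4\,\Im w_k\,\Im z}{|z-\bar w_k|^2}
\]
together with $-\log(1-u)\ge u$ produces, on any finite partial sum of $-\log|\Theta(z_n)|^2$, a bound $\sin\alpha\cdot\sum_{k\le N}\Im w_k\le C'$ independent of $N$, so $\sum\Im w_k<\infty$.

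\textbf{Bridge $(i'')\Leftrightarrow(iii)$.} By closed graph (via the first block), $(i)$ amounts to the uniform estimate $|F(iy)|\le C\|F\|_{\mc H}\,|E(iy)|/(y+1)$ on $\mc H$. Sharpness of the reproducing-kernel bound $|F(iy)|\le\|F\|_{\mc H}K(iy,iy)^{1/2}$ together with the explicit formula $K(iy,iy)=|E(iy)|^2(1-|\Theta(iy)|^2)/(4\pi y)$ shows that this uniform estimate is equivalent to $1-|\Theta(iy)|^2=O(1/y)$. Expanding $-\log|\Theta(iy)|^2=2ay+\sum_k\bigl(-\log|b_k(iy)|^2\bigr)$ and repeating the finite-partial-sum argument from the previous block then extracts both $a=0$ and $\sum\Im w_k<\infty$, yielding $(iii)$; the converse is a corollary of the product estimate of block 2 applied at $z=iy$. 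Finally, $(iii)\Leftrightarrow(iii')$ is the classical de~Branges characterization: non-density of $\dom S_{\mc H}$ is equivalent to the existence of a bounded evaluation functional ``at infinity'' on the model space $K_\Theta$, which in turn is equivalent to $\mt\Theta=0$ together with $\sum\Im w_k<\infty$.

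The principal obstacle is the extraction of the Blaschke summability $\sum\Im w_k<\infty$ from a pointwise modulus bound on $\Theta$ (or from the $\liminf$ hypothesis in $(ii')$). The naive asymptotic $-\log|b_k(iy)|^2\sim 4\Im w_k/y$ for fixed $k$ as $y\to\infty$ is not uniform in $k$, so one must work with finite partial sums, choose the evaluation point large relative to $N$, and only then let $N\to\infty$ after securing a uniform bound. A secondary technical point is the care needed in handling the $F^\#$-component alongside $F$ in the closed-graph argument for the bridge, since $R_{\mf m}(\mc H)$ is defined via a simultaneous majorization of $F$ and $F^\#$.
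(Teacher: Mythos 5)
Your proposal is correct and covers all the equivalences, but it takes a genuinely different route through the inner-function conditions. The paper runs the cycle $(iii')\Rightarrow(ii)\Rightarrow(ii')\Rightarrow(iii')$ by exploiting de~Branges' structural theory: the Herglotz representation of $-A/B$ and the asymptotics of the linear term $pz$ from \cite[Theorems 22, 29]{debranges:1968}, and it handles $(iii)\Leftrightarrow(iii')$ and $(i)\Leftrightarrow(iii)$ by citing \leref{R18} (from \cite{baranov:2001}) after the reproducing-kernel reduction \eqref{R19}--\eqref{R20}. You instead prove $(iii)\Leftrightarrow(ii)\Leftrightarrow(ii')$ directly on the Blaschke product: the telescoping bound $|1-\prod_kb_k(z)|\le\sum_k|1-b_k(z)|\le 2(|z|\sin\alpha)^{-1}\sum_k\Im w_k$ on $\Gamma_\alpha$ gives $(iii)\Rightarrow(ii)$, and a finite-partial-sum argument applied to $-\log|\Theta(z_n)|^2\ge\sum_k(1-|b_k(z_n)|^2)=\sum_k 4\Im w_k\Im z_n/|z_n-\overline{w_k}|^2$ (choose $|z_n|$ large relative to $N$, then let $N\to\infty$) gives $(ii')\Rightarrow(iii)$; $(iii)\Leftrightarrow(iii')$ is left as a citation. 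Your bridge $(i'')\Leftrightarrow(iii)$ is essentially the paper's reduction to $1-|\Theta(iy)|^2=O(1/y)$ via $K(iy,iy)=|E(iy)|^2(1-|\Theta(iy)|^2)/(4\pi y)$, except you again unwind \leref{R18} into the same partial-sum argument rather than invoking it. The trade-off: your route is more elementary and self-contained, avoiding de~Branges' Theorems 22 and 29 entirely, at the cost of carrying out the sum-extraction estimate twice; the paper's cycle through $(iii')$ is slicker because it lets the multiplication-operator criterion do the work once. You also correctly flag, and correctly resolve, the nonuniformity in $k$ of the asymptotic $-\log|b_k(iy)|^2\sim 4\Im w_k/y$ — that is exactly where a careless version of the argument would fail. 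The only cosmetic point: the handling of $F^\#$ in the closed-graph step is automatic (apply the bound to $F^\#\in\mc H$, $\|F^\#\|_{\mc H}=\|F\|_{\mc H}$) and does not actually require extra care.
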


\begin{theorem}{R7}
	Let $\mc H=\mc H(E)$ be a de~Branges space, and set $\mf m:=\mf m_{E}|_{i[1,\infty)}$. 
	Moreover, denote $\Theta:=E^{-1}E^\#$, and let $(w_n)_{n\in\bb N}$ be the sequence of zeros 
	of $\Theta$ in $\bb C^+$ listed according to their multiplicities. 
	Then 
	\[
		(i)\,\Longrightarrow\,(ii)\,\Longrightarrow\,(iii),
	\]
	where $(i)$, $(ii)$, and $(iii)$, are the following conditions: 
	\begin{enumerate}[$(i)$]
	\item We have $\mt\Theta<0$ or $\limsup_{n\to\infty}\Im w_n>0$. 
	\item There exists $\delta>0$, such that 
		\begin{equation}\label{R8}
			\liminf_{\substack{|z|\to\infty\\ \Im z\geq\delta}}|\Theta(z)|=0
			\,.
		\end{equation}
	\item There exists a bicontinuous embedding of $\langle\ell^\infty,\|.\|_\infty\rangle$ into 
		$\langle R_{\mf m}(\mc H),\|.\|_{\mf m}\rangle$. In particular, $R_{\mf m}(\mc H)$ is 
		neither separable nor reflexive. 
	\end{enumerate}
\end{theorem}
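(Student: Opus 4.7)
The plan is to treat the two implications separately. For $(i)\Rightarrow(ii)$, I would first invoke Krein's inner factorization of $\Theta$ when $\mt\Theta<0$: writing $\Theta(z)=e^{iaz}B(z)S(z)$ with $a=-\mt\Theta>0$ and $|B|,|S|\leq 1$ in $\bb C^+$, we obtain $|\Theta(iy)|\leq e^{-ay}\to 0$, so (ii) holds with $z_k:=iy_k\uparrow\infty$ and any $\delta>0$. In the other case, $\limsup_n\Im w_n>0$, I would extract a subsequence $(w_{n_k})$ with $\Im w_{n_k}\geq\delta$; since $\Theta$ is analytic and bounded in $\bb C^+$ its zeros cannot accumulate inside, forcing $|w_{n_k}|\to\infty$, and $\Theta(w_{n_k})=0$ then yields (ii) with this $\delta$.

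For the substantive implication $(ii)\Rightarrow(iii)$, my plan is to produce a sequence $(F_k)\subseteq R_{\mf m}(\mc H)$ equivalent to the canonical basis of $\ell^\infty$. I would first thin the sequence from (ii) to $(z_k)$ with $\Im z_k\geq\delta$, with $|z_k|$ growing as rapidly as desired, and with $|\Theta(z_k)|$ decreasing as rapidly as desired, and take $F_k$ to be a rescaled reproducing kernel,
\[
F_k(z):=\frac{K(z_k,z)}{\overline{E(z_k)}}=\frac{E(z)\bigl(1-\overline{\Theta(z_k)}\Theta(z)\bigr)}{2\pi i(\bar z_k-z)}.
\]
To obtain a uniform bound $\|F_k\|_{\mf m}\leq C$ I would combine three estimates: the Hilbert-space bound $\|F_k\|_{\mc H}^2=\pi(1-|\Theta(z_k)|^2)/\Im z_k\leq\pi/\delta$; the majorant estimate $|F_k(iy)|\leq C_1\mf m(iy)$ coming from $|iy-\bar z_k|\geq y+\delta\geq\delta(y+1)$ for $y\geq 1$; and the companion estimate $|F_k^\#(iy)|\leq C_2\mf m(iy)$, which rests on the Schwarz--Pick inequality for the self-map $\Theta:\bb C^+\to\overline{\bb D}$ in the form $|\Theta(iy)-\Theta(z_k)|/|iy-z_k|\leq 2/|iy-\bar z_k|$. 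For the embedding itself I would use the point evaluations $\phi_k(F):=F(z_k)$, which are continuous on $R_{\mf m}(\mc H)$ since $z_k\in\bb C^+$; a direct calculation gives $F_k(z_k)=E(z_k)(1-|\Theta(z_k)|^2)/(4\pi\Im z_k)\neq 0$ together with
\[
\Bigl|\frac{F_j(z_k)}{F_k(z_k)}\Bigr|\leq\frac{4\Im z_k}{(1-|\Theta(z_k)|^2)\,|\bar z_j-z_k|},
\]
and by the sparsity $|\bar z_j-z_k|\gtrsim\max(|z_j|,|z_k|)$ together with $|\Theta(z_k)|\to0$ these off-diagonal ratios can be made summably small, producing biorthogonality up to an arbitrarily small perturbation. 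Once $(F_k)$ is $\|\cdot\|_{\mf m}$-bounded and biorthogonal, and once the partial sums $\sup_N\|\sum_{k\leq N}a_kF_k\|_{\mf m}$ are uniformly controlled by $\|a\|_\infty$ for every $(a_k)\in\ell^\infty$, standard bookkeeping yields a bicontinuous embedding of $\ell^\infty$ into $R_{\mf m}(\mc H)$; nonseparability and nonreflexivity follow at once because $\ell^\infty$ has neither property.

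The hard part will be controlling the partial sums uniformly in $\ell^\infty$. A subtle feature is that each individual quotient $|F_k(iy)|/\mf m(iy)$ does not tend to zero as $y\to\infty$, so a naive termwise absolute bound is insufficient; the argument must exploit simultaneously the rapid growth of $|z_k|$ and the rapid decay of $|\Theta(z_k)|$, possibly after replacing the raw reproducing kernels $F_k$ by a variant with a built-in $1/(z+i)$-type decay factor, so as to ensure that on each ``dyadic'' region of the imaginary axis only one $F_k$ contributes nontrivially. This is the essential use of hypothesis (ii) beyond merely producing a single nontrivial element of $R_{\mf m}(\mc H)$, and it is where the real distinction between (ii) and (iii) is forged.
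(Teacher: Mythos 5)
Your $(i)\Rightarrow(ii)$ argument is fine and agrees with the paper (which just labels it as clear): inner factorization for $\mt\Theta<0$, and an unbounded subsequence of zeros with $\Im w_{n_k}\geq\delta$ in the other case.

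The $(ii)\Rightarrow(iii)$ part has a genuine gap, and it is precisely in the place you flag as ``the hard part.'' You propose to embed $\ell^\infty$ via $a\mapsto\sum_k a_k F_k$ where the $F_k$ are rescaled reproducing kernels that form (after thinning) a Riesz sequence. But a Riesz sequence is bicontinuously equivalent to an orthonormal sequence, so $\sum_k a_k F_k$ converges in $\mc H$ \emph{only when} $(a_k)\in\ell^2$. For a general $(a_k)\in\ell^\infty$ the series diverges in $\mc H$, hence has no chance to land in $R_{\mf m}(\mc H)\subseteq\mc H$. Your suggested repair (multiplying by a $1/(z+i)$-type factor, or arranging the supports on dyadic regions) does not fix this, because the $F_k$ would still have $\|F_k\|_{\mc H}$ bounded below, and the $\mc H$-convergence obstruction is unchanged. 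Likewise, $\ell^\infty$ has no Schauder basis, so a biorthogonal-system argument of the type you sketch cannot by itself produce the embedding.

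The missing device is the paper's Lemma 3.5 (the map $\Lambda$): with $n_l:=l^3$, it sends $d\in\ell^\infty$ to the ``difference-quotient'' sequence $\Lambda(d)$, which lies simultaneously in $\ell^2$ (so that $\rho(\mu\Lambda(d))=\sum\Lambda(d)_n\,\mu_n\tilde K(v_n,\cdot)$ converges in $\mc H$) and has partial sums $\sum_{k\leq n}\Lambda(d)_k$ uniformly bounded by $\|d\|_\infty$ with exact recovery $\sum_{k\leq n_l}\Lambda(d)_k=d_l$. The embedding is then $\Psi=\rho\circ\mu\circ\Lambda$, not $\rho$ applied directly to the $\ell^\infty$ data. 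The upper majorant estimate in Step 3 uses exactly the three bounds $\|\Lambda(d)\|_2,\|\Lambda(d)\|_\infty,\|\Lambda(d)\|_{\mc U}\lesssim\|d\|_\infty$; and the lower estimate in Step 4 evaluates at the points $iy_k$ with $y_k:=|v_{n_k}v_{n_k+1}|^{1/2}$ and extracts $d_k$ from the partial sum. Without a transformation of this kind from $\ell^\infty$ into $\ell^2$ that still encodes the $\ell^\infty$ norm in a recoverable way, the construction does not get off the ground, so the proposal as written does not prove $(ii)\Rightarrow(iii)$.
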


\begin{remark}{R6}
	Comparing the condition $(iii)$ of \thref{R17} with $(i)$ of \thref{R7}, 
	it is apparent that these theorems do not establish a full dichotomy; 
	there is a gap between the described situations. At present it is not clear to us what happens in this gap. 
	In particular, it is an open question whether there exists a de~Branges space $\mc H=\mc H(E)$, such that 
	for $\mf m:=\mf m_E|_{i[1,\infty)}$ the space $\langle R_{\mf m}(\mc H),\|.\|_{\mf m}\rangle$ is separable (or reflexive) 
	although $R_{\mf m}(\mc H)\neq\mc H$. 
\end{remark}

We turn to the proof of \thref{R17} and \thref{R7}. 
For the first theorem we will use the following known facts, which can be found,
e.g., in \cite[Theorem 2, Corollary 2]{baranov:2001}. 

\begin{lemma}{R18}
	Let $\Theta$ be an inner function in $\bb C^+$. 
	Then the following are equivalent:
	\begin{enumerate}[$(i)$]
	\item There exists $\varphi\in\bb R$ such that $\sup_{y\geq 1}y|e^{i\varphi}-\Theta(iy)|<\infty$. 
	\item We have $\sup_{y\geq 1}y(1-|\Theta(iy)|)<\infty$. 
	\item There exists $\varphi\in\bb R$ such that $e^{i\varphi}-\Theta\in H^2$. 
	\end{enumerate}
	Assume that $\Theta$ is of the form $\Theta(z)=e^{-iaz}B(z)$ with $a\leq 0$ 
        and $B$ being a Blaschke product. 
	Denote by $(w_n)_{n\in\bb N}$ the sequence of zeros of $B$ listed according to their multiplicities, then 
	the conditions $(i)$--$(iii)$ are further equivalent to 
	\begin{enumerate}[$(i)$]
	\setcounter{enumi}{3}
	\item $a=0$ and $\sum_{n\in\bb N}\Im w_n<\infty$. 
	\end{enumerate}
	\popQED{\qed}
\end{lemma}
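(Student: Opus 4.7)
The plan is to prove the cycle $(i) \Rightarrow (ii) \Rightarrow (iii) \Rightarrow (i)$, valid for every $\bb C^+$-inner function $\Theta$, and then, under the additional Blaschke hypothesis, to add the equivalence with $(iv)$.

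The implication $(i) \Rightarrow (ii)$ is immediate from the reverse triangle inequality $1 - |\Theta(iy)| \leq |e^{i\varphi} - \Theta(iy)|$. For $(ii) \Rightarrow (iii)$, I would view $(ii)$ as the classical Julia condition for $\Theta$ at the boundary point $\infty$ along the imaginary direction; Julia's lemma then produces a unimodular nontangential limit $e^{i\varphi}$ of $\Theta$ at $\infty$ and a finite angular derivative there. To move from this to $e^{i\varphi} - \Theta \in H^2$, one uses the identity $|e^{i\varphi} - \Theta(t)|^2 = 2(1 - \Re(e^{-i\varphi}\Theta(t)))$ on $\bb R$, recognizes the right-hand side as the boundary value of a nonnegative harmonic function $u$ on $\bb C^+$ with $u(iy) = O(1/y)$, and concludes finiteness of its Herglotz boundary measure (hence integrability of the boundary values) from the Poisson--Nevanlinna representation of $u$.

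The return $(iii) \Rightarrow (i)$ is the most delicate step: a generic $f \in H^2(\bb C^+)$ only admits the pointwise estimate $|f(iy)| \leq \|f\|_{H^2}/\sqrt{\pi y}$, which gives an $O(y^{-1/2})$ rate rather than the required $O(y^{-1})$. One must exploit the special structure that $f = e^{i\varphi} - \Theta$ is bounded with $|\Theta(t)| = 1$ a.e.\ on $\bb R$. The Julia--Carath\'eodory theorem furnishes the missing ingredient: $H^2$-membership of $e^{i\varphi} - \Theta$ is equivalent to existence of a finite angular derivative of $\Theta$ at $\infty$ (an implication dual to the one used in the previous step), which is itself equivalent to $\sup_{y \geq 1} y |e^{i\varphi} - \Theta(iy)| < \infty$.

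For the Blaschke part, $(iv) \Rightarrow (ii)$ follows by direct computation: writing $w_n = u_n + i v_n$ with $v_n > 0$,
\[
    1 - \bigg|\frac{iy - w_n}{iy - \qu{w_n}}\bigg|^2 = \frac{4 y v_n}{u_n^2 + (y+v_n)^2} \leq \frac{4 v_n}{y} \qquad (y \geq 1).
\]
Combined with $a = 0$ and $\sum v_n < \infty$, this yields $-\log|\Theta(iy)|^2 = O(1/y)$ for $y$ large, giving $(ii)$. Conversely, if $(ii)$ holds, then $|\Theta(iy)| \to 1$; together with $|\Theta(iy)| \leq e^{ay}$ this forces $a = 0$, and summing the positive individual contributions above over those $n$ with $|w_n| \leq y/2$ (where each factor is bounded below by a constant times $v_n / y$) and letting $y \to \infty$ produces $\sum v_n < \infty$. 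The principal obstacle throughout is the step $(iii) \Rightarrow (i)$, which rests on the fine Hardy-space theory of angular derivatives (or equivalently the Clark/model-space correspondence) developed in \cite[Theorem 2, Corollary 2]{baranov:2001}, to which we would defer for a complete argument.
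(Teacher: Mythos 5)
The paper does not prove \leref{R18} at all; it is stated with a terminal $\qed$ and attributed to \cite[Theorem 2, Corollary 2]{baranov:2001}. There is therefore no ``paper's own proof'' to compare against, and what you supply is genuinely additional content. Your overall strategy (the cycle $(i)\Rightarrow(ii)\Rightarrow(iii)\Rightarrow(i)$ together with the Blaschke-product bookkeeping for $(iv)$) is the standard one and is sound in outline. The elementary parts are fine: $(i)\Rightarrow(ii)$ is indeed just the reverse triangle inequality, and the computation
\[
1-\Bigl|\frac{iy-w_n}{iy-\qu{w_n}}\Bigr|^2=\frac{4y\Im w_n}{(\Re w_n)^2+(y+\Im w_n)^2}\le\frac{4\Im w_n}{y}
\]
is correct. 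To pass from this to $(iv)\Rightarrow(ii)$ it is cleaner to avoid $\log$ and use $1-\prod a_n\le\sum(1-a_n)$ directly; conversely, for $(ii)\Rightarrow(iv)$ one needs $\sum(1-a_n)\le-\log\prod a_n=-\log|B(iy)|^2$, and then $-\log|B(iy)|^2\lesssim 1-|B(iy)|^2$ once $y$ is large enough that $|B(iy)|$ is bounded away from $0$ --- you gloss over this but it is easily repaired, and your localization to $|w_n|\le y/2$ then correctly yields $\sum\Im w_n<\infty$.

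Where you sell yourself short is $(iii)\Rightarrow(i)$, which you flag as ``the most delicate step'' and then outsource entirely to \cite{baranov:2001}. In fact this step yields to the same Herglotz-representation argument you already invoke for $(ii)\Rightarrow(iii)$, and deferring it leaves the cycle genuinely incomplete as written. Concretely: set $h:=1-e^{-i\varphi}\Theta$, a bounded holomorphic function on $\bb C^+$ with $\Re h>0$, so that $ih$ is Herglotz with representation $ih(z)=\alpha+\beta z+\int(\frac{1}{t-z}-\frac{t}{1+t^2})\,d\nu(t)$. Boundedness of $h$ forces $\beta=0$ and rules out any singular or infinite mass in $\nu$, since a bounded nonnegative harmonic function on $\bb C^+$ is the Poisson integral of an $L^\infty$ density; here that density is $\Re h|_{\bb R}=\tfrac12|e^{i\varphi}-\Theta|^2$, which by hypothesis $(iii)$ lies in $L^1$, so $\nu$ is a finite measure. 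Writing $h(iy)=-i\alpha'+\int\frac{y\,d\nu}{t^2+y^2}-i\int\frac{t\,d\nu}{t^2+y^2}$ with $\alpha'\in\bb R$, the condition $f(iy)\to0$ (automatic from $f\in H^2$) forces $\alpha'=0$, and then both $y\Re h(iy)\le\pi^{-1}\nu(\bb R)$ and $y|\Im h(iy)|\le\tfrac12\nu(\bb R)$ are uniformly bounded, which is precisely $(i)$. This argument is self-contained and no harder than what you already did for $(ii)\Rightarrow(iii)$; it avoids both the appeal to Julia--Carath\'eodory and the deferral to the reference, and closes the one real gap in your proposal.
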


\begin{corollary}{R9}
	Let $\mc H=\mc H(E)$ be a de~Branges space. Set $\Theta:=E^{-1}E^\#$, and denote by $(w_n)_{n\in\bb N}$, $w_n\in\bb C^+$, 
	the sequence of zeros of $\Theta$ in $\bb C^+$ listed according to their multiplicities. Then the multiplication 
	operator in $\mc H$ is not densely defined, if and only if $\mt\Theta=0$ and $\sum_{n\in\bb N}\Im w_n<\infty$. 
	\popQED{\qed}
\end{corollary}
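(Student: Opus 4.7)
The corollary combines \leref{R18} with a translation of the non-density of $\dom S_{\mc H}$ into an $H^2$-membership condition on $\Theta := E^{-1}E^\#$. Since $E$ is Hermite--Biehler, $\Theta$ is inner on $\bb C^+$ and meromorphic on $\bb C$; in particular it admits a factorisation $\Theta(z) = e^{-iaz}B(z)$ with $a = \mt\Theta \leq 0$ and $B$ a Blaschke product whose zero-sequence (counted with multiplicity) is $(w_n)_{n\in\bb N}$. Hence the conjunction $\mt\Theta = 0$ and $\sum_n \Im w_n < \infty$ is precisely clause $(iv)$ of \leref{R18}, and it is enough to establish the equivalence
\[
    \dom S_{\mc H}\text{ not dense in }\mc H \quad\Longleftrightarrow\quad \exists\,\varphi\in\bb R:\ e^{i\varphi}-\Theta\in H^2(\bb C^+),
\]
which corresponds to clause $(iii)$ of the lemma.

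For this equivalence I would invoke the classical fact (due to de~Branges) that $\dom S_{\mc H}$ has codimension at most one in $\mc H(E)$, and that when this codimension equals one the orthogonal complement is spanned by a real-entire function of the form $\alpha P + \beta Q$ with $(\alpha,\beta)\in\bb R^2\setminus\{(0,0)\}$, where $P := (E+E^\#)/2$ and $Q := i(E-E^\#)/2$ are the real-entire components of $E$ (so that $E = P - iQ$). Rewriting
\[
    \alpha P + \beta Q = \tfrac{\alpha + i\beta}{2}\,E + \tfrac{\alpha - i\beta}{2}\,E^\#
\]
and using the standard $H^2$-description of $\mc H(E)$, membership $\alpha P + \beta Q \in \mc H(E)$ unravels into $\tfrac{\alpha+i\beta}{2} + \tfrac{\alpha - i\beta}{2}\,\Theta \in H^2(\bb C^+)$. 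Parametrising $\alpha - i\beta = ce^{-i\varphi/2}$ with $c > 0$, so that $\alpha + i\beta = ce^{i\varphi/2}$, and factoring out constants, this is equivalent to $e^{i\varphi} + \Theta \in H^2(\bb C^+)$, and hence to $e^{i\psi} - \Theta \in H^2(\bb C^+)$ for $\psi := \varphi + \pi$; the converse direction is obtained by reversing the same computation and producing a non-trivial pair $(\alpha,\beta)$ from any witnessing $\psi$.

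Applying \leref{R18} then closes the argument. The only genuine obstacle is the first step, namely locating (or reproving) the codimension-one dichotomy for $\dom S_{\mc H}$ together with the explicit description of the defect vector via the real-entire components of $E$; this is where the substantive work lies, and it relies on the theory of symmetric operators in $\mc H(E)$. Everything after it is symbolic manipulation in $H^2(\bb C^+)$ and a direct invocation of the lemma.
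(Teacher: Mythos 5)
The paper gives no proof of \coref{R9}: both \leref{R18} and the corollary are stated as known facts, with a reference to [Ba1, Theorem~2, Corollary~2], and the $\popQED$ indicates the corollary is regarded as an immediate consequence of \leref{R18} together with classical de~Branges theory. Your argument supplies precisely the link that the paper suppresses, and it is correct: non-density of $\dom S_{\mc H}$ is equivalent to the existence of a nonzero real pair $(\alpha,\beta)$ with $\alpha A+\beta B\in\mc H(E)$ (where $E=A-iB$), and the $H^2$-description of $\mc H(E)$ turns this into $e^{i\varphi}-\Theta\in H^2(\bb C^+)$ for a suitable $\varphi$, which is clause $(iii)$ of \leref{R18}. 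Your algebra is right, including the reality of $\alpha A+\beta B$ which makes the $F^\#/E$ condition redundant.

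Worth noting as a point of comparison: in its proof of \thref{R17} the paper routes the very same de~Branges facts through the Herglotz representation of $-A/B$, detecting non-density of $\dom S_{\mc H}$ via the non-vanishing of the linear term $pz$ (citing de~Branges' Theorems~22 and~29), and then matches up with the boundedness of $z(1-\Theta(z))$ in a Stolz angle, i.e.\ clause $(i)/(i')/(ii)$ of \leref{R18}. Your route goes instead through clause $(iii)$ (the $H^2$-membership criterion) and the defect-vector description of $\overline{\dom S_{\mc H}}^{\perp}$. Both rest on the same underlying theory of the symmetric multiplication operator, so this is a genuine but minor difference in which facet of de~Branges' Theorem~29 one chooses to invoke; your version is the more direct path to clause $(iii)$, while the paper's Herglotz formulation is what it actually needs for the remaining implications in \thref{R17}. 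The one place where your write-up correctly flags remaining work is the codimension-one dichotomy and the explicit form of the defect vector — that is the classical input, and you are right that everything after it is bookkeeping.
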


\begin{proofof}{\thref{R17}}
	By the just stated corollary we have $(iii)\Leftrightarrow(iii')$. 

	The equivalences $(i)\Leftrightarrow(i')\Leftrightarrow(i'')$ are easy to see: 
	If $\|.\|_{\mf m}$ and $\|.\|_{\mc H}|_{R_{\mf m}(\mc H)}$ are equivalent, then $R_{\mf m}(\mc H)$ is 
	$\|.\|_{\mc H}$-complete and hence also $\|.\|_{\mc H}$-closed. Since in any case $\mc R_{\mf m}(\mc H)=\mc H$, 
	$\|.\|_{\mc H}$-closedness of $R_{\mf m}(\mc H)$ implies that $R_{\mf m}(\mc H)=\mc H$. Finally, if 
	$R_{\mf m}(\mc H)=\mc H$, then $\|.\|_{\mf m}$ and $\|.\|_{\mc H}$ both turn $R_{\mf m}(\mc H)$ into a Banach space. 
	Since $\|.\|_{\mf m}\geq\|.\|_{\mc H}$, this implies that they are equivalent. 

	Trivially, $(ii)\Rightarrow(ii')$ holds. We will finish the proof by showing that $(i)\Leftrightarrow(iii)$, 
	$(iii')\Rightarrow(ii)$, and $(ii')\Rightarrow(iii')$. 
	To this end let us make a preliminary remark. Substituting the 
	explicit expression for $\nabla_{\!\mc H}$, cf.\ \cite[(2.5)]{baranov.woracek:spmaj}, we obtain 
	\begin{equation}\label{R19}
	\begin{gathered}
		\frac{\nabla_{\!\mc H}(z)}{\mf m_E(z)}=\frac{|z+i|}{|E(z)|}\Big(\frac{|E(z)|^2-|E(\qu z)|^2}{4\pi\Im z}\Big)^{\frac 12}=
			\frac{|z+i|}{2\sqrt{\pi\Im z}}\big(1-|\Theta(z)|^2\big)^{\frac 12}=
			\\
		=\frac 1{2\sqrt\pi}\frac{|z+i|}{\Im z}\big(1+|\Theta(z)|\big)^{\frac 12}\cdot
			\Big[\Im z\cdot(1-|\Theta(z)|)\Big]^{\frac 12},\quad z\in\bb C^+
			\,.
	\end{gathered}
	\end{equation}
	By \cite[(2.6)]{baranov.woracek:spmaj}, we have $\inf_{z\in\bb C^+}\mf m_E^{-1}\nabla_{\!\mc H}>0$. Since 
	the first factor of the last expression 
	in \eqref{R19} is bounded above and away from zero on $D=i[1,\infty)$, it follows that 
	(for the notations "$\lesssim$" and "$\asymp$" see \cite[\S3]{baranov.woracek:spmaj}) 
	\begin{equation}\label{R20}
		\nabla_{\!\mc H}|_D\lesssim\mf m\ \iff\ 
		\nabla_{\!\mc H}|_D\asymp\mf m\ \iff\ 
		\sup_{y\geq 1}\,y\big(1-|\Theta(iy)|\big)<\infty\,.
	\end{equation}

	\hspace*{0pt}\\[-2mm]\textit{$(i)\Leftrightarrow(iii)$:} 
	Let $C>0$ be such that $\|F\|_{\mf m}\leq C\|F\|_{\mc H}$. Then $|F(z)|\leq C\mf m(z)$, $z\in D$, $\|F\|_{\mc H}\leq 1$. 
	Thus $\nabla_{\!\mc H}|_D\lesssim\mf m$. Using 
	\eqref{R20} and \leref{R18}, we obtain that
	$R_{\mf m}(\mc H)=\mc H$ if and only if the condition $(iii)$ holds. 

	\hspace*{0pt}\\[-2mm]\textit{$(iii')\Rightarrow(ii)$:} 
	Let $\mc H(E)$ be such that $\qu{\dom\mc S_{\mc H}}\neq\mc H$, and assume without loss of generality 
	that $B\in\mc H(E)$. The function $-B^{-1}A$ has nonnegative 
	imaginary part throughout the upper half-plane, and its Herglotz integral representation is of the form 
	\[
		-\frac{A}{B}=pz+\sum_{B(t_n)=0}p_n\Big(\frac 1{t_n-z}-\frac{t_n}{1+t_n^2}\Big)
		\,,
	\]
	with some nonnegative numbers $p$ and $p_n$, $n\in\bb N$. 
	By \cite[Theorem 22]{debranges:1968} and its proof, the linear term $pz$ in this representation does not vanish. Hence, 
	for each $\alpha\in(0,\frac\pi2)$, 
	\[
		0<p=-\lim_{\substack{|z|\to\infty\\z\in\Gamma_\alpha}}
		\frac 1z\cdot\frac{A}{B}=
		i\lim_{\substack{|z|\to\infty\\z\in\Gamma_\alpha}}
		\frac 1z\cdot\frac{1+\Theta(z)}{1-\Theta(z)}
		\,.
	\]
	Since $1+\Theta(z)$ is bounded, it follows that $z(1-\Theta(z))$ is bounded throughout $\Gamma_\alpha$. 

	\hspace*{0pt}\\[-2mm]\textit{$(ii')\Rightarrow(iii')$:} 
	Again it is enough to consider the case that $\phi=0$. Hence, assume that 
	$\alpha\in(0,\frac\pi2)$, $C>0$, and a sequence $(z_n)_{n\in\bb N}$, $z_n\in\Gamma_\alpha$, is given such that 
	$|z_n|\cdot|1-\Theta(z_n)|\leq C$. Then, in particular, $\lim_{n\to\infty}\Theta(z_n)=1$, and it follows that 
	\[
		\liminf_{n\to\infty}\Big|\frac 1{z_n}\cdot\frac{A(z_n)}{B(z_n)}\Big|=
		\liminf_{n\to\infty}\Big|\frac 1{z_n}\cdot\frac{1+\Theta(z_n)}{1-\Theta(z_n)}\Big|\geq\frac 2C
		\,.
	\]
	If we had $\qu{\dom S_{\mc H}}=\mc H$, then by \cite[Theorem 29]{debranges:1968} and the proof of 
	\cite[Theorem 22]{debranges:1968} we would have 
	\[
		\lim_{\substack{|z|\to\infty\\z\in\Gamma_\alpha}}\frac 1z\cdot
		\frac{A(z)}{B(z)}=0
		\,,
	\]
	and obtain a contradiction. Thus $(iii')$ must hold. 
\end{proofof}

For the proof of \thref{R7}, we will employ the following two lemmata. 

\begin{lemma}{R12}
	Let $\mc H=\mc H(E)$ be a de Branges space, and let $(w_k)_{k\in\bb N}$, $w_k\in\bb C^+\cup\bb R$, 
	be a sequence of points with 
	\begin{equation}\label{R13}
		\lim_{k\to\infty}\frac{\mf m_E(w_k)}{\nabla_{\!\mc H}(w_k)}=0\,.
	\end{equation}
	Then there exists a subsequence $(w_{k(n)})_{n\in\bb N}$, such that the sequence 
	$(\tilde K(w_{k(n)},.))_{n\in\bb N}$ of normalized kernels 
	$\tilde K(w,.):=\nabla_{\!\mc H}(w)^{-1}K_{\mc H}(w,.)$ 
	is a Riesz sequence, i.e.\ a Riesz basis in its closed linear span. 
	
	In particular, if $(w_k)_{k\in\bb N}$ satisfies $|w_k|\to\infty$ and $\sup_{k\in\bb N}|\Theta(w_k)|<1$, 
	the hypothesis \eqref{R13} holds true. Here we have again set $\Theta:=E^{-1}E^\#$. 
\end{lemma}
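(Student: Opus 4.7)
The plan is to verify the ``in particular'' consequence first, then establish the main claim by showing $\tilde K(w_k,\cdot)\rightharpoonup 0$ weakly in $\mc H$ and extracting the subsequence via a Gram-matrix diagonalization.

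For the ``in particular'' clause, formula \eqref{R19} yields
\[
	\frac{\nabla_{\!\mc H}(w)}{\mf m_E(w)} \asymp \frac{|w+i|\sqrt{1-|\Theta(w)|^2}}{\sqrt{\Im w}},
\]
and the hypotheses $|w_k|\to\infty$ and $\sup_k|\Theta(w_k)|<1$ force this expression to tend to infinity, since $\sqrt{\Im w_k}/|w_k+i|\to 0$; hence \eqref{R13} holds.

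For the main claim, first observe that \eqref{R13} forces $|w_k|\to\infty$: both $\mf m_E$ and $\nabla_{\!\mc H}$ are continuous and strictly positive on $\bb C^+\cup\bb R$, so their ratio is bounded away from zero on any compact subset. The key pointwise estimate is
\[
	\frac{|K_{\mc H}(w,z)|}{\nabla_{\!\mc H}(w)} \le C_z\cdot\frac{\mf m_E(w)}{\nabla_{\!\mc H}(w)}, \quad |w|\text{ large},\ z\text{ fixed},
\]
which I would establish by writing $K_{\mc H}(w,z)=\pi^{-1}(z-\bar w)^{-1}\bigl(B(z)\overline{A(w)}-A(z)\overline{B(w)}\bigr)$, using the Hermite--Biehler bound $|A(w)|,|B(w)|\le|E(w)|$ on $\bb C^+$ to conclude $|K_{\mc H}(w,z)|\le C_z|E(w)|/|w|$ for $|w|$ large, and then substituting the explicit expression $\nabla_{\!\mc H}(w)=|E(w)|\sqrt{1-|\Theta(w)|^2}/(2\sqrt{\pi\Im w})$ from \eqref{R19}. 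Under hypothesis \eqref{R13} this yields $\tilde K(w_k,z)\to 0$ for every $z\in\bb C$, and combined with $\|\tilde K(w_k,\cdot)\|_{\mc H}=1$ and the density of the linear span of reproducing kernels in $\mc H$, we obtain $\tilde K(w_k,\cdot)\rightharpoonup 0$ weakly in $\mc H$.

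The subsequence is then extracted inductively. Fix $(\epsilon_n)$ with $\sup_m\bigl((m-1)\epsilon_m+\sum_{n>m}\epsilon_n\bigr)<\frac12$, for instance $\epsilon_n=2^{-n-1}$. Having selected indices $k(1)<\ldots<k(n)$, the weak convergence $\tilde K(w_k,\cdot)\rightharpoonup 0$ allows me to pick $k(n+1)>k(n)$ with
\[
	|\langle\tilde K(w_{k(n+1)},\cdot),\tilde K(w_{k(j)},\cdot)\rangle_{\mc H}|<\epsilon_{n+1},\quad j\le n.
\]
The Gram matrix $G=(G_{mn})$ of the selected normalized kernels has diagonal $1$ and off-diagonal entries bounded by $|G_{mn}|\le\epsilon_{\max(m,n)}$, so by Schur's test $\|G-I\|_{B(\ell^2)}\le\sup_m\sum_{n\neq m}|G_{mn}|<1$; hence $G$ is bounded and bounded invertible on $\ell^2$, which is exactly the Riesz sequence property. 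The main obstacle I expect is the pointwise estimate in the third paragraph; once one recognizes that the bound on $|K_{\mc H}(w,z)|$ combines with \eqref{R19} to reproduce precisely the hypothesis ratio $\mf m_E/\nabla_{\!\mc H}$, everything else is routine.
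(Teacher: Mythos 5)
Your proof is correct, and it is structurally the same as the paper's, merely packaged differently. The paper computes $(\tilde K(w_m,\cdot),\tilde K(w_n,\cdot))_{\mc H}$ explicitly from the kernel formula, observes that \eqref{R13} makes this vanish as $n\to\infty$ for each fixed $m$, and then invokes the Gohberg--Krein sufficient criterion $\sum_{n\neq m}|(e_n,e_m)|^2<1$ on the extracted subsequence. You instead establish the pointwise bound $|K_{\mc H}(w,z)|\le C_z\,\mf m_E(w)$ for $|w|$ large (using $|A(w)|,|B(w)|\le|E(w)|$ on $\bb C^+\cup\bb R$), pass to the weak null convergence $\tilde K(w_k,\cdot)\rightharpoonup 0$, and close with a Schur ($\ell^1$) estimate on the Gram matrix. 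The two routes hinge on the same underlying estimate --- your pointwise bound, evaluated at $z=w_m$, is exactly what the paper uses to see the inner products tend to zero --- and either Gram criterion is adequate; the choice between $\ell^1$ and $\ell^2$ off-diagonal control is immaterial here. One thing worth keeping explicit: your observation that \eqref{R13} already forces $|w_k|\to\infty$ is silently used in the paper too, and it rests on the standing hypothesis that no point of $\bb C^+\cup\bb R$ is a common zero of $\mc H$, so that both $\mf m_E$ and $\nabla_{\!\mc H}$ are continuous and strictly positive there. The ``in particular'' clause is handled identically in both proofs via \eqref{R19}.
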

\begin{proof}
	Let $(e_k)_{k\in\bb N}$ be a sequence of elements of some Hilbert space with $\|e_k\|=1$, $k\in\bb N$. Then, 
	in order that $(e_k)_{k\in\bb N}$ is a Riesz sequence, it is sufficient that 
	\[
		\sum_{\substack{n,m=1\\ n\neq m}}^\infty\big|(e_n,e_m)\big|^2<1
		\,,
	\]
	see e.g.\ \cite[VI.Theorem 2.1]{gohberg.krein:1965}. We compute 
	\[
		\big(\tilde K(w_m,.),\tilde K(w_n,.)\big)_{\mc H}=\frac 1{\nabla_{\!\mc H}(w_m)\nabla_{\!\mc H}(w_n)}
		\frac{E(w_n)\qu{E(w_m)}-E(\qu{w_m})E^\#(w_n)}{2\pi i(\qu{w_m}-w_n)}
		\,.
	\]
	Hence, by our assumption \eqref{R13}, for each fixed $m\in\bb N$ 
	\[
		\lim_{n\to\infty}\big(\tilde K(w_m,.),\tilde K(w_n,.)\big)_{\mc H}=0
		\,.
	\]
	Therefore we can extract a subsequence $(w_{k(n)})_{n\in\bb N}$ which satisfies 
	\[
		\sum_{\substack{n,m=1\\ n\neq m}}^\infty|\big(\tilde K(w_{k(n)},.),\tilde K(w_{k(m)},.)\big)_{\mc H}|^2\leq\frac 12
		\,, 
	\]
	and hence gives rise to a Riesz sequence $(\tilde K(w_{k(n)},.))_{n\in\bb N}$. 
	
	In order to see the last assertion, it is enough to consider the first line of \eqref{R19}: 
	\[
		\frac{\mf m_E(z)}{\nabla_{\!\mc H}(z)}=2\sqrt\pi\frac{(\Im z)^{\frac 12}}{|z+i|}\frac 1{(1-|\Theta(z)|^2)^{\frac 12}}
		\,.
	\]
	Hence, if $|w_n|\to\infty$ and $\sup_{n\in\bb N}|\Theta(w_n)|<1$, certainly \eqref{R13} will hold. 
\end{proof}

Throughout the following we will denote by $\mc U$ the linear space of all complex sequences $a=(a_k)_{k\in\bb N}$ with 
\[
	\|a\|_{\mc U}:=\sup_{n\in\bb N}\Big|\sum_{k=1}^n a_k\Big|<\infty
	\,.
\]
Then $\langle\mc U,\|.\|_{\mc U}\rangle$ is a Banach space. Actually, it is isometrically isomorphic to $\ell^\infty$ via 
the map $(a_n)_{n\in\bb N}\mapsto (\sum_{k=1}^na_k)_{n\in\bb N}$. 

\begin{lemma}{R10}
	Set $n_l:=l^3$, $l\in\bb N_0$, and consider the linear map $\Lambda$ which assigns to a sequence 
	$d=(d_k)_{k\in\bb N}$ the sequence $\Lambda(d)=(\Lambda(d)_k)_{k\in\bb N}$ defined as ($d_0:=0$) 
	\[
		\Lambda(d)_k:=\frac{d_l-d_{l-1}}{n_l-n_{l-1}},\qquad n_{l-1}<k\leq n_l,\ l\in\bb N
		\,.
	\]
	Then, whenever $d\in\ell^\infty$, we have $($with $C:=2\sqrt{1+\frac{\pi^2}{18}}$$)$ 
	\begin{equation}\label{R11}
		\|\Lambda(d)\|_2\leq C\|d\|_\infty,\quad\|\Lambda(d)\|_\infty\leq\|d\|_\infty,
		\quad \|\Lambda(d)\|_{\mc U}\leq\|d\|_\infty
		\,.
	\end{equation}
\end{lemma}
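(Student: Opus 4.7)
The plan is to exploit that $\Lambda(d)$ is piecewise constant. Setting $I_l:=\{k\in\bb N:n_{l-1}<k\leq n_l\}$, the sequence $\Lambda(d)$ takes the constant value $v_l:=(d_l-d_{l-1})/(n_l-n_{l-1})$ on the block $I_l$, which has length $n_l-n_{l-1}=3l^2-3l+1$. Since $|d_l|,|d_{l-1}|\leq\|d\|_\infty$ (with the convention $d_0=0$), we have the universal bound $|d_l-d_{l-1}|\leq 2\|d\|_\infty$ for all $l\geq 1$. All three estimates reduce to elementary block-wise computations; no deep idea is required.

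For the $\ell^\infty$-bound I would argue block by block: for $l\geq 2$ we have $n_l-n_{l-1}\geq 7\geq 2$, hence $|v_l|\leq 2\|d\|_\infty/(n_l-n_{l-1})\leq\|d\|_\infty$; for $l=1$ one has $n_1-n_0=1$ and $|v_1|=|d_1|\leq\|d\|_\infty$. For the $\mc U$-norm note that the partial sum up to $k=n_L$ telescopes,
\[
 \sum_{k=1}^{n_L}\Lambda(d)_k=\sum_{l=1}^L(n_l-n_{l-1})v_l=\sum_{l=1}^L(d_l-d_{l-1})=d_L,
\]
while for $n_{L-1}<n<n_L$ the partial sum equals $d_{L-1}+(n-n_{L-1})v_L$, which is the convex combination $(1-t)d_{L-1}+td_L$ with $t=(n-n_{L-1})/(n_L-n_{L-1})\in[0,1]$; hence its modulus is at most $\|d\|_\infty$.

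For the $\ell^2$-bound the direct calculation gives
\[
 \|\Lambda(d)\|_2^2=\sum_{l=1}^\infty(n_l-n_{l-1})|v_l|^2=\sum_{l=1}^\infty\frac{|d_l-d_{l-1}|^2}{n_l-n_{l-1}}\leq 4\|d\|_\infty^2\sum_{l=1}^\infty\frac1{n_l-n_{l-1}}.
\]
Here I would use the crude estimate $n_l-n_{l-1}=3l^2-3l+1\geq 1$ for $l=1$ and $n_l-n_{l-1}\geq 3(l-1)^2$ for $l\geq 2$ (which follows since $3l(l-1)-3(l-1)^2=3(l-1)$); summing yields $\sum_{l\geq 1}(n_l-n_{l-1})^{-1}\leq 1+\frac13\sum_{l\geq 1}l^{-2}=1+\pi^2/18$, from which $\|\Lambda(d)\|_2\leq C\|d\|_\infty$ with the constant $C=2\sqrt{1+\pi^2/18}$ specified in the statement.

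There is no real obstacle; the only minor nuisance is bookkeeping at the boundary index $l=1$, where the block has length one and the telescoping uses $d_0=0$. All three inequalities then follow from the same uniform bound $|d_l-d_{l-1}|\leq 2\|d\|_\infty$ combined with the cubic growth of $n_l$, which guarantees summability for the $\ell^2$ estimate.
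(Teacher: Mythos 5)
Your proof is correct and follows essentially the same route as the paper's: direct term-by-term computation for the $\ell^2$ bound using the cubic gaps $n_l-n_{l-1}=3l^2-3l+1$, the observation $\Lambda(d)_1=d_1$ and $n_l-n_{l-1}\geq 2$ for $l\geq 2$ for the $\ell^\infty$ bound, and the telescoping-plus-linear-interpolation argument (partial sums lie on the segment joining $d_{l-1}$ and $d_l$) for the $\mc U$ bound. The only cosmetic differences are in how you organize the elementary estimate $\sum_l (n_l-n_{l-1})^{-1}\leq 1+\pi^2/18$.
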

\begin{proof}
	We compute 
	\[
		\sum_{k=1}^\infty|\Lambda(d)_k|^2=\sum_{l=1}^\infty (n_l-n_{l-1})\Big|\frac{d_l-d_{l-1}}{n_l-n_{l-1}}\Big|^2\leq
		4\|d\|_\infty^2\sum_{l=1}^\infty\frac 1{l^3-(l-1)^3}=
	\]
	\[
		=4\|d\|_\infty^2\Big(1+\sum_{l=1}^\infty\frac 1{3l^2-3l+1}\Big)\leq 4\big(1+\frac{\pi^2}{18}\big)\,\|d\|_\infty^2
		\,,
	\]
	and this is the first inequality in \eqref{R11}. The second one is obvious since $\Lambda(d)_1=d_1$ and 
	$n_l-n_{l-1}\geq 2$, $l>1$. To see the last inequality, note that 
	\[
		\sum_{k=1}^{n_l}\Lambda(d)_k=d_l,\qquad l\in\bb N
		\,,
	\]
	and that the number $\sum_{k=1}^n\Lambda(d)_k$ lies on the line segment joining $d_{n_{l-1}}$ and $d_{n_l}$ when 
	$l\in\bb N$ is chosen such that $n_{l-1}\leq n\leq n_l$. 
	
\begin{center}
\setlength{\unitlength}{0.0002in}
\begingroup\makeatletter\ifx\SetFigFont\undefined%
\gdef\SetFigFont#1#2#3#4#5{%
  \reset@font\fontsize{#1}{#2pt}%
  \fontfamily{#3}\fontseries{#4}\fontshape{#5}%
  \selectfont}%
\fi\endgroup%
{\renewcommand{\dashlinestretch}{30}
\begin{picture}(10522,5355)(0,-10)

\path(12,2385)(10362,2385)
\blacken\path(10062.000,2310.000)(10362.000,2385.000)(10062.000,2460.000)(10062.000,2310.000)
\path(462,135)(462,4815)
\blacken\path(537.000,4515.000)(462.000,4815.000)(387.000,4515.000)(537.000,4515.000)

\dottedline{200}(912,4185)(1362,4635)(3612,585)
	(5862,3780)(8562,2835)(9822,3060)

\put(912,2360){\makebox(0,0)[c]{{\SetFigFont{6}{12.0}{\rmdefault}{\mddefault}{\updefault}$|$}}}
\put(1362,2360){\makebox(0,0)[c]{{\SetFigFont{6}{12.0}{\rmdefault}{\mddefault}{\updefault}$|$}}}
\put(3612,2360){\makebox(0,0)[c]{{\SetFigFont{6}{12.0}{\rmdefault}{\mddefault}{\updefault}$|$}}}
\put(5817,2360){\makebox(0,0)[c]{{\SetFigFont{6}{12.0}{\rmdefault}{\mddefault}{\updefault}$|$}}}
\put(8562,2360){\makebox(0,0)[c]{{\SetFigFont{6}{12.0}{\rmdefault}{\mddefault}{\updefault}$|$}}}

\put(912,1800){\makebox(0,0)[c]{{\SetFigFont{6}{12.0}{\rmdefault}{\mddefault}{\updefault}$n_{\!1}$}}}
\put(1362,1800){\makebox(0,0)[c]{{\SetFigFont{6}{12.0}{\rmdefault}{\mddefault}{\updefault}$\,n_{\!2}$}}}
\put(3612,1800){\makebox(0,0)[c]{{\SetFigFont{6}{12.0}{\rmdefault}{\mddefault}{\updefault}$n_{\!3}$}}}
\put(5817,1800){\makebox(0,0)[c]{{\SetFigFont{6}{12.0}{\rmdefault}{\mddefault}{\updefault}$n_{\!4}$}}}
\put(8562,1800){\makebox(0,0)[c]{{\SetFigFont{6}{12.0}{\rmdefault}{\mddefault}{\updefault}$n_{\!5}$}}}
\put(10407,1700){\makebox(0,0)[c]{{\SetFigFont{8}{12.0}{\rmdefault}{\mddefault}{\updefault}$n$}}}

\put(912,4185){\makebox(0,0)[c]{{\SetFigFont{6}{12.0}{\rmdefault}{\mddefault}{\updefault}$\bullet$}}}
\put(1362,4635){\makebox(0,0)[c]{{\SetFigFont{6}{12.0}{\rmdefault}{\mddefault}{\updefault}$\bullet$}}}
\put(3612,585){\makebox(0,0)[c]{{\SetFigFont{6}{12.0}{\rmdefault}{\mddefault}{\updefault}$\bullet$}}}
\put(5817,3780){\makebox(0,0)[c]{{\SetFigFont{6}{12.0}{\rmdefault}{\mddefault}{\updefault}$\bullet$}}}
\put(8562,2835){\makebox(0,0)[c]{{\SetFigFont{6}{12.0}{\rmdefault}{\mddefault}{\updefault}$\bullet$}}}

\put(822,4905){\makebox(0,0)[c]{{\SetFigFont{6}{12.0}{\rmdefault}{\mddefault}{\updefault}$d_{\!1}$}}}
\put(1362,5220){\makebox(0,0)[c]{{\SetFigFont{6}{12.0}{\rmdefault}{\mddefault}{\updefault}$d_{\!2}$}}}
\put(3612,45){\makebox(0,0)[c]{{\SetFigFont{6}{12.0}{\rmdefault}{\mddefault}{\updefault}$d_3$}}}
\put(5862,4185){\makebox(0,0)[c]{{\SetFigFont{6}{12.0}{\rmdefault}{\mddefault}{\updefault}$d_4$}}}
\put(8562,3300){\makebox(0,0)[c]{{\SetFigFont{6}{12.0}{\rmdefault}{\mddefault}{\updefault}$d_5$}}}

\put(10500,4700){\makebox(0,0)[c]{{\SetFigFont{8}{12.0}{\rmdefault}{\mddefault}{\updefault}$\sum\limits_{k=1}^n\Lambda(d)_k$}}}

\end{picture}
}
\end{center}
\hspace*{0pt}\vspace*{-8mm}
\end{proof}

\begin{proofof}{\thref{R7}}
	The fact that $(i)\Rightarrow(ii)$ is clear. Let us assume that $(ii)$ holds. 

\hspace*{0pt}\\[-2mm]\textit{Step 1, Extracting a sparse sequence $(v_k)_{k\in\bb N}$:} 
	Let $\delta>0$ be chosen according to \eqref{R8}, and let 
	$(w_k)_{k\in\bb N}$ be a sequence of points with $\Im w_k\geq\delta$, $|w_k|\geq 1$, 
	$|w_k|<|w_{k+1}|$, $|w_k|\to\infty$, and $\lim_{k\to\infty}|\Theta(w_k)|=0$. 
	By \leref{R12}, we may extract a subsequence $(w_{k(n)})_{n\in\bb N}$ such that the normalized kernel 
	functions $\tilde K(w_{k(n)},.)$, $n\in\bb N$, form a Riesz sequence in $\mc H$. Note that therefore also 
	each subsequence of this sequence of functions is a Riesz sequence. Since $\lim_{k\to\infty}|\Theta(w_k)|=0$, 
	we may extract a subsequence $(w_{k(n(l))})_{l\in\bb N}$ such that $\sum_{l=1}^\infty|\Theta(w_{k(n(l))})|\leq\frac 14$. 
	We thus have found a sequence $(u_k)_{k\in\bb N}$ with the following properties 
	\begin{enumerate}[$(i)$]
	\item $\Im u_k\geq\delta>0$, $|u_k|\geq 1$, $|u_k|<|u_{k+1}|$, $k\in\bb N$, $|u_k|\to\infty$;
	\item The sequence $(\tilde K(u_k,.))_{k\in\bb N}$ is a Riesz sequence in $\mc H$;
	\item $\sum_{n=1}^\infty|\Theta(u_n)|\leq\frac 14$.
	\end{enumerate}
	A straightforward induction shows that we may extract yet another subsequence $(v_k)_{k\in\bb N}$ 
	of $(u_k)_{k\in\bb N}$ which satisfies 
	\begin{enumerate}[$(i)$]
	\setcounter{enumi}{3}
	\item $\sum_{n=1}^k|v_n|\leq\frac 18 |v_kv_{k+1}|^{\frac 12}$, $k\in\bb N$;
	\item $\sum_{n=l}^k\frac 1{|v_n|}<\frac 18 |v_{l-1}v_l|^{-\frac 12}$, $l=2,\ldots,k$; and hence, letting 
		$k\to\infty$, 
		\[
			\sum_{n=l}^\infty\frac 1{|v_n|}\leq\frac 18 |v_{l-1}v_l|^{-\frac 12},\quad l\geq 2
			\,. 
		\]
	\end{enumerate}
	Since each of the properties $(i)$--$(iii)$ remains valid when passing to subsequences, the sequence $(v_k)_{k\in\bb N}$ 
	satisfies $(i)$--$(v)$. 

\hspace*{0pt}\\[-2mm]\textit{Step 2, Definition of $\Psi$:} 
	Since $(\tilde K(v_k,.))_{k\in\bb N}$ is a Riesz sequence in $\mc H$, the map $\rho$ defined as 
	\begin{equation}\label{R15}
		\rho:(a_k)_{k\in\bb N}\mapsto\sum_{k=1}^\infty a_k\tilde K(v_k,.)
	\end{equation}
	induces a bicontinuous embedding of $\ell^2$ into $\mc H$. Set 
	\[
		\mu_k:=i\bigg(\pi \frac{1-|\Theta(v_k)|^2}{\Im v_k}\bigg)^{1/2}\,
		\frac{|E(v_k)|}{\qu{E(v_k)}},\qquad k\in\bb N
		\,.
	\]
	Since $\Im v_k\geq\delta>0$, $k\in\bb N$, the sequence $(\mu_k)_{k\in\bb N}$ belongs to $\ell^\infty$. Thus the 
	multiplication operator 
	\begin{equation}\label{R22}
		\mu:(a_k)_{k\in\bb N}\mapsto(\mu_ka_k)_{k\in\bb N}
	\end{equation}
	induces a bounded operator of $\ell^2$ into itself. Finally, let $\Lambda$ be the operator 
	constructed in \leref{R10} considered as an element of $\mc B(\ell^\infty,\ell^2)$, and set 
	\[
		\Psi:=\rho\circ\mu\circ\Lambda
		\,.
	\]
	Then $\Psi\in\mc B(\ell^\infty,\mc H)$. 

\hspace*{0pt}\\[-2mm]\textit{Step 3, $\Psi\in\mc B(\ell^\infty,R_{\mf m}(\mc H))$:} 
	Let $d\in\ell^\infty$. We have, by our choice of $\mu_k$, 
	\begin{equation}\label{R23}
	\begin{split}
		\Psi(d)(z)=&\sum_{n=1}^\infty i\bigg( \pi\frac{1-|\Theta(v_n)|^2}
                        {\Im v_n}\bigg)^{1/2}  \frac{|E(v_n)|}{\qu{E(v_n)}}
			\Lambda(d)_n\cdot\tilde K(v_n,z)=
			\\
		=&\sum_{n=1}^\infty i\bigg(\pi\frac{1\!-\!|\Theta(v_n)|^2}{\Im v_n}\bigg)^{1/2}\,
			\frac{|E(v_n)|}{\qu{E(v_n)}}\Lambda(d)_n
			\cdot\!\bigg[\Big(\frac{|E(v_n)|^2-|E(\qu{v_n})|^2}{4\pi\Im v_n}\Big)^{\!-1/2}\cdot
			\\
		&\cdot\frac{E(z)\qu{E(v_n)}-E(\qu{v_n})E^\#(z)}{2\pi i(\qu{v_n}-z)}\bigg]=
			E(z)\sum_{n=1}^\infty\Lambda(d)_n\frac{1-\qu{\Theta(v_n)}\Theta(z)}{\qu{v_n}-z}
			\,,
	\end{split}
	\end{equation}
	and hence 
	\[
		\frac{\Psi(d)(z)}{E(z)}=\sum_{n=1}^\infty\Lambda(d)_n\frac{1-\qu{\Theta(v_n)}\Theta(z)}{\qu{v_n}-z}=
		\sum_{n=1}^\infty\Big[\frac{\Lambda(d)_n}{\qu{v_n}-z}-\Lambda(d)_n\qu{\Theta(v_n)}\frac{\Theta(z)}{\qu{v_n}-z}\Big]
		\,.
	\]
	Let $y\in[|v_1|,\infty)$ be given, and choose $k\in\bb N$ such that $|v_k|\leq y\leq|v_{k+1}|$. Using 
	\eqref{R11} and the properties $(iii)$, $(iv)$, $(v)$ of $(v_k)_{k\in\bb N}$, 
	we obtain the below estimates. Note thereby that $v_n\in\bb C^+$ and hence 
	$|\qu{v_n}-iy|\geq\max\{|v_n|,y\}$. 
	\begin{multline*}
		\Big|\sum_{n=1}^{k-1}\frac{\Lambda(d)_n}{\qu{v_n}-iy}\Big|=
		\Big|\sum_{n=1}^{k-1}\frac{\Lambda(d)_n}{iy}\Big(-1+\frac{\qu{v_n}}{\qu{v_n}-iy}\Big)\Big|\leq
		\\
		\shoveright{
		\leq\frac 1y\|\Lambda(d)\|_{\mc U}+\frac{\|\Lambda(d)\|_\infty}{y^2}
		\hspace*{-4mm}\underbrace{\ \sum_{n=1}^{k-1}|v_n|\ }_{\leq\frac 18|v_{k-1}v_k|^{\frac 12}\leq\frac y8}\hspace*{-4mm}
		\leq\frac{9}{8y}\|d\|_\infty
		\,,}\\[-12mm]
	\end{multline*}
	\begin{multline*}
		\Big|\frac{\Lambda(d)_k}{\qu{v_k}-iy}+\frac{\Lambda(d)_{k+1}}{\qu{v_{k+1}}-iy}\Big|
		\leq\frac 2y\|\Lambda(d)\|_\infty\leq\frac 2y\|d\|_\infty
		\,,\\[-10mm]
	\end{multline*}
	\begin{multline*}
		\Big|\sum_{n=k+2}^\infty\frac{\Lambda(d)_n}{\qu{v_n}-iy}\Big|\leq\|\Lambda(d)\|_\infty
		\hspace*{-5mm}\underbrace{\ \sum_{n=k+2}^\infty\frac 1{|v_n|}\ }_{
			\leq\frac 18|v_{k+1}v_{k+2}|^{-\frac 12}\leq\frac 1{8y}}\hspace*{-5mm}
		\leq\frac 1{8y}\|d\|_\infty
		\,,\\[-10mm]
	\end{multline*}
	\begin{multline*}
		\Big|\sum_{n=1}^\infty\Lambda(d)_n\qu{\Theta(v_n)}\frac{\Theta(iy)}{\qu{v_n}-iy}\Big|\leq\|\Lambda(d)\|_\infty
		\frac 1y\sum_{n=1}^\infty|\Theta(v_n)|\leq\frac 1{4y}\|d\|_\infty
		\,.\\[-5mm]
	\end{multline*}
	Altogether, it follows that 
	\[
		y\Big|\frac{\Psi(d)(iy)}{E(iy)}\Big|\leq\frac 72\|d\|_\infty,\qquad y\in[|v_1|,\infty)
		\,.
	\]
	Consider the majorant $\tilde{\mf m}(z):=\frac{|E(z)|}{|z|}$ defined on the ray $\tilde D:=i[|v_1|,\infty)$. 
	The above estimate, together with the already known fact that $\Psi\in\mc B(\ell^\infty,\mc H)$, implies that 
	$\Psi(\ell^\infty)\subseteq R_{\tilde{\mf m}}(\mc H)$ and 
	$\Psi\in\mc B(\ell^\infty,\langle R_{\tilde{\mf m}}(\mc H),\|.\|_{\tilde{\mf m}}\rangle)$. 

	Since the domains of $\mf m$ and $\tilde{\mf m}$ differ only by a bounded set, and $\tilde{\mf m}\asymp\mf m|_{\tilde D}$, 
	we have $R_{\mf m}(\mc H)=R_{\tilde{\mf m}}(\mc H)$ and the norms $\|.\|_{\mf m}$ and $\|.\|_{\tilde{\mf m}}$ are 
	equivalent. We conclude that 
	\[
		\Psi\in\mc B\big(\ell^\infty,\langle R_{\mf m}(\mc H),\|.\|_{\mf m}\rangle\big)
		\,. 
	\]

\hspace*{0pt}\\[-2mm]\textit{Step 4, $\Psi$ is bicontinuous:} 
	Let $n_l=l^3$, cf.\ \leref{R10}, and consider the values $y_k:=|v_{n_k}v_{n_k+1}|^{\frac 12}$, $k\in\bb N$. 
	Then, for each sequence $d\in\ell^\infty$, 
	\begin{multline*}
		\frac{\Psi(d)(iy_k)}{E(iy_k)}=-\frac 1{iy_k}\underbrace{\sum_{n=1}^{n_k}\Lambda(d)_n}_{=d_k}+
			\sum_{n=1}^{n_k}\Lambda(d)_n\frac 1{iy_k}\frac{\qu{v_n}}{\qu{v_n}-iy_k}+
			\\
		\shoveright{
			+\sum_{n=n_k+1}^\infty\frac{\Lambda(d)_n}{\qu{v_n}-iy_k}-
			\sum_{n=1}^\infty\Lambda(d)_n\qu{\Theta(v_n)}\frac{\Theta(iy_k)}{\qu{v_n}-iy_k}
			\,.}\\
	\end{multline*}
	However, we estimate 
	\begin{multline*}
		\Big|\sum_{n=1}^{n_k}\Lambda(d)_n\frac 1{iy_k}\frac{\qu{v_n}}{\qu{v_n}-iy_k}\Big|\leq
		\frac{\|\Lambda(d)\|_\infty}{y_k^2}
		\hspace*{-5mm}\underbrace{\ \sum_{n=1}^{n_k}|v_n|\ }_{\leq\frac 18|v_{n_k}v_{n_k+1}|^{\frac 12}=
		\frac{y_k}8}\hspace*{-6mm}\leq\frac 1{8y_k}\|d\|_\infty
		\,,\\[-10mm]
	\end{multline*}
	\begin{multline*}
		\Big|\sum_{n=n_k+1}^\infty\frac{\Lambda(d)_n}{\qu{v_n}-iy}\Big|\leq\|\Lambda(d)\|_\infty
		\hspace*{-5mm}\underbrace{\ \sum_{n=n_k+1}^\infty\frac 1{|v_n|}\ }_{
			\leq\frac 18|v_{n_k}v_{n_k+1}|^{-\frac 12}=\frac 1{8y_k}}\hspace*{-5mm}
		\leq\frac 1{8y_k}\|d\|_\infty
		\,,\\[-10mm]
	\end{multline*}
	\begin{multline*}
		\Big|\sum_{n=1}^\infty\Lambda(d)_n\qu{\Theta(v_n)}\frac{\Theta(z)}{\qu{v_n}-z}\Big|\leq\|\Lambda(d)\|_\infty
		\frac 1{y_k}\sum_{n=1}^\infty|\Theta(v_n)|\leq\frac 1{4y_k}\|d\|_\infty
		\,,\\[-5mm]
	\end{multline*}
	and it follows that 
	\[
		\|\Psi(d)\|_{\tilde{\mf m}}\geq\frac{\Psi(d)(iy_k)}{\tilde{\mf m}(iy_k)}\geq
		|d_k|-\frac 12\|d\|_\infty
		\,.
	\]
	Taking the supremum over $k\in\bb N$, yields $\|\Psi(d)\|_{\tilde{\mf m}}\geq\frac 12\|d\|_\infty$. 
\end{proofof}

With the help of \leref{R12} we can also show that $R_{\mf m}(\mc H)$ may contain 
infinite dimensional closed subspaces $\mc M$ on which the norms $\|.\|_{\mf m}$ and $\|.\|_{\mc H}$ 
are equivalent, even if $R_{\mf m}(\mc H)$ itself is not $\|.\|_{\mc H}$-closed. 

\begin{proposition}{R14}
	Let $\mc H=\mc H(E)$ be a de~Branges space, and set $\mf m:=\mf m_{E}|_{i[1,\infty)}$. 
	Moreover, denote $\Theta:=E^{-1}E^\#$. Assume that there exists a sequence 
	$(w_k)_{k\in\bb N}$, $w_k\in\bb C^+$, such that 
	\[
		\liminf_{k\in\bb N}\Im w_k=0\ \text{ and }\ \sup_{k\in\bb N}|\Theta(w_k)|<1
		\,.
	\]
	Then there exists an infinite dimensional subspace $\mc M$ of 
	$R_{\mf m}(\mc H)$ which is $\|.\|_{\mc H}$-closed. In particular, the norms $\|.\|_{\mf m}|_{\mc M}$ and 
	$\|.\|_{\mc H}|_{\mc M}$ are equivalent. 
\end{proposition}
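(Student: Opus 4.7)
The plan is to define $\mc M$ as the $\|\cdot\|_{\mc H}$-closed linear span of a suitably thinned subsequence of normalized reproducing kernels $\tilde K(w_{k(n)},\cdot)$, chosen so that \leref{R12} delivers a Riesz sequence while in addition $\sum_n\Im w_{k(n)}$ converges; this summability is what will drive the subsequent pointwise majorization on $D=i[1,\infty)$.

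First I would, after a preliminary extraction, fix $c<1$ with $|\Theta(w_k)|\le c$ for all $k$. Using $\liminf_k\Im w_k=0$ I pass to a subsequence $(u_n)$ with $\Im u_n\le 1/2$ and $\sum_n\Im u_n<\infty$. Since $|w+i|\ge 1$ on the closed upper half-plane, the first line of \eqref{R19} then gives
\[
    \frac{\mf m_E(u_n)}{\nabla_{\!\mc H}(u_n)}=2\sqrt{\pi}\,\frac{(\Im u_n)^{1/2}}{|u_n+i|(1-|\Theta(u_n)|^2)^{1/2}}\longrightarrow 0,
\]
so the first part of \leref{R12} yields a further subsequence, which I relabel $(w_{k(n)})$, whose normalized kernels form a Riesz sequence in $\mc H$; the previously secured properties clearly persist. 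Let $\mc M$ be the $\|\cdot\|_{\mc H}$-closed linear span of this sequence. Then $\mc M$ is infinite dimensional and $\|\cdot\|_{\mc H}$-closed, and the Riesz basis property provides a bicontinuous isomorphism $(a_n)\in\ell^2\mapsto F=\sum_n a_n\tilde K(w_{k(n)},\cdot)\in\mc M$, so $\|a\|_{\ell^2}\asymp\|F\|_{\mc H}$.

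To verify $\mc M\subseteq R_{\mf m}(\mc H)$ along with the bound $\|F\|_{\mf m}\le C\|F\|_{\mc H}$, I would estimate the kernel along the imaginary ray. From the explicit formula for $K_{\mc H}(w,\cdot)$, the inequalities $|\Theta|\le 1$ on $\bb C^+$, $|\qu w-iy|\ge y$, and (for the lower axis) $|\qu w+iy|\ge y/2$ when $\Im w\le 1/2$ combined with $|E(-iy)|=|\Theta(iy)|\,|E(iy)|$, one gets the uniform pointwise estimate $|K_{\mc H}(w,\pm iy)|\le \mathrm{const}\cdot|E(w)|\,\mf m(iy)$ for $y\ge 1$. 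Together with $\nabla_{\!\mc H}(w)^2=|E(w)|^2(1-|\Theta(w)|^2)/(4\pi\Im w)$ and Cauchy--Schwarz this yields
\[
    |F(\pm iy)|^2\le\|a\|_{\ell^2}^2\sum_n|\tilde K(w_{k(n)},\pm iy)|^2\le C\,\mf m(iy)^2\,\|F\|_{\mc H}^2\sum_n\frac{\Im w_{k(n)}}{1-|\Theta(w_{k(n)})|^2},
\]
and the last series is finite by construction. Since $|F^{\#}(iy)|=|F(-iy)|$, this gives $F\in R_{\mf m}(\mc H)$ with $\|F\|_{\mf m}\le C'\|F\|_{\mc H}$; the reverse inequality $\|F\|_{\mc H}\le\|F\|_{\mf m}$ is automatic.

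The main obstacle is organising the subsequence extraction to secure three conditions simultaneously: $\sup_n|\Theta(w_{k(n)})|<1$, summability of $\Im w_{k(n)}$, and the Riesz basis property from \leref{R12}. I would handle this by thinning in stages---first the uniform $\Theta$-bound, then summability, then the Riesz property---and observing that each extraction preserves what was secured earlier.
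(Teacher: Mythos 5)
Your argument is correct and follows essentially the same route as the paper: extract a subsequence with $\sum_n\Im w_{k(n)}<\infty$ and a uniform bound $|\Theta(w_{k(n)})|\le c<1$, use \leref{R12} to further thin to a Riesz sequence of normalized kernels, take $\mc M$ to be its closed linear span, and verify $\|F\|_{\mf m}\lesssim\|F\|_{\mc H}$ using the kernel estimate $|\tilde K(w_n,\pm iy)|\lesssim\mf m(iy)\sqrt{\Im w_n}$ along the ray. The only cosmetic difference is that you apply Cauchy--Schwarz pointwise in $y$ to $|F(\pm iy)|^2\le\|a\|_2^2\sum_n|\tilde K(w_{k(n)},\pm iy)|^2$, whereas the paper first passes to $\|\mf m^{-1}\tilde K_n\|_\infty$ and then applies Cauchy--Schwarz to $\sum_n|a_n|\,\|\mf m^{-1}\tilde K_n\|_\infty$; both reduce to the same summability requirement.
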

\begin{proof}
	The normalized kernel function $\tilde K(w,.)$, $w\in\bb C^+$, is explicitly given as 
	\[
		\tilde K(w,z)=\Big(\frac{|E(w)|^2-|E(\qu w)|^2}{4\pi\Im w}\Big)^{-\frac 12}
		\frac{E(z)\qu{E(w)}-E(\qu w)E^\#(z)}{2\pi i(\qu w-z)}=
	\]
	\[
		=E(z)\frac{1-\qu{\Theta(w)}\Theta(z)}{i\sqrt\pi(\qu w-z)}
		\Big(\frac{\Im w}{1-|\Theta(w)|^2}\Big)^{\frac 12}\frac{\qu{E(w)}}{|E(w)|}
		\,.
	\]
	We obtain the estimate ($\delta:=\sup_{n\in\bb N}|\Theta(w_n)|$) 
	\[
		\Big|\frac{\tilde K(w_n,iy)}{\mf m(iy)}\Big|\leq\frac{2}{\sqrt\pi}\frac{y+1}y
		\frac{\sqrt{\Im w_n}}{1-\delta^2},\quad y\geq 1
		\,.
	\]
	Hence $\tilde K(w_n,.)\in R_{\mf m}(\mc H)$ and 
	\[
		\Big\|\frac{\tilde K(w_n,.)}{\mf m}\Big\|_\infty\leq \frac{4\sqrt{\Im w_n}}{\sqrt\pi (1-\delta^2)}
		\,.
	\]
	Since $\liminf_{n\to\infty}\Im w_n=0$, we can extract 
	a subsequence $(v_n)_{n\in\bb N}$ of $(w_n)_{n\in\bb N}$ such that $\sum_{n\in\bb N}\Im v_n<\infty$. 
	Moreover, by \leref{R12}, the choice of $(v_n)_{n\in\bb N}$ can be made such that 
	$(\tilde K(v_k,.))_{k\in\bb N}$, is a Riesz sequence. Consider the bicontinuous embedding 
	$\rho:\ell^2\to\mc H$ defined as in \eqref{R15}. Since,	the sequence 
	$(\|\mf m^{-1}\tilde K(v_n,.)\|_\infty)_{n\in\bb N}$ is by our choice of $(v_n)_{n\in\bb N}$ square summable, 
	for each sequence $a=(a_n)_{n\in\bb N}\in\ell^2$ 
	\[
		\Big\|\frac 1{\mf m}\sum_{n\in\bb N}a_n\tilde K_n\Big\|_\infty\leq
		\sum_{n\in\bb N}|a_n|\cdot\Big\|\frac{\tilde K_n}{\mf m}\Big\|_\infty\leq 
		\|a\|_2\cdot\big\|(\|\mf m^{-1}\tilde K_n\|_\infty)_{n\in\bb N}\big\|_2
		<\infty
		\,.
	\]
	Thus every element of $\mc M:=\clos_{\mc H}\spn\{\tilde K(v_n,.):\,n\in\bb N\}$ belongs to 
	$R_{\mf m}(\mc H)$. Since $\|.\|_{\mf m}\geq\|.\|_{\mc H}$, this implies that $\|.\|_{\mf m}|_{\mc M}$ and 
	$\|.\|_{\mc H}|_{\mc M}$ are equivalent. 
\end{proof}

Let us explicitly state the following observation, which we obtain from \prref{R14} in conjunction with \thref{R7}. 

\begin{corollary}{R16}
	Let $\mc H=\mc H(E)$ be a de~Branges space, and set $\mf m:=\mf m_{E}|_{i[1,\infty)}$. 
	Denote by $(w_k)_{k\in\bb N}$ the sequence of zeros of $E^\#$, and assume that 
	\[
		\liminf_{k\to\infty}\Im w_k=0,\quad \limsup_{k\to\infty}\Im w_k>0
		\,.
	\]
	Then the space $\langle R_{\mf m}(\mc H),\|.\|_{\mf m}\rangle$ 
	contains two closed infinite dimensional subspaces $\mc M_1$, $\mc M_2$, such that $\|.\|_{\mf m}|_{\mc M_1}$ 
	is equivalent to the Hilbert space norm $\|.\|_{\mc H}|_{\mc M_1}$, and 
	$\langle\mc M_2,\|.\|_{\mf m}|_{\mc M_2}\rangle$ is bicontinuously isomorphic to $\langle\ell^\infty,\|.\|_\infty\rangle$. 
	\popQED{\qed}
\end{corollary}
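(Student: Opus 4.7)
The plan is to extract two complementary subsequences from $(w_k)_{k\in\bb N}$ and feed them into the two preceding results: \prref{R14} will produce $\mc M_1$, while \thref{R7} will produce $\mc M_2$. The preliminary remark needed for both parts is that, since $E$ is Hermite--Biehler, $E$ has no zeros in $\bb C^+$; consequently the zeros of $E^\#$ in $\bb C^+$ coincide (with multiplicities) with the zeros of $\Theta=E^{-1}E^\#$ in $\bb C^+$. In particular, at every point $w_k$ we have $\Theta(w_k)=0$.

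For the construction of $\mc M_1$, use the hypothesis $\liminf_{k\to\infty}\Im w_k=0$ to pick a subsequence $(w_{k(j)})_{j\in\bb N}$ with $\Im w_{k(j)}\to 0$. Along this subsequence $|\Theta(w_{k(j)})|=0$, so certainly $\sup_j|\Theta(w_{k(j)})|<1$, which is exactly the hypothesis of \prref{R14}. The proposition then yields an infinite dimensional $\|.\|_{\mc H}$-closed subspace $\mc M_1\subseteq R_{\mf m}(\mc H)$ on which $\|.\|_{\mf m}$ and $\|.\|_{\mc H}$ are equivalent. Since $\|.\|_{\mf m}\geq\|.\|_{\mc H}$, this equivalence together with $\|.\|_{\mc H}$-closedness also gives $\|.\|_{\mf m}$-closedness of $\mc M_1$.

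For the construction of $\mc M_2$, use the hypothesis $\limsup_{k\to\infty}\Im w_k>0$. The sequence of zeros of $\Theta$ in $\bb C^+$ (listed in \thref{R7}) in particular contains the subsequence $(w_k)_{k\in\bb N}$, so its $\limsup$ of imaginary parts is also positive. Hence condition $(i)$ of \thref{R7} is satisfied, and the theorem provides a bicontinuous embedding $\Psi:\langle\ell^\infty,\|.\|_\infty\rangle\hookrightarrow\langle R_{\mf m}(\mc H),\|.\|_{\mf m}\rangle$. Set $\mc M_2:=\Psi(\ell^\infty)$. Bicontinuity of $\Psi$ and completeness of $\ell^\infty$ imply that $\mc M_2$ is a $\|.\|_{\mf m}$-closed subspace, and $\Psi$ provides the desired bicontinuous isomorphism between $\langle\mc M_2,\|.\|_{\mf m}|_{\mc M_2}\rangle$ and $\langle\ell^\infty,\|.\|_\infty\rangle$. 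In particular $\mc M_2$ is infinite dimensional, which completes the proof.

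Since the two constructions are independent and \prref{R14} and \thref{R7} do the heavy lifting, there is no real obstacle here; the only thing to verify is the harmless observation that zeros of $E^\#$ in $\bb C^+$ are precisely zeros of $\Theta$ in $\bb C^+$, so that the two subsequences actually feed into the correct hypotheses.
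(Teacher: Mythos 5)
Your proposal is correct and follows essentially the same route the paper takes: R16 is stated precisely as a consequence of \prref{R14} and \thref{R7}, and you apply them in exactly that way. The one fact you spell out — that the zeros of $E^\#$ in $\bb C^+$ coincide with the zeros of $\Theta=E^{-1}E^\#$ in $\bb C^+$ because the Hermite--Biehler function $E$ is zero-free in $\bb C^+$ (and has no real zeros under the paper's standing assumption) — is exactly the bridge needed, and it makes $\sup_j|\Theta(w_{k(j)})|=0<1$ along a subsequence with $\Im w_{k(j)}\to0$, feeding \prref{R14}, while $\limsup_k\Im w_k>0$ feeds condition $(i)$ of \thref{R7}.
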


%
%
\begin{flushleft}
	\textbf{b. Majorization along the real line.}
\end{flushleft}
\vspace*{-2mm}
In this subsection we study majorization on $D:=\bb R$. It turns out that in this case the situation is different; 
the geometry of $R_{\mf m_{E_1}|_D}(\mc H)$ is always complicated. 

\begin{theorem}{R21}
	Let $\mc H$ be a de~Branges space, let $\mc L=\mc H(E_1)\in\Sub^*\mc H$, and set $\mf m:=\mf m_{E_1}|_{\bb R}$. 
	Then there exists a bicontinuous embedding of $\langle\ell^\infty,\|.\|_\infty\rangle$ into 
	$\langle R_{\mf m}(\mc H)\cap\mc L,\|.\|_{\mf m}\rangle$. In particular, $R_{\mf m}(\mc H)$ is neither 
	separable nor reflexive. 
\end{theorem}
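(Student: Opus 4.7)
The plan is to mimic the proof of \thref{R7}, this time working inside $\mc L=\mc H(E_1)$ with real sampling points $t_n\in\bb R$ at which $E_1(t_n)\neq 0$. On $\bb R$ one has $|\Theta_1|\equiv 1$ (with $\Theta_1:=E_1^{-1}E_1^\#$), so the smallness-of-$\Theta$ trick from the proof of \thref{R7} is not available; its role is to be taken over by the uniform bound $|\Theta_1(t_n)-\Theta_1(t)|\leq 2$ valid on $\bb R$, combined with Abel summation through the estimate $\|\Lambda(d)\|_{\mc U}\leq\|d\|_\infty$ from \leref{R10}.

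First, I would extract an increasing sequence $(t_n)_{n\in\bb N}\subset\bb R$ with $|t_n|\to\infty$, $E_1(t_n)\neq 0$, and bounded phase density $\varphi_1'(t_n)$, such that the normalized reproducing kernels $\tilde K_1(t_n,\cdot)$ of $\mc L$ form a Riesz sequence in $\mc L$. A direct computation based on
\[
\frac{K_1(t_n,z)}{E_1(z)}=\frac{E_1(t_n)\big(\Theta_1(t_n)-\Theta_1(z)\big)}{2\pi i(t_n-z)}
\]
then shows that each $\tilde K_1(t_n,\cdot)$ belongs to $R_{\mf m}(\mc H)\cap\mc L$. The Riesz sequence property is obtained via a variant of \leref{R12} applied inside $\mc L$ (using infinite-dimensionality of $\mc L$ and elementary properties of the phase function $\varphi_1$ to produce an abundance of suitable real points). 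After further thinning, renaming the remaining points $(v_n)_{n\in\bb N}$, I would arrange the sparsity conditions
\[
\sum_{n=1}^k|v_n|\leq\tfrac 18|v_kv_{k+1}|^{1/2},\qquad\sum_{n=l}^\infty\frac 1{|v_n|}\leq\tfrac 18|v_{l-1}v_l|^{-1/2},
\]
exactly as in Step 1 of the proof of \thref{R7}.

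Next, I would set $\Psi:=\rho\circ\mu\circ\Lambda$ where $\rho:\ell^2\to\mc L$, $\rho(a)=\sum_na_n\tilde K_1(v_n,\cdot)$, is the Riesz basis embedding, $\mu\in\mc B(\ell^2)$ is the diagonal multiplier chosen so that $\mu_n\tilde K_1(v_n,z)/E_1(z)=(\Theta_1(v_n)-\Theta_1(z))/(v_n-z)$ (its boundedness reflecting the uniform bound on $\varphi_1'(v_n)$), and $\Lambda\in\mc B(\ell^\infty,\ell^2)$ is the operator from \leref{R10}, so that $\Psi:\ell^\infty\to\mc L$ and
\[
\frac{\Psi(d)(z)}{E_1(z)}=\sum_{n=1}^\infty\Lambda(d)_n\cdot\frac{\Theta_1(v_n)-\Theta_1(z)}{v_n-z},\qquad d\in\ell^\infty,
\]
which is the real-line analogue of \eqref{R23}. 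For the membership $\Psi(d)\in R_{\mf m}(\mc H)$ I would, given $t\in\bb R$ with $|v_k|\leq|t|\leq|v_{k+1}|$, split the series into the ranges $n<k$, $n\in\{k,k+1\}$, $n>k+1$ and estimate each block using the appropriate norm out of $\|\Lambda(d)\|_\infty$, $\|\Lambda(d)\|_2$, $\|\Lambda(d)\|_{\mc U}$ from \eqref{R11}, combined with the sparsity conditions on $(v_n)$. The pointwise bound $|\Theta_1(v_n)-\Theta_1(t)|\leq 2$ substitutes for the summability $\sum|\Theta(v_n)|\leq\tfrac 14$ used in the upper half-plane; the apparent singularities at $t=v_n$ are in fact removable by continuity of the summand. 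For $|t|\leq|v_1|$ the estimate reduces to the local boundedness of $\mf m^{-1}$ on compact subsets of $\bb R$ where $E_1$ has no zero. Altogether one obtains $\|\Psi(d)\|_{\mf m}\leq C\|d\|_\infty$.

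Bicontinuity is then established as in Step 4 of the proof of \thref{R7}: evaluating $\Psi(d)/\mf m$ at $y_k:=|v_{n_k}v_{n_k+1}|^{1/2}$ with $n_k:=k^3$, the identity $\sum_{n=1}^{n_k}\Lambda(d)_n=d_k$ produces a dominant contribution of order $|d_k|$, while the remaining four pieces are of order $\|d\|_\infty/y_k$ by the same kind of estimate as in Step 4 of \thref{R7}; passing to the supremum over $k$ yields $\|\Psi(d)\|_{\mf m}\geq\tfrac 12\|d\|_\infty$. The concluding statement about non-separability and non-reflexivity is immediate, since $\ell^\infty$ has neither property and both pass to closed subspaces. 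The principal technical obstacle is the uniform estimate of $\Psi(d)(t)/\mf m(t)$ throughout $\bb R$: the complete absence of decay in $\Theta_1$ on the real line forces the argument to rest solely on the pointwise bound $|\Theta_1(v_n)-\Theta_1(t)|\leq 2$ together with Abel summation through $\|\Lambda(d)\|_{\mc U}\leq\|d\|_\infty$; a secondary but non-trivial point is the initial extraction of real sampling points enjoying simultaneously the Riesz sequence property and a uniform bound on the phase density, which requires slightly more care than the corresponding extraction on the imaginary axis.
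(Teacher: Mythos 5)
Your proposal diverges fundamentally from the paper's approach, and unfortunately the divergence is where the argument breaks. You propose to sample at real points $t_n$, so that the relevant representation reads
\[
\frac{\Psi(d)(t)}{E_1(t)}=\sum_{n=1}^\infty\Lambda(d)_n\,\frac{\Theta_1(v_n)-\Theta_1(t)}{v_n-t}\,,
\]
and to control the block $n<k$ (with $|v_k|\le|t|\le|v_{k+1}|$) via the splitting $\frac1{v_n-t}=-\frac1t+\frac{v_n}{t(v_n-t)}$ together with $\|\Lambda(d)\|_{\mc U}\le\|d\|_\infty$. But this splitting now produces, besides the harmless pieces, the term
\[
-\frac1t\sum_{n=1}^{k-1}\Lambda(d)_n\,\Theta_1(v_n)\,,
\]
and since $|\Theta_1(v_n)|=1$ for real $v_n$ with an uncontrolled phase, this sum cannot be bounded by $\|\Lambda(d)\|_{\mc U}$: the best one can say a priori is $|\sum_{n<k}\Lambda(d)_n\Theta_1(v_n)|\le\|\Lambda(d)\|_1$ or $\le\sqrt{k}\,\|\Lambda(d)\|_2$, and neither is $O(\|d\|_\infty)$ uniformly in $k$. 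So the required estimate $|t|\,|\Psi(d)(t)/E_1(t)|\lesssim\|d\|_\infty$ does not close. In the proof of \thref{R7} this problem never arises because there one samples at zeros $v_n$ of $\Theta$, so that $\Theta(v_n)=0$ and the representation \eqref{R23} degenerates to $\sum_n\Lambda(d)_n/(\qu{v_n}-z)$ with no oscillating prefactor; the same device is what the paper uses in \thref{R21}. This is not a cosmetic choice: it is precisely what makes the $\|.\|_{\mc U}$-Abel-summation trick applicable.

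A secondary gap is the extraction step. You want real points $t_n$ carrying simultaneously (a) the Riesz-sequence condition from \leref{R12}, which for real points amounts to $|t_n|\sqrt{\varphi_1'(t_n)}\to\infty$, and (b) a uniform \emph{upper} bound on $\varphi_1'(t_n)$ so that the multiplier $\mu$ is bounded on $\ell^2$. It is not at all clear that such a sequence exists in a general de~Branges space, and you give no argument. The paper sidesteps the issue entirely: for \thref{R21} it never extracts a Riesz sequence. Instead it observes that $\rho:a\mapsto\sum a_n\tilde K_1(v_n,.)$ is always a contraction from $\ell^1$ into $\mc L$, and then builds a factorization that lands in $\ell^1$ first. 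Concretely, the paper distinguishes two cases according to whether the zeros $w_k$ of $\Theta_1:=E_1^{-1}E_1^\#$ approach the real axis (in the angular sense $\Im w_k/|w_k|\to 0$ along a subsequence) or stay in a Stolz angle $\Gamma_\alpha$. In the first case it uses $\Psi_1=\rho\circ\gamma\circ\mu$ with $\gamma:\ell^\infty\to\ell^1$ a diagonal operator encoding $\Im v_k/|v_k+i|\in\ell^1$, and does not use $\Lambda$ at all; in the second case it uses $\Psi=\rho\circ\mu\circ\Lambda$ with $\mu:\ell^\infty\to\ell^1$ (thanks to $\sum(\Im v_k)^{-1/2}<\infty$) and $\Lambda:\ell^\infty\to\ell^\infty$ a contraction. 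Your attempt to port the $\rho:\ell^2\to\mc H$ / Riesz-sequence architecture of \thref{R7} to the real line, in a single unified argument and with real sampling points, does not work: you lose the vanishing of $\Theta_1$ at the sampling points, which is the lynchpin of the whole estimate.
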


The basic idea for the proof of this result is similar to the one which led to \thref{R7}, still there occur differences 
in the line of argumentation (see also \reref{R26}). We have to consider separately the cases whether the zeros $w_k$ of 
$\Theta:=E_1^{-1}E^\#$ do approach the real axis or are contained in some Stolz angle. 
\\

\begin{proofof}{\thref{R21}. Case 1: zeros approaching the real axis.}
	Assume that
	\[
		\liminf\limits_{k\to\infty,\Re w_k>0}\!\!\frac{\Im w_k}{|w_k|}=0.
        \]
	Let $(w_{k(n)})_{n\in\bb N}$ be a subsequence of $(w_k)_{k\in\bb N}$ with 
	$\Re w_{k(n)}>0$ and $\lim_{n\to\infty}|w_n|^{-1}\Im w_n=0$. 
	From this sequence we can extract yet another subsequence $(v_k)_{k\in\bb N}$ which satisfies 
	\begin{enumerate}[$(i)$]
	\item $|v_k+i|\leq 2\Re v_k$, $k\in\bb N$,
	\item $\Re v_{k+1}>2\Re v_k$, $k\in\bb N$, 
	\item $\sum_{n=1}^\infty\frac{\Im v_n}{|v_n+i|}\leq\frac 1{24}$. 
	\end{enumerate}
	Denote by $K_1(w,z)$ the reproducing kernel of $\mc L$, and set $\tilde K_1(w,.):=\|K_1(w,.)\|_{\mc H}^{-1}K_1(w,.)$. 
	The functions $\tilde K_1(v_k,.)$ are normalized and belong to the subspace $\mc L$. Hence 
	the map $\rho$ defined by \eqref{R15}, using $\tilde K_1(v_k,.)$ in place of $\tilde K(v_k,.)$, 
	maps the space $\ell^1$ contractively into $\mc L$. Set 
	\[
		\gamma_k:=\frac{\Im v_k}{|v_k+i|},\qquad k\in\bb N
		\,. 
	\]
	Then, by $(iii)$, the multiplication operator $\gamma:(a_k)_{k\in\bb N}\mapsto(\gamma_ka_k)_{k\in\bb N}$ maps 
	$\ell^\infty$ boundedly into $\ell^1$. Finally, the multiplication operator $\mu$ defined in \eqref{R22} maps 
	$\ell^\infty$ boundedly into itself. Altogether, we have 
	\[
		\Psi_1:=\rho\circ\gamma\circ\mu\in\mc B(\ell^\infty,\mc H)
		\,.
	\]
	Note that, similar as in \eqref{R23}, 
	\[
		\frac{\Psi_1(c)(z)}{E_1(z)}=\sum_{n=1}^\infty\frac{c_n\gamma_n}{\qu{v_n}-z},\qquad 
		c=(c_k)_{k\in\bb N}\in\ell^\infty
		\,.
	\]
	In order to estimate $\mf m$-norms let some point 
	$x\in\bb R\setminus(-\Re v_1,\Re v_1)$ be given. If $x\leq-\Re v_1\leq 0$, clearly 
	$|E_1^{-1}(x)\Psi_1(c)(x)|\leq\frac{\|c\|_\infty}{|x|}\sum_{n=1}^\infty\gamma_n\leq\frac{\|c\|_\infty}{24|x|}$. 
	Next, note that 
	\[
		\frac 1{|\qu{v_n}-x|}\leq
		\begin{cases}
			\frac 1{\Re v_n-x}\leq\frac 1x, &\ x\leq\frac 12\Re v_n,\\
			\frac 1{\Re v_k-\Re v_{k-1}}\leq\frac 2{\Re v_k}\leq\frac 4x, &\ n\leq k-1,\ \Re v_k\leq x\leq 2\Re v_k,\\
			\frac 1{x-\Re v_n}\leq\frac 2x, &\ 2\Re v_n\leq x,
		\end{cases}
	\]
	\[
		\frac 1{|v_n+i|}\leq\frac 1{\Re v_n}\leq
		\begin{cases}
			\frac 1x, &\ x\leq\Re v_n,\\
			\frac 2x, &\ x\leq 2\Re v_n.
		\end{cases}
	\]
	Using these inequalities, it follows that 
	\begin{multline*}
		\Big|\sum_{n=1}^\infty\frac{\gamma_n}{\qu{v_n}-x}\Big| \leq
		\sum_{n=1}^{k-1}\frac{\gamma_n}{|\qu{v_n}-x|}+
		\frac 1{|\qu{v_k}-x|}\frac{\Im v_k}{|v_k+i|}+\frac 1{|\qu{v_{k+1}}-x|}\frac{\Im v_{k+1}}{|v_{k+1}+i|} 
			\\
		+ \sum_{n=k+2}^\infty\frac{\gamma_n}{|\qu{v_n}-x|}\leq
		\frac 1{x}\Big(\frac 4{24}+2+1+\frac 1{24}\Big),\qquad \Re v_k\leq x\leq\Re v_{k+1}
		\,.
	\end{multline*}
	It follows that $\Psi_1\in\mc B(\ell^\infty,R_{\tilde{\mf m}}(\mc H))$ where $\tilde{\mf m}$ is the 
	majorant $\tilde{\mf m}(x):=|x|^{-1}|E(x)|$ defined on $\tilde D:=\bb R\setminus(-\Re v_1,\Re v_1)$. 
	
	We can also obtain an estimate from below. Set $x_k:=\Re v_k$, $k\in\bb N$, then 
	\[
		\Big|\frac{\Psi_1(c)(x_k)}{E_1(x_k)}\Big|\geq\frac{|c_k|}{|\qu{v_k}-x_k|}\frac{\Im v_k}{|v_k+i|}-
		\|c\|_\infty\sum_{\substack{n=1\\ n\neq k}}^\infty\frac 1{|\qu{v_n}-x|}\frac{\Im v_n}{|v_n+i|}\geq 
	\]
	\[
		\geq\frac{|c_k|}{2\Re v_k}-\frac{\|c\|_\infty}{x_k}\Big(\frac 4{24}+
		\underbrace{\frac{\Im v_{k+1}}{|v_{k+1}+i|}}_{\leq\frac 1{24}}+\frac 1{24}\Big)\geq
		\frac 1{x_k}\Big(\frac{|c_k|}2-\frac{\|c\|_\infty}4\Big)
		\,.
	\]
	Taking the supremum over all $k\in\bb N$ yields $\|\Psi_1(c)\|_{\tilde{\mf m}}\geq\frac 14\|c\|_\infty$. 
	Since $R_{\tilde{\mf m}}(\mc H)=R_{\mf m}(\mc H)$ and the norms $\|.\|_{\tilde{\mf m}}$ and $\|.\|_{\mf m}$ are equivalent, 
	we obtain that $\Psi_1$ maps $\ell^\infty$ bicontinuously into $\langle R_{\mf m}(\mc H)\cap\mc L,\|.\|_{\mf m}\rangle$. 
\end{proofof}

The case when $\liminf\limits_{k\to\infty,\Re w_k<0}\!\!\frac{\Im w_k}{|w_k|}=0$ is treated completely similar, and we will 
therefore omit explicit proof. 
\\

\begin{proofof}{\thref{R21}. Case 2: Zeros in a Stolz angle.}
	Assume that there exists $\alpha\in \big(0,\frac\pi 2\big)$ such that
	$w_n\in\Gamma_\alpha$, $n\in\bb N$.
	We can extract a subsequence $(v_k)_{k\in\bb N}$ of $(w_k)_{k\in\bb N}$ with 
	\begin{enumerate}[$(i)$]
	\item $\Im v_k\geq 1$, $\Im v_k<\Im v_{k+1}$, $k\in\bb N$, $\Im v_k\to\infty$, 
	\item $\sum_{n=1}^\infty(\Im v_k)^{-\frac 12}\leq 1$. 
	\end{enumerate}
	As seen by a straightforward induction, we may choose $(v_k)_{k\in\bb N}$ in such a way that moreover 
	\begin{enumerate}[$(i)$]
	\setcounter{enumi}{2}
	\item $\sum_{n=1}^k|v_n|\leq\frac{\sin\alpha}8(\Im v_k\Im v_{k+1})^{\frac 12}$,
	\item $\sum_{n=l}^k\frac 1{\Im v_k}<\frac 18(\Im v_{l-1}\Im v_l)^{-\frac 12}$, $l=2,\ldots,k$; and hence, letting 
		$k\to\infty$, 
		\[
			\sum_{n=l}^\infty\frac 1{\Im v_k}\leq\frac 18(\Im v_{l-1}\Im v_l)^{-\frac 12},\quad l\geq 2
			\,. 
		\]
	\end{enumerate}
	Again the map $\rho$ defined by \eqref{R15} with $\tilde K_1(v_k,.)$ 
	is a contraction of $\ell^1$ into $\mc L$. Let $\mu$ be the 
	multiplication operator defined in \eqref{R22}. By $(ii)$ we have $(\mu_k)_{k\in\bb N}\in\ell^1$, and hence 
	$\mu\in\mc B(\ell^\infty,\ell^1)$. Finally, let $\Lambda$ be the map defined in \leref{R10}, then  
	$\Lambda$ maps $\ell^\infty$ contractively into itself. Consider the map 
	\[
		\Psi:=\rho\circ\mu\circ\Lambda\in\mc B(\ell^\infty,\mc L)
		\,.
	\]
	We have to estimate $\mf m$-norms. Note that always $|\qu{v_n}-x|\geq\max\{|x|\sin\alpha,\Im v_n\}$. 
	Let $x\in\bb R\setminus(-\Im v_1,\Im v_1)$ be given, and choose $k\in\bb N$ such that 
	$\Im v_k\leq|x|\leq\Im v_{k+1}$. Then 
	\begin{multline*}
		\Big|\sum_{n=1}^{k-1}\frac{\Lambda(d)_n}{\qu{v_n}-x}\Big|=
			\Big|\sum_{n=1}^{k-1}\frac{\Lambda(d)_n}x\Big(-1+\frac{\qu{v_n}}{\qu{v_n}-x}\Big)\Big|\leq
			\\
		\shoveright{\leq\frac 1{|x|}\|\Lambda(d)\|_{\mc U}+\frac{\|\Lambda(d)\|_\infty}{|x|^2\sin\alpha}
			\hspace*{-12mm}\underbrace{\ \sum_{n=1}^{k-1}|v_n|}_{\leq\frac{\sin\alpha}8(\Im v_{k-1}\Im v_k)^{\frac 12}
			\leq\frac{|x|\sin\alpha}8}\hspace*{-12mm}\leq\frac 9{8|x|}\|d\|_\infty,
			}\\[-12mm]
	\end{multline*}
	\begin{multline*}
		\Big|\frac{\Lambda(d)_k}{\qu{v_k}-x}+\frac{\Lambda(d)_{k+1}}{\qu{v_{k+1}}-x}
		\Big|\leq\frac{2\|\Lambda(d)\|_\infty}{|x|\sin\alpha}\leq
		\frac{2}{|x|\sin\alpha}\|d\|_\infty,\\[-10mm]
	\end{multline*}
	\begin{multline*}
		\Big|\sum_{n=k+2}^\infty\frac{\Lambda(d)_n}{\qu{v_n}-x}\Big|\leq\|\Lambda(d)\|_\infty
		\hspace*{-9mm}\underbrace{\ \sum_{n=k+2}^\infty\frac 1{\Im v_n}}_{\leq\frac 18(\Im v_{k+1}
		\Im v_{k+2})^{-\frac 12}\leq\frac 1{8|x|}}\hspace*{-8mm}\leq\frac 1{8|x|}\|d\|_\infty.\\[-7mm]
	\end{multline*}
	Putting together these inequalities, it follows that 
	\begin{equation}\label{R24}
		\Big|\frac{\Psi(d)(x)}{E(x)}\Big|\leq\Big(\frac 54+\frac 2{\sin\alpha}\Big)\frac 1{|x|}\|d\|_\infty,\quad 
		|x|\geq\Im v_1
		\,.
	\end{equation}
	Consider the majorant $\tilde{\mf m}(x):=|x|^{-1}|E(x)|$ defined on $\tilde D:=\bb R\setminus(-\Im v_1,\Im v_1)$. 
	Then \eqref{R24} says that $\Psi\in\mc B(\ell^\infty,\langle R_{\tilde{\mf m}}(\mc H)\cap\mc L,\|.\|_{\tilde{\mf m}}\rangle)$. 
	Equivalence of norms implies 
	\[
		\Psi\in\mc B(\ell^\infty,\langle R_{\mf m}(\mc H)\cap\mc L,\|.\|_{\mf m}\rangle)
		\,.
	\]
	Let $n_l:=l^3$ as in \leref{R10}, and set $x_k:=(\Im v_{n_k}\Im v_{n_k+1})^{\frac 12}$. Then we can estimate 
	\begin{multline*}
		\Big|\sum_{n=1}^{n_k}\frac 1{x_k}\frac{\Lambda(d)_n\qu{v_n}}{\qu{v_n}-x_k}\Big|\leq
			\frac{\|\Lambda(d)\|_\infty}{|x_k|^2\sin\alpha}\sum_{n=1}^{n_k}|v_n|\leq 
			\\
		\shoveright{\leq\frac{\|\Lambda(d)\|_\infty}{|x_k|^2\sin\alpha}\frac{\sin\alpha}8
			\big(\Im v_{n_k}\Im v_{n_k+1}\big)^{\frac 12}\leq\frac 1{8|x_k|}\|d\|_\infty
			}\\[-7mm]
	\end{multline*}
	\begin{multline*}
		\Big|\sum_{n=n_k+1}^\infty\frac{\Lambda(d)_n}{\qu{v_n}-x}\Big|\leq\|\Lambda(d)\|_\infty
		\sum_{n=n_k+1}^\infty\frac 1{\Im v_n}\leq\frac 1{8|x_k|}\|d\|_\infty\\[-7mm]
	\end{multline*}
	It follows that 
	\[
		\Big|\frac{x_k\Psi(d)(x_k)}{E(x_k)}\Big|\geq
		\Big|\underbrace{\sum_{n=1}^{n_k}\Lambda(d)_n}_{=|d_k|}\Big|-\frac{\|d\|_\infty}4
		\,,
	\]
	and, taking the supremum over all $k\in\bb N$, 
	\[
		\|\Psi(d)\|_{\tilde{\mf m}}\geq\frac 34\|d\|_\infty
		\,.
	\]
	Again by equivalence of norms, this shows that $\Psi$ maps $\ell^\infty$ bicontinuously into 
	$\langle R_{\mf m}(\mc H)\cap\mc L,\|.\|_{\mf m}\rangle$. 
\end{proofof}

\begin{remark}{R25}
	With similar arguments as used in \thref{R21} for the case when the zeros of $\Theta$ approach 
	$\bb R$, one can prove the following statement: 
	\textit{Let $\mc H$ be a de~Branges space and let $\mc L=\mc H(E_1)\in\Sub^*\mc H$, $\dim\mc L=\infty$. 
	Moreover, let $D\subset\bb C^+\cup\bb R$ be such that 
	\[
		\limsup\limits_{\substack{|z|\to\infty\\ z\in D}} \frac{\Im z}{|z|}=0
		\,.
	\]
	Assume that there exists a sequence $(w_k)_{k\in\bb N}$, $w_k\in\bb C^+$, with 
	\[
		\lim_{k\to\infty}\frac{\Im w_k}{|w_k|}=0,\quad \sup_{n\in\bb N}|\Theta(w_n)|<1
		\,.
	\]
	Then there exists a bicontinuous embedding of $\ell^\infty$ into 
	$R_{\mf m}(\mc H)\cap\mc L$.}
	
	We will not go into details. 
\end{remark}

\begin{remark}{R26}
	It is interesting to analyze the construction of the respective embeddings in the proofs of 
	\thref{R7}, and \thref{R21}, Case 1 / Case 2:
	\begin{center}
	\framebox{
	\begin{tabular}{ll}
		\thref{R7}: & $\xymatrix{\ell^\infty \ar[r]^\Lambda & \ell^2 \ar[r]^\mu & \ell^2 \ar[r]^\rho & \mc H}$\\
		\thref{R21}, Case 2: & $\xymatrix{\ell^\infty \ar[r]^\Lambda & \ell^\infty \ar[r]^\mu & \ell^1 \ar[r]^\rho & \mc H}$
			\\[2mm] \hline
		\thref{R21}, Case 1: \rule{0pt}{6mm} & 
			$\xymatrix{\ell^\infty \ar[r]^\mu & \ell^\infty \ar[r]^\gamma & \ell^1 \ar[r]^\rho & \mc H}$
	\end{tabular}
	}
	\end{center}
	In \thref{R21}, Case 2, the rough argument that $\rho\in\mc B(\ell^1,\mc H)$ was sufficient, whereas in \thref{R7} we had to 
	use the much finer argument that we can extract a Riesz sequence of normalized kernels. This is necessary, since in \thref{R7} 
	we only assume that $\Im w_n$ is bounded away from zero, and not that the points $w_n$ are contained in some Stolz angle. 
\end{remark}

\section{The majorant $\mf m^\flat$}

In this section we investigate the question in how many ways a given dB-subspace can be realized 
by majorization (provided it can be realized at all). In particular, we ask for "small" majorants which do the job. 
We will also view representability by different majorants from a refined viewpoint, requiring instead of 
$\mc R_{\mf m_1}(\mc H)=\mc R_{\mf m_2}(\mc H)$ that $R_{\mf m_1}(\mc H)=R_{\mf m_2}(\mc H)$ or even 
$B_{\mf m_1}(\mc H)=B_{\mf m_2}(\mc H)$. 

To start with, let us show by examples that in 
general there will actually exist many majorants $\mf m$ generating the same 
dB-subspace $\mc R_{\mf m}(\mc H)$. 

\begin{example}{R27}
	Consider a Paley--Wiener space $\PW_a$, $a>0$, and majorization on 
	$D:=\bb R$. For every $\mf m\in\Adm_{\bb R}\PW_a$ which is 
	separated from zero on each compact interval, we have 
	$\mc R_{\mf m}(\PW_a)=\PW_a$, cf.\ \cite[Corollary 3.12]{baranov.woracek:dbmaj}. 
	However, for each $\alpha\in (0,1)$, the function 
	\[
		\mf m(x):=\exp(-|x|^\alpha), \qquad x\in\bb R
		\,,
	\]
	is an admissible majorant, cf.\ \cite[Example 2.14]{baranov.woracek:dbmaj}. 
\end{example}

This example relies on the fact that no proper dB-subspaces of $\PW_a$ can be obtained by majorization 
on the real line. The following example is of a different nature. 

\begin{example}{R28}
	Let $\mc H$ be a de~Branges space and let $\mc L\in\Sub^*\mc H$, $\dim\mc L=\infty$, 
	be such that $\mc L=\clos_{\mc H}\bigcup\{\mc K\in\Sub^*\mc H:\,\mc K\subsetneq\mc L\}$. 
	We will consider majorization on $D:=i[1,\infty)$, so we know that $\nabla_{\!\mc L}|_D$ belongs to 
	$\Adm_D\mc H$ and $\mc R_{\nabla_{\!\mc L}|_D}(\mc H)=\mc L$. 
	
	By the Baire Category Theorem we have $\mc L\neq\bigcup\{\mc K\in\Sub^*\mc H:\,\mc K\subsetneq\mc L\}$. 
	Note here that this union is actually equal to some at most countable union. Moreover, 
	each dB-subspace is invariant with respect to $F\mapsto F^\#$. Hence we can choose 
	$F\in\mc L\setminus\bigcup\{\mc K\in\Sub^*\mc H:\,\mc K\subsetneq\mc L\}$, $\|F\|=1$, with $F=F^\#$. 
	
	It follows from a general argument that the function $F$ has infinitely many zeros: 
	Assume on the contrary that $w_1,\ldots,w_n$ are 
	all the zeros of $F$ (listed according to their multiplicities). Then the function 
	\[
		\tilde F(z):=F(z)\prod_{k=1}^n (z-w_k)^{-1}
	\]
	belongs to $\mc H$, satisfies $\tilde F^\#=\tilde F$, and has no zeros. 
	Thus $\spn\{\tilde F\}\in\Sub^*\mc H$. Hence also each of the 
	spaces 
	\[
		\mc L_m:=\big\{p \tilde F:\,p\in\bb C[z],
		\deg p\leq m\big\}\cap\mc H,\qquad m\in\bb N,\ m\le n,
	\]
	belongs to $\Sub^*\mc H$. By the construction of $\tilde F$, 
	we have $F\in\mc L_n$. Thus $\mc L_n$ is not contained 
	in any of the spaces $\mc K\in\Sub^*\mc H$, $\mc K\subsetneq\mc L$, 
	and, by de~Branges' Ordering Theorem, must therefore contain each of them. 
	By our assumption, $\bigcup_{\mc K\subsetneq\mc L}\mc K$ is dense in $\mc L$, 
	and it follows that also $\mc L\subseteq\mc L_n$. 
	This contradicts the assumption that $\dim\mc L=\infty$. 

	Denote the sequence of zeros of $F$ which lie in $\bb C^+\cup\bb R$ 
	by $(w_n)_{n\in\bb N}$, and let $B(z)$ be the Blaschke product 
	build from the zeros $w_n$ with positive imaginary part. Define 
	\[
		\mf m_k(z):=\frac{|F(z)|}{|z+i|^k|B(z)|},\qquad 
		z\in D,\ k\in\bb N_0 \,.
	\]
	Clearly, we have $\mf m_k>0$ and $\mf m_0\geq\mf m_1\geq\mf m_2\geq\ldots$. Moreover, for each $k\in\bb N_0$, 
	\[
		F_k(z):=\frac{F(z)}{\prod_{n=1}^k(z-\overline w_n)
		\cdot B(z)}\in R_{\mf m_k}(\mc H)\setminus
		R_{\mf m_{k+1}}(\mc H)
		\,.
	\]
	We see that $\mf m_k\in\Adm_D\mc H$. The same argument as used in \cite[Lemma 3.5, $(i)$]{baranov.woracek:fdimmaj} gives 
	\[
		\dim\mc R_{\mf m_k}(\mc H)\big/\mc R_{\mf m_{k-1}}(\mc H)\leq 1
		\,.
	\]
	Since each dB-subspace is closed with respect to multiplication by
	Blaschke products, the function $F_0$ does not belong to any 
	of the spaces $\mc K\in\Sub^*\mc H$, $\mc K\subsetneq\mc L$. Hence, 
	$\mc R_{\mf m_0}(\mc H)\supseteq\mc L$. On the other hand, $F_0\in\mc L$ and thus $\mf m_0\lesssim\nabla_{\!\mc L}$. 
	This implies that $\mc R_{\mf m_0}(\mc H)=\mc L$, and altogether we obtain $\mc R_{\mf m_k}(\mc H)=\mc L$, 
	$k\in\bb N_0$. 
	
	We have obtained a decreasing family $\mf m_k$ of majorants with 
	\[
		\mc R_{\mf m_k}(\mc H)=\mc L\text{ but }
		R_{\mf m_{k+1}}(\mc H)\subsetneq R_{\mf m_k}(\mc H),\qquad k\in\bb N_0
		\,.
	\]
	Let us remark moreover that $\lim_{k\to\infty}\mf m_k(z)=0$, $z\in D$, and thus also 
	\[
		\bigcap_{k\in\bb N}B_{\mf m_k}(\mc H)=\{0\}
		\,.
	\]
\end{example}

In particular, these examples show that the space $R_{\mf m}(\mc H)$ might be very small, despite the fact that 
$\mc R_{\mf m}(\mc H)$ is always the same. 

Let $\mf m\in\Adm\mc H$ be given. Among all those majorants $\mf m_1$ for which even 
$B_{\mf m_1}(\mc H)=B_{\mf m}(\mc H)$ there is a natural "sharp" majorant. 

\begin{definition}{R29}
	Let $\mc H$ be a de~Branges space and $\mf m\in\Adm\mc H$. 
	Define a function $\mf m^\flat:\bb C^+\cup\bb R\to[0,\infty)$ by 
	\[
		\mf m^\flat(z):=\sup\big\{|F(z)|:\,F\in B_{\mf m}(\mc H)\big\},\quad z\in\bb C^+\cup\bb R
		\,.
	\]
\end{definition}

\begin{lemma}{R30}
	Let $\mc H$ be a de~Branges space, $D\subseteq\bb C^+\cup\bb R$, and $\mf m\in\Adm_D\mc H$. 
	Then $\mf m^\flat\in\Adm\mc H$ and 
	\[
		B_{\mf m^\flat}(\mc H)=B_{\mf m^\flat|_D}(\mc H)=B_{\mf m}(\mc H)
		\,.
	\]
	Moreover, $\mf m^\flat|_D\leq\mf m$ and $\mf m^{\flat\flat}=\mf m^\flat$. 
\end{lemma}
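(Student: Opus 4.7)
The plan is to handle the four assertions in the order: $\mf m^\flat|_D\le\mf m$, then $B_{\mf m^\flat}(\mc H)=B_{\mf m^\flat|_D}(\mc H)=B_{\mf m}(\mc H)$, then $\mf m^{\flat\flat}=\mf m^\flat$, and finally admissibility. Before any of this I would note that $\mf m^\flat$ takes only finite values: for every $F\in B_{\mf m}(\mc H)$ we have $|F(z)|\le \|F\|_{\mc H}\nabla_{\!\mc H}(z)\le\nabla_{\!\mc H}(z)$, so $\mf m^\flat(z)\le\nabla_{\!\mc H}(z)<\infty$. The bound $\mf m^\flat|_D\le\mf m$ then follows at once from the very definition of $B_{\mf m}(\mc H)$: for $z\in D$ and $F\in B_{\mf m}(\mc H)$ one has $|F(z)|\le\mf m(z)$, and taking the supremum over such $F$ gives $\mf m^\flat(z)\le\mf m(z)$.

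The heart of the argument is the ball equality. The key observation is that the involution $F\mapsto F^\#$ preserves $B_{\mf m}(\mc H)$: condition (dB2) gives $\|F^\#\|_{\mc H}=\|F\|_{\mc H}$, and the defining inequalities of $B_{\mf m}(\mc H)$ are symmetric in $F$ and $F^\#$. Consequently, for any fixed $F\in B_{\mf m}(\mc H)$ and any $z\in\bb C^+\cup\bb R$, the functions $F$ and $F^\#$ are both competitors in the supremum defining $\mf m^\flat(z)$, which yields $|F(z)|,|F^\#(z)|\le\mf m^\flat(z)$ on \emph{all} of $\bb C^+\cup\bb R$. Combined with $\|F\|_{\mc H}\le 1$, this shows $B_{\mf m}(\mc H)\subseteq B_{\mf m^\flat}(\mc H)$. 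For the reverse inclusion, any $F\in B_{\mf m^\flat}(\mc H)$ satisfies $|F(z)|,|F^\#(z)|\le\mf m^\flat(z)\le\mf m(z)$ for $z\in D$, so $F\in B_{\mf m}(\mc H)$.

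To complete the triple equality I would sandwich $B_{\mf m^\flat|_D}(\mc H)$ between the two others. Shrinking the domain of majorization from $\bb C^+\cup\bb R$ to $D$ only weakens the constraint, so $B_{\mf m^\flat}(\mc H)\subseteq B_{\mf m^\flat|_D}(\mc H)$. On the other hand, $\mf m^\flat|_D\le\mf m$ forces $B_{\mf m^\flat|_D}(\mc H)\subseteq B_{\mf m}(\mc H)$. The chain closes up by the previous paragraph. The idempotence $\mf m^{\flat\flat}=\mf m^\flat$ is then immediate: by definition $\mf m^{\flat\flat}(z)=\sup\{|F(z)|:F\in B_{\mf m^\flat}(\mc H)\}$, and the set $B_{\mf m^\flat}(\mc H)$ is the same as $B_{\mf m}(\mc H)$.

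The remaining point is $\mf m^\flat\in\Adm\mc H$, and this is where I expect the only real technical friction. Since $B_{\mf m^\flat}(\mc H)=B_{\mf m}(\mc H)\neq\{0\}$, certainly $R_{\mf m^\flat}(\mc H)\neq\{0\}$. The zero-divisor condition required by \cite[Definition 3.3]{baranov.woracek:spmaj} must be checked for $\mf m^\flat$ on $\bb C^+\cup\bb R$; here I would exploit the representation $\mf m^\flat(z)=\sup_{F\in B_{\mf m}(\mc H)}|F(z)|$ together with the bound $\mf m^\flat\le\nabla_{\!\mc H}$ and the standing hypothesis that $\mc H$ has no common zero. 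The only subtlety is that $\mf m^\flat$ need not satisfy the continuity property \eqref{R5}, but the definition of $\Adm\mc H$ does not require \eqref{R5}; that condition was imposed only in \reref{R4}. Thus admissibility reduces to the two easily verified points above, and the lemma is complete.
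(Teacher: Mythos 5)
Your proposal is correct and follows essentially the same route as the paper: establish $\mf m^\flat|_D\leq\mf m$, chain the three ball inclusions together, and then read off idempotence from the ball equality. The one addition you make — explicitly noting that $F\mapsto F^\#$ preserves $B_{\mf m}(\mc H)$, so that $F\in B_{\mf m}(\mc H)$ yields both $|F(z)|\leq\mf m^\flat(z)$ \emph{and} $|F^\#(z)|\leq\mf m^\flat(z)$ on all of $\bb C^+\cup\bb R$ — fills in a step the paper passes over silently, and is worth keeping. The weakest part of your write-up is the admissibility claim $\mf m^\flat\in\Adm\mc H$, which you gesture at rather than verify (the zero-divisor regularity condition for $\mf m^\flat$ in particular is only sketched); the paper itself does not address this point either, so this is not a deviation from the source, but in a polished version you should spell it out, noting that $\mf m^\flat\leq\nabla_{\mc H}$ and that the zero-divisor of $\mf m^\flat$ is controlled via $\mf d_{\mc H}$ as in \prref{R37}.
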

\begin{proof}
	The inclusion $B_{\mf m^\flat}(\mc H)\subseteq B_{\mf m^\flat|_D}(\mc H)$
	is trivial. By the definition of $\mf m^\flat$, we have 
	$\mf m^\flat|_D\leq\mf m$, and therefore $B_{\mf m^\flat|_D}(\mc H)\subseteq B_{\mf m}(\mc H)$. 
	Let $F\in B_{\mf m}(\mc H)$ be given. Then $|F(z)|\leq\mf m^\flat(z)$, $z\in\bb C^+\cup\bb R$, and hence 
	$F\in B_{\mf m^\flat}(\mc H)$. 
	Finally, we have 
	\[
		\mf m^{\flat\flat}(z)=\sup_{F\in B_{\mf m^\flat}(\mc H)}|F(z)|=\sup_{F\in B_{\mf m}(\mc H)}|F(z)|=\mf m^\flat(z)
		\,.
	\]
\end{proof}

\begin{remark}{R36}
	We see that, given $\mf m\in\Adm_D\mc H$, the majorant $\mf m^\flat$ is the smallest among all 
	majorants $\mf m_1\in\Adm\mc H$ with $B_{\mf m_1}(\mc H)=B_{\mf m}(\mc H)$. 
\end{remark}

\leref{R30} can often be used to reduce considerations to majorants of the form $\mf m^\flat$. 
In view of this, it is worth mentioning that the majorant $\mf m^\flat$ 
is always fairly smooth and preserves real zeros as well as exponential growth. 

\begin{proposition}{R37}
	Let $\mc H$ be a de~Branges space, $D\subseteq\bb C^+\cup\bb R$, 
	and $\mf m\in\Adm_D\mc H$. Then $\mf m^\flat$ is continuous on $\bb C^+\cup\bb R$ 
	and $\log\mf m^\flat$ is subharmonic on $\bb C^+$. Moreover, we have 
	$\mf d_{\mf m^\flat}=\max\{\mf d_{\mf m},\mf d_{\mc H}\}$ and 
	$\mt_{\mc H} \mf m^\flat = \mt_{\mc H} \mc R_{\mf m}(\mc H)$. 
\end{proposition}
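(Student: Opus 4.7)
The plan is to prove the four properties in sequence, with the $\tau_{lu}$-compactness of $B_{\mf m}(\mc H)$ as the common technical backbone. First I would verify that $B_{\mf m}(\mc H)$ is $\|.\|_{\mc H}$-bounded (by $1$), convex, and $\|.\|_{\mc H}$-closed; closedness holds because $\|.\|_{\mc H}$-convergence forces pointwise convergence, so the defining inequalities $|F(z)|,|F^\#(z)|\le\mf m(z)$ on $D$ pass to limits. Thus $B_{\mf m}(\mc H)$ is $\tau_w$-compact, and by \prref{R3}$(i)$ the topologies $\tau_w$ and $\tau_{lu}$ agree on it; in particular the supremum in the definition of $\mf m^\flat(z)$ is attained for every $z\in\bb C^+\cup\bb R$.

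Continuity then comes from standard compactness arguments. For upper semicontinuity at $z$, take $z_n\to z$ and $F_n\in B_{\mf m}(\mc H)$ with $|F_n(z_n)|=\mf m^\flat(z_n)$; extract a $\tau_{lu}$-convergent subsequence $F_n\to F^\star\in B_{\mf m}(\mc H)$, then $|F_n(z_n)|\to|F^\star(z)|\le\mf m^\flat(z)$. Lower semicontinuity follows from $|F^\star(z_n)|\to|F^\star(z)|=\mf m^\flat(z)$ for a maximizer $F^\star$ at $z$. For subharmonicity of $\log\mf m^\flat$ on $\bb C^+$, each $\log|F|$ with $F\in B_{\mf m}(\mc H)$ is subharmonic, so for any disk $\overline{D(z_0,r)}\subset\bb C^+$,
\[
	\log|F(z_0)|\le\frac 1{2\pi}\int_0^{2\pi}\log|F(z_0+re^{i\theta})|\,d\theta\le\frac 1{2\pi}\int_0^{2\pi}\log\mf m^\flat(z_0+re^{i\theta})\,d\theta.
\]
Taking the supremum on the left gives the sub-mean inequality for $\log\mf m^\flat$, which together with the already-proved continuity yields subharmonicity.

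For the zero divisor, the direction $\mf d_{\mf m^\flat}\ge\max\{\mf d_{\mf m},\mf d_{\mc H}\}$ is immediate: $\mf m^\flat|_D\le\mf m$ (\leref{R30}) gives $\mf d_{\mf m^\flat}\ge\mf d_{\mf m}$ on $D$, and every $F\in B_{\mf m}(\mc H)\subseteq\mc H$ satisfies $\mf d_F\ge\mf d_{\mc H}$. For the reverse, I would use that the dB-subspace $\mc R_{\mf m}(\mc H)$ has zero divisor $\max\{\mf d_{\mf m},\mf d_{\mc H}\}$ (cf.\ \cite{baranov.woracek:spmaj}); since $R_{\mf m}(\mc H)$ is $\|.\|_{\mc H}$-dense in $\mc R_{\mf m}(\mc H)$ and the subspace of functions vanishing to strictly higher order at $w$ is closed and proper, there exists $F_0\in R_{\mf m}(\mc H)$ with $\mf d_{F_0}(w)=\max\{\mf d_{\mf m}(w),\mf d_{\mc H}(w)\}$. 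Rescaling $F_0/C\in B_{\mf m}(\mc H)$ yields $\mf m^\flat\ge|F_0|/C$ and hence $\mf d_{\mf m^\flat}(w)\le\mf d_{F_0}(w)$.

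For the mean type, set $\alpha:=\mt_{\mc H}\mc R_{\mf m}(\mc H)$. By \prref{R2} there is $F\in R_{\mf m}(\mc H)$ with $\mt_{\mc H}F=\alpha$; rescaling $F$ into $B_{\mf m}(\mc H)$ gives $\mt_{\mc H}\mf m^\flat\ge\alpha$. For the reverse, \eqref{R43} shows $B_{\mf m}(\mc H)\subseteq\mc H_{(\alpha)}$; I would invoke the reproducing kernel $K_\alpha$ of the dB-subspace $\mc H_{(\alpha)}$ of $\mc H$ (whose structure is furnished by \cite{kaltenbaeck.woracek:growth}) to produce the uniform pointwise majorant $|F(z)|\le K_\alpha(z,z)^{1/2}$ valid for every $F\in B_{\mf m}(\mc H)$, whose mean type equals $\alpha$, so that taking the supremum over $F$ gives $\mt_{\mc H}\mf m^\flat\le\alpha$. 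The principal obstacle in the whole proof lies precisely here: one must promote the uniform $\|.\|_{\mc H}$-bound together with membership in $\mc H_{(\alpha)}$ into a genuinely uniform exponential growth estimate along $i[1,\infty)$. A secondary subtlety is the exchange of supremum and integral in the subharmonicity step, which works only thanks to the one-sided bound $\log|F|\le\log\mf m^\flat$ and the local upper boundedness $\mf m^\flat\le\nabla_{\!\mc H}$.
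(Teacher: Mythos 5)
Your approach tracks the paper's proof closely in spirit, but two of your four parts rest on observations that the paper handles by a sharper, uniform argument, and you have one genuine gap.

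For continuity, the paper argues more directly from equicontinuity of the normal family $B_{\mf m}(\mc H)$ (a supremum of an equicontinuous family of continuous functions is automatically continuous), whereas you pass through compactness and extraction of subsequences together with the attainment of the supremum via \prref{R3}$(i)$. Both are correct. The subharmonicity argument is identical to the paper's.

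The zero divisor step is where the gap lies. You write that the inequality $\mf d_{\mf m^\flat}\geq\mf d_{\mc H}$ follows since ``every $F\in B_{\mf m}(\mc H)$ satisfies $\mf d_F\geq\mf d_{\mc H}$.'' This is a pointwise statement about each individual $F$; it does not yield a statement about the supremum $\mf m^\flat=\sup_F|F|$. Each $|F(z)|/|z-w|^n\to 0$ for $n<\mf d_{\mc H}(w)$, but without a uniform estimate the supremum need not tend to zero. The paper closes this uniformity gap by observing that the family $\{F(z)/(z-w)^n:F\in B_{\mf m}(\mc H)\}$ is again a normal family, hence equicontinuous at $w$, where its members all extend continuously by zero; equicontinuity then gives $|z-w|^{-n}\mf m^\flat(z)\leq\epsilon$ for $|z-w|\leq r$. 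An equally quick repair is to note that $\mf m^\flat\leq\nabla_{\!\mc H}$ and $\nabla_{\!\mc H}=K(\cdot,\cdot)^{1/2}$ vanishes to order $\mf d_{\mc H}(w)$ at $w$. For the reverse direction your density argument inside $\mc R_{\mf m}(\mc H)$ is a perfectly valid alternative; the paper instead runs the chain $\mf d_{\mf m^\flat}=\max\{\mf d_{\mf m^\flat},\mf d_{\mc H}\}=\mf d_{\mc R_{\mf m^\flat}(\mc H)}=\mf d_{\mc R_{\mf m}(\mc H)}=\max\{\mf d_{\mf m},\mf d_{\mc H}\}$, invoking \leref{R30} and \cite[(3.1)]{baranov.woracek:spmaj} twice, which is slicker and avoids having to produce an explicit extremal $F_0$.

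For the mean type, the inequality $\mt_{\mc H}\mf m^\flat\geq\mt_{\mc H}\mc R_{\mf m}(\mc H)$ does not need the Baire-category strength of \prref{R2}; \eqref{R43} together with the identity $\sup_{F\in R_{\mf m}(\mc H)}\mt_{\mc H}F=\sup_{F\in B_{\mf m}(\mc H)}\mt_{\mc H}F$ (normalizing any $F$ into the ball) already gives it, since $|F|\leq\mf m^\flat$ on $B_{\mf m}(\mc H)$. For the upper bound, you correctly identify the technical crux, but your plan invokes the reproducing kernel of $\mc H_{(\alpha)}$, which requires a further appeal to \cite{kaltenbaeck.woracek:growth} to justify that $\mc H_{(\alpha)}$ is a dB-subspace and that its $\nabla$-function has the right mean type. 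The paper sidesteps this by using the bound $\mf m^\flat\leq\nabla_{\!\mc R_{\mf m}(\mc H)}$ (which holds because $B_{\mf m}(\mc H)$ is inside the $\|.\|_{\mc H}$-unit ball of $\mc R_{\mf m}(\mc H)$) and then invoking \cite[(3.1)]{baranov.woracek:spmaj}, which directly relates $\mt_{\mc H}\nabla_{\!\mc L}$ to $\mt_{\mc H}\mc L$. Your route would work once those facts about $\mc H_{(\alpha)}$ are furnished, but it imports more machinery than necessary.
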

\begin{proof}
	The unit ball of $\mc H$ is a locally bounded, and thus normal, family of entire functions. This shows that the subset 
	$B_{\mf m}(\mc H)$ is also a normal family of entire functions, and thus equicontinuous. 
	This implies that $\mf m^\flat$ is continuous. 
	The function $\log\mf m^\flat$ is the supremum
	of the subharmonic functions $\log|F(z)|$, $F\in B_{\mf m}(\mc H)$. Since $\log\mf m^\flat$ is 
	continuous on $\bb C^+$, this implies that $\log\mf m^\flat$ is subharmonic on $\bb C^+$. 
	
	Let $w\in\bb C$, $n\in\bb N_0$, be such that $n<\mf d_{\mc H}(w)$. Then
	\[
		\Big\{\frac{F(z)}{(z-w)^n}:\,F\in B_{\mf m}(\mc H)\Big\}
	\]
	is a normal family of entire functions, and hence equicontinuous. Thus, given $\epsilon>0$, there
	exists $r>0$ such that
	\[
		\Big|\frac{F(z)}{(z-w)^n}\Big|\leq\epsilon,\quad |z-w|\leq r, \ F\in B_{\mf m}(\mc H)
		\,.
	\]
	This implies that also $|z-w|^{-n}\mf m^\flat(z)\leq\epsilon$ for $0<|z-w|\leq r$,
	and hence that $n<\mf d_{\mf m^\flat}(w)$. We conclude that $\mf d_{\mf m^\flat}(w)\geq\mf d_{\mc H}(w)$, $w\in\bb C$. 
	Now we obtain from \cite[(3.1)]{baranov.woracek:spmaj} that
	\[ 
		\mf d_{\mf m^\flat} = \max\{\mf d_{\mf m^\flat},\mf d_{\mc H}\}=
		\mf d_{\mc R_{\mf m^\flat}(\mc H)}= \mf d_{\mc R_{\mf m}(\mc H)}=\max\{\mf d_{\mf m},\mf d_{\mc H}\}
		\,.
	\]
	Since $B_{\mf m}(\mc H)$ is contained in the unit ball (with respect to the norm of $\mc H$) 
	of $\mc R_{\mf m}(\mc H)$, we have $\mf m^\flat\leq\nabla_{\!\mc R_{\mf m}(\mc H)}$. Hence, 
	by \cite[(3.1)]{baranov.woracek:spmaj}, $\mt_{\mc H} \mf m^\flat \leq \mt_{\mc H} \mc R_{\mf m}(\mc H)$. 
	Conversely, by \eqref{R43}, 
	\[
		\mt_{\mc H}\mc R_{\mf m}(\mc H) =
		\sup_{F\in R_{\mf m}(\mc H)}\mt_{\mc H} F 
		= \sup_{F\in B_{\mf m}(\mc H)}\mt_{\mc H} F 
		\leq \mt_{\mc H} \mf m^\flat
		\,.
	\]
	Thus $\mt_{\mc H} \mf m^\flat \geq \mt_{\mc H} \mc R_{\mf m}(\mc H)$.
\end{proof}

The next result says that equality of two spaces $R_{\mf m_1}(\mc H)$ and $R_{\mf m_2}(\mc H)$ can be 
characterized via $\mf m_1^\flat$ and $\mf m_2^\flat$. It is again a consequence of the completeness of $R_{\mf m}(\mc H)$. 

\begin{proposition}{R31}
	Let $\mc H$ be a de~Branges space, $D\subseteq\bb C^+\cup\bb R$, and $\mf m_1,\mf m_2\in\Adm_D\mc H$. 
	Then the following equivalences hold: 
	\begin{equation}\label{R32}
		\begin{aligned}
		R_{\mf m_1}(\mc H)\subseteq R_{\mf m_2}(\mc H)
		& \ \Longleftrightarrow \ 
		\exists\,\lambda>0:\,B_{\mf m_1}(\mc H)\subseteq \lambda B_{\mf m_2}(\mc H) \\
		& \ \Longleftrightarrow \ 
		\mf m_1^\flat\lesssim\mf m_2^\flat;
		\end{aligned}
	\end{equation}
	\begin{equation}\label{R33}
		\begin{aligned}
		R_{\mf m_1}(\mc H)=R_{\mf m_2}(\mc H)
			& \ \Longleftrightarrow \ 
			R_{\mf m_1}(\mc H)=R_{\mf m_2}(\mc H)\text{ and }\|.\|_{\mf m_1}\asymp\|.\|_{\mf m_2} \\
			& \ \Longleftrightarrow \ 
			\exists\,\lambda_1,\lambda_2>0:\,\lambda_1 B_{\mf m_1}(\mc H)\subseteq B_{\mf m_2}(\mc H)
			\subseteq\lambda_2 B_{\mf m_1}(\mc H) \\
			& \ \Longleftrightarrow \ 
 		\mf m_1^\flat\asymp\mf m_2^\flat;
		\end{aligned}
	\end{equation}
	\begin{equation}\label{R34}
		\begin{aligned}
		B_{\mf m_1}(\mc H)=B_{\mf m_2}(\mc H)
			& \ \Longleftrightarrow \ 
			R_{\mf m_1}(\mc H)=R_{\mf m_2}(\mc H)\text{ and }\|.\|_{\mf m_1}=\|.\|_{\mf m_2} \\
			& \ \Longleftrightarrow \ 
		\mf m_1^\flat=\mf m_2^\flat.
		\end{aligned}
	\end{equation}
\end{proposition}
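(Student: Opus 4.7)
The plan is to establish \eqref{R32} first, then deduce \eqref{R33} and \eqref{R34} from it together with \leref{R30}. Three of the four implications in \eqref{R32} are essentially formal. A ball inclusion $B_{\mf m_1}(\mc H)\subseteq\lambda B_{\mf m_2}(\mc H)$ trivially gives the $R$-space inclusion (for nonzero $F\in R_{\mf m_1}(\mc H)$, apply it to $F/\|F\|_{\mf m_1}$), and it yields $\mf m_1^\flat\leq\lambda\mf m_2^\flat$ by taking the supremum pointwise in the definition of $\mf m^\flat$. Conversely, suppose $\mf m_1^\flat\leq C\mf m_2^\flat$ on $\bb C^+\cup\bb R$. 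Any $F\in B_{\mf m_1}(\mc H)$ satisfies $|F(z)|\leq\mf m_1^\flat(z)\leq C\mf m_2^\flat(z)$ and $\|F\|_{\mc H}\leq 1$, so with $\lambda:=\max\{C,1\}$ the function $F/\lambda$ belongs to $B_{\mf m_2^\flat}(\mc H)=B_{\mf m_2}(\mc H)$ by \leref{R30}.

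The main step is to show that the $R$-space inclusion $R_{\mf m_1}(\mc H)\subseteq R_{\mf m_2}(\mc H)$ already forces a ball inclusion. Here I would invoke the Closed Graph Theorem for the inclusion map $\iota:\langle R_{\mf m_1}(\mc H),\|.\|_{\mf m_1}\rangle\to\langle R_{\mf m_2}(\mc H),\|.\|_{\mf m_2}\rangle$; both endpoints are Banach spaces by \prref{R1}. To verify that the graph is closed, suppose $F_n\to F$ in $\|.\|_{\mf m_1}$ and $\iota(F_n)=F_n\to G$ in $\|.\|_{\mf m_2}$. Since both norms dominate $\|.\|_{\mc H}$, both sequences converge to their respective limits in the Hilbert space norm as well, which forces $F=G$. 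Boundedness of $\iota$ then reads $\|F\|_{\mf m_2}\leq\lambda\|F\|_{\mf m_1}$ for some $\lambda>0$, which is exactly $B_{\mf m_1}(\mc H)\subseteq\lambda B_{\mf m_2}(\mc H)$.

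For \eqref{R33} I would simply apply \eqref{R32} with the roles of $\mf m_1$ and $\mf m_2$ interchanged: equality of the $R$-spaces produces two-sided ball inclusions, which are in turn equivalent to $\|.\|_{\mf m_1}\asymp\|.\|_{\mf m_2}$ on the common space via the Minkowski functional identity $\|F\|_{\mf m_i}=\inf\{\lambda>0:F/\lambda\in B_{\mf m_i}(\mc H)\}$ (legitimate since each $B_{\mf m_i}(\mc H)$ is convex and balanced), and equivalent to $\mf m_1^\flat\asymp\mf m_2^\flat$. For \eqref{R34}, equality of balls yields equal norms (again via Minkowski functionals) and equal $R$-spaces, while the converse is obvious. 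The equivalence with $\mf m_1^\flat=\mf m_2^\flat$ is then immediate: if the balls agree, the defining pointwise suprema agree; conversely, if $\mf m_1^\flat=\mf m_2^\flat$, \leref{R30} gives $B_{\mf m_1}(\mc H)=B_{\mf m_1^\flat}(\mc H)=B_{\mf m_2^\flat}(\mc H)=B_{\mf m_2}(\mc H)$.

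The only nontrivial ingredient is the Closed Graph step, which delivers the automatic norm estimate from a purely set-theoretic inclusion; everything else is bookkeeping resting on \prref{R1} and \leref{R30}.
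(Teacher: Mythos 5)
Your proof is correct and follows essentially the same route as the paper: the only nonformal step is deducing a ball inclusion from the set-theoretic inclusion of $R$-spaces via the Closed Graph Theorem (using completeness from \prref{R1}), with the remaining implications handled by the pointwise definition of $\mf m^\flat$ and \leref{R30}, exactly as the paper does. The Minkowski-functional reformulation you add for \eqref{R33} and \eqref{R34} is a harmless elaboration of what the paper leaves implicit.
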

\begin{proof}
	Assume that $R_{\mf m_1}(\mc H)\subseteq R_{\mf m_2}(\mc H)$. Since $\|.\|_{\mf m_j}\geq\|.\|_{\mc H}$, point evaluation is 
	continuous with	respect to each of the norms $\|.\|_{\mf m_j}$. Hence the map 
	$\id:\langle R_{\mf m_1}(\mc H),\|.\|_{\mf m_1}\rangle\to\langle R_{\mf m_2}(\mc H),\|.\|_{\mf m_2}\rangle$ 
	has closed graph. By the Closed Graph Theorem, it is therefore bounded, i.e.\ there exists a positive constant 
	$\lambda$, such that $B_{\mf m_1}(\mc H)\subseteq\lambda B_{\mf m_2}(\mc H)$. 
	Next assume that $B_{\mf m_1}(\mc H)\subseteq\lambda B_{\mf m_2}(\mc H)$ with some $\lambda>0$. Then it follows that 
	\[
		\frac 1\lambda\mf m_1^\flat(z)=\sup_{F\in B_{\mf m_1}(\mc H)} \big|\frac 1\lambda F(z)\big|
		\leq\sup_{G\in B_{\mf m_2}(\mc H)}|G(z)|=\mf m_2^\flat(z)
		\,.
	\]
	Assume finally that $\mf m_1^\flat\lesssim\mf m_2^\flat$. If $F\in B_{\mf m_1}(\mc H)$, then 
	$F\in\mc H$ and 
	\begin{equation}\label{R35}
		|F(z)|\leq\mf m_1^\flat(z)\lesssim\mf m_2^\flat(z)\leq\mf m_2(z),\qquad z\in D
		\,.
	\end{equation}
	Hence $F\in R_{\mf m_2}(\mc H)$, and we conclude that $R_{\mf m_1}(\mc H)\subseteq R_{\mf m_2}(\mc H)$. 
	This finishes the proof of \eqref{R32}. 
	
	The equivalences in \eqref{R33} are an immediate consequence of \eqref{R32}. Those in \eqref{R34} are seen 
	by similar arguments as above, noting that $\mf m_1^\flat=\mf m_2^\flat$ gives a more accurate 
	estimate in \eqref{R35}. 
\end{proof}

When investigating "small" majorants, naturally the question comes up whether there exist minimal ones among all 
admissible majorants. First, let us make precise what we understand by the terms "small" or "minimal". 

\begin{definition}{R38}
	On the set of all pairs $(\mf m,D)$, where $D\subseteq\bb C^+\cup\bb R$ and $\mf m:D\to[0,\infty)$, 
	we define a relation $\preceq$ by
	\[
		(\mf m_1,D_1)\preceq(\mf m_2,D_2)\ \iff\ D_1\supseteq D_2\text{ and }\mf m_1|_{D_2}\lesssim\mf m_2
		\,.
	\]
\end{definition}

Clearly, the relation $\preceq$ is reflexive and transitive, i.e.\ $\Adm\mc H$ is preordered by $\preceq$. 
Moreover, $(\mf m_1,D_1)\preceq(\mf m_2,D_2)$ and
$(\mf m_2,D_2)\preceq(\mf m_1,D_1)$ both hold at the same time if and only if
\[
	D_1=D_2\quad\text{and}\quad \mf m_1\asymp\mf m_2
	\,.
\]
Whenever we speak of order-theoretic terms in the context of majorization, we refer to the order induced 
by $\preceq$. 

The validity of $\mf m_1\preceq\mf m_2$ means that majorization by $\mf m_1$ is a stronger requirement than 
majorization by $\mf m_2$. In fact, $(\mf m_1,D_1)\preceq(\mf m_2,D_2)$ implies 
\[
	R_{\mf m_1}(\mc H)\subseteq R_{\mf m_2}(\mc H)
	\,.
\]
Let us note that in general the converse does not hold, even if $D_1=D_2$. 

\begin{example}{R39}
	Assume that $\mc H=\mc H(E)$ contains the set of all polynomials
	$\bb C[z]$ as a dense linear subspace. Such de Branges subspaces were 
	studied in \cite{baranov:2006}; in particular, whenever $E$ is a canonical product whose zeros all lie on the 
	imaginary axis and have genus zero, the space $\mc H(E)$ will have this property. 

	Let $n\in\bb N$, and set $\mf m_1(z)=(1+|z|)^{n+1/2}$ and $\mf m_2(z)=(1+|z|)^n$, $z\in\bb C^+\cup\bb R$. 
	Then $R_{\mf m_1}(\mc H)=R_{\mf m_2}(\mc H)$ equals 
        the set of all polynomials whose 
	degree does not exceed $n$. Moreover, the norms $\|.\|_{\mf m_1}$
	and $\|.\|_{\mf m_2}$ are equivalent on $R_{\mf m_1}(\mc H)$ and so
	there is $\delta>0$ such that 
	$B_{\delta \mf m_1}(\mc H)\subseteq B_{\mf m_2}(\mc H)$.
	However, it is not true that $(\delta \mf m_1,\bb C^+\cup \bb R)
        \preceq(\mf m_2,\bb C^+\cup \bb R)$.
\end{example}

The following result is an extension of \cite[Theorem 4.2]{baranov.woracek:dbmaj} where the case of majorization 
along $\bb R$ was treated. For a de~Branges space $\mc H$ denote by $\mf r_{\mc H}:\Adm\mc H\to\Sub\mc H$ the map 
\[
	\mf r_{\mc H}(\mf m):=\mc R_{\mf m}(\mc H)
	\,,
\]
and let $\Min\Adm_D\mc H$ be the set of all minimal elements of $\Adm_D\mc H$ modulo $\asymp$. 

\begin{proposition}{R40}
	Let $\mc H$ be a de~Branges space, and let $D\subseteq\bb C^+\cup\bb R$. Then $\mf r_{\mc H}$ 
	maps $\Min\Adm_D\mc H$ bijectively onto 
	\[
		\mf L:=\big\{\mc L\in\mf r_{\mc H}(\Adm_D\mc H):\,\dim\mc L=1\big\}
		\,.
	\]
	If $\mc L\in\mf L$, then $(\mf r_{\mc H}|_{\Min\Adm_D\mc H})^{-1}(\mc L)\asymp\nabla_{\!\mc L}|_D$. 
\end{proposition}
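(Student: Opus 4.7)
The plan is to establish two directions. First, for every $\mc L\in\mf L$ I will show that $\nabla_{\!\mc L}|_D$ lies in $\Min\Adm_D\mc H$ with $\mf r_{\mc H}(\nabla_{\!\mc L}|_D)=\mc L$; this gives surjectivity of $\mf r_{\mc H}|_{\Min\Adm_D\mc H}$ onto $\mf L$ and identifies the candidate inverse. Second, I will show that every $\mf m\in\Min\Adm_D\mc H$ has $\dim\mc R_{\mf m}(\mc H)=1$ and $\mf m\asymp\nabla_{\!\mc R_{\mf m}(\mc H)}|_D$; this gives injectivity and confirms the explicit formula.

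For the first direction I write $\mc L=\spn\{F_0\}$ with $F_0=F_0^\#$ and $\|F_0\|_{\mc H}=1$, so that $\nabla_{\!\mc L}(z)=|F_0(z)|$. Then $F_0\in R_{\nabla_{\!\mc L}|_D}(\mc H)$ is immediate from $|F_0|=|F_0^\#|=\nabla_{\!\mc L}$, and since $\mc L$ is representable by hypothesis, \prref{R31} applied to any $\mf m\in\Adm_D\mc H$ with $\mf r_{\mc H}(\mf m)=\mc L$ gives $\mc R_{\nabla_{\!\mc L}|_D}(\mc H)=\mc L$. For minimality, suppose $\mf m_1\in\Adm_D\mc H$ with $\mf m_1\lesssim\nabla_{\!\mc L}|_D$: then $R_{\mf m_1}(\mc H)\subseteq\mc L$, and non-triviality of $R_{\mf m_1}(\mc H)$ together with $\dim\mc L=1$ forces $F_0\in R_{\mf m_1}(\mc H)$, whence $\nabla_{\!\mc L}|_D=|F_0|\lesssim\mf m_1$, i.e.\ $\mf m_1\asymp\nabla_{\!\mc L}|_D$.

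The second direction rests on the following cornerstone observation: for any nonzero $F\in R_{\mf m}(\mc H)$, the function $\mf m_F(z):=\max\{|F(z)|,|F^\#(z)|\}$ on $D$ lies in $\Adm_D\mc H$ (with $F\in R_{\mf m_F}(\mc H)$ trivially) and satisfies $\mf m_F\leq\|F\|_{\mf m}\,\mf m$ on $D$, so minimality gives $\mf m_F\asymp\mf m$. Assuming $\dim\mc R_{\mf m}(\mc H)\geq 2$ for contradiction, I choose $z_0\in D$ with $\mf m(z_0)>0$ (available because the zero set of an admissible majorant on $D$ is controlled by the discrete divisor $\mf d_{\mf m}$) and aim to produce a nonzero $F\in R_{\mf m}(\mc H)$ with $F(z_0)=F^\#(z_0)=0$: this forces $\mf m_F(z_0)=0$, contradicting $\mf m_F(z_0)\asymp\mf m(z_0)>0$. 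When $\dim\mc R_{\mf m}(\mc H)\geq 3$ or is infinite, $R_{\mf m}(\mc H)$ has algebraic dimension at least three (using that a proper dense subspace of an infinite-dimensional Hilbert space is itself infinite-dimensional), and the pair of evaluation functionals at $z_0,\bar z_0$ has a nontrivial joint kernel, producing the desired $F$. The edge case $\dim\mc R_{\mf m}(\mc H)=2$ is the main technical difficulty: here $R_{\mf m}(\mc H)$ coincides with $\mc R_{\mf m}(\mc H)$ and the codimension count breaks down, so one instead exploits the polynomial-like structure of two-dimensional de~Branges spaces---the Möbius ratio $F_1/F_2$ of two linearly independent real-entire basis elements tends along the unbounded set $D$ to a limit $L\in\hat{\bb R}$, hence $F:=F_1-LF_2$ (or $F:=F_1$ in the case $L=\infty$) is a nonzero real-entire element of $R_{\mf m}(\mc H)$ satisfying $|F|/|F_2|\to 0$ on $D$, contradicting $\mf m_F\asymp\mf m\asymp\mf m_{F_2}$.

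With $\dim\mc R_{\mf m}(\mc H)=1$ established, writing $\mc R_{\mf m}(\mc H)=\spn\{F_0\}$ with $F_0=F_0^\#$ and $\|F_0\|_{\mc H}=1$, a scalar multiple of $F_0$ necessarily lies in $R_{\mf m}(\mc H)$, and applying the cornerstone observation to it yields $\mf m\asymp|F_0|=\nabla_{\!\mc R_{\mf m}(\mc H)}|_D$. The principal obstacle throughout is the two-dimensional edge case, which forces one to invoke the growth-theoretic Möbius analysis above rather than the clean codimension argument that works in all higher dimensions.
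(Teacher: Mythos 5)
Your argument heads in the right general direction, but it misses the crucial simplification that makes the paper's proof short and clean, and as a result you are forced into a case split with a genuinely shaky edge case.

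The paper shows that a minimal $\mf m$ has $\dim\mc R_{\mf m}(\mc H)=1$ as follows: if $\dim R_{\mf m}(\mc H)>1$, pick a nonzero $F\in R_{\mf m}(\mc H)$ with $F(i)=0$; then $G:=\frac{F(z)}{z-i}$ lies in $R_{\tilde{\mf m}}(\mc H)$ with $\tilde{\mf m}(z):=(1+|z|)^{-1}\mf m(z)$, and $\tilde{\mf m}\preceq\mf m$ strictly, contradicting minimality. The point you missed is that one does \emph{not} need $F^\#(i)=0$: since $G^\#(z)=\frac{F^\#(z)}{z+i}$ and $|z+i|$ is comparable to $1+|z|$ \emph{uniformly} on $\bb C^+\cup\bb R$, the bound $|G^\#|\lesssim\tilde{\mf m}$ on $D$ is automatic from $|F^\#|\lesssim\mf m$. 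So a single linear condition $F(i)=0$ suffices, which is available as soon as $\dim R_{\mf m}(\mc H)\geq 2$, with no case distinction.

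Because you instead insist on $F(z_0)=F^\#(z_0)=0$ (two conditions), you need $\dim\geq 3$ for a soft codimension count and must treat $\dim=2$ separately. Your treatment of that edge case invokes, without proof, a specific structural statement about two-dimensional de~Branges spaces (that a quotient of two real-entire basis elements has a limit in $\hat{\bb R}$ along any unbounded $D\subseteq\bb C^+\cup\bb R$). This is plausible via the fact that finite-dimensional de~Branges spaces consist of a fixed zero-free real-entire function times polynomials, but it is not established in your proposal and is a nontrivial input. Moreover your ``cornerstone observation'' that $\mf m_F:=\max\{|F|,|F^\#|\}$ belongs to $\Adm_D\mc H$ for every nonzero $F\in R_{\mf m}(\mc H)$ is asserted, not verified: admissibility is not merely $R_{\mf m_F}(\mc H)\neq\{0\}$, it also requires the regularity conditions of \cite[Definition 3.3]{baranov.woracek:spmaj}, and in particular that $\mc R_{\mf m_F}(\mc H)$ be a genuine dB-subspace. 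If $F$ has non-real zeros, $\spn\{F\}$ is not closed under the (dB3) operation, so one cannot simply take this for granted; it needs an argument.

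On the surjectivity side your outline is essentially correct, but the step ``$\mc R_{\nabla_{\!\mc L}|_D}(\mc H)=\mc L$'' is not a direct consequence of \prref{R31}. What is actually needed is the observation (which the paper makes explicitly) that when $\mc R_{\mf m}(\mc H)=\mc L=\spn\{F_0\}$ is one-dimensional, then $B_{\mf m}(\mc H)=\{\lambda F_0:|\lambda|\leq\|F_0\|_{\mf m}\}$ and hence $\mf m^\flat\asymp\nabla_{\!\mc L}$; combined with \leref{R30} this gives $\mc R_{\nabla_{\!\mc L}|_D}(\mc H)=\mc R_{\mf m^\flat|_D}(\mc H)=\mc R_{\mf m}(\mc H)=\mc L$. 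The same observation is also what makes the injectivity claim and the identification of the inverse map clean; your proposal reaches the conclusion but without isolating the lemma that carries the weight.
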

\begin{proof}
	Assume that $\mf m\in\Adm_D\mc H$ is minimal, but $\dim R_{\mf m}(\mc H)>1$. Then there exists an element 
	$F\in R_{\mf m}(\mc H)$ with $F(i)=0$. It follows that 
	\[
		\frac{F(z)}{z-i}\in R_{\tilde{\mf m}}(\mc H)
	\]
	where $\tilde{\mf m}(z):=(1+|z|)^{-1}\mf m(z)$, $z\in D$. This yields that $\tilde{\mf m}\in\Adm_D\mc H$. However, 
	$\tilde{\mf m}\preceq\mf m$ but $\mf m\not\asymp\tilde{\mf m}$, and we have obtained a contradiction. We conclude 
	that $\mf r_{\mc H}$ maps $\Min\Adm_D\mc H$ into $\mf L$. 
	
	We proceed with an intermediate remark: 
	Let $\mc L$ be any one-dimensional dB-subspace of $\mc H$. Fix $F_0\in\mc L\setminus\{0\}$, then $\mc L=\spn\{F_0\}$ and thus 
	$\nabla_{\!\mc L}(z)=\|F_0\|_{\mc H}^{-1}|F_0(z)|$. If $\mf m\in\Adm_D\mc H$ has the property that 
	$\mc L=\mc R_{\mf m}(\mc H)$, then 
	\[
		B_{\mf m}(\mc H)=\big\{\lambda F_0:\,|\lambda|\leq\|F_0\|_{\mf m}\big\}\ \text{ and }\ 
		\mf m^\flat(z)=\frac{|F_0(z)|}{\|F_0\|_{\mf m}}
		\,.
	\]
	It follows that $\mf m^\flat\asymp\nabla_{\!\mc L}|_D$. 
	
	By \leref{R30} a minimal majorant $\mf m$ satisfies $\mf m\asymp\mf m^\flat|_D$. Hence, 
	together with the first paragraph of this proof, this remark already shows that 
	$\mf r_{\mc H}|_{\Min\Adm_D\mc H}$ is injective. To see surjectivity, let 
	$\mc L\in\mf L$ be given. Then the above remark yields $\mc L=\mc R_{\nabla_{\!\mc L}|_D}(\mc H)$. 
	If $\mf m\in\Adm_D\mc H$ and $\mf m\preceq\nabla_{\!\mc L}|_D$, then 
	$\{0\}\neq\mc R_{\mf m}(\mc H)\subseteq\mc R_{\nabla_{\!\mc L}|_D}(\mc H)=\mc L$, and hence also 
	$\mc R_{\mf m}(\mc H)=\mc L$. It follows that $\mf m^\flat|_D\asymp\nabla_{\!\mc L}|_D$. 
	Since $\mf m^\flat|_D\lesssim\mf m$, we obtain that also $\mf m\asymp\nabla_{\!\mc L}|_D$. 
	Thus $\nabla_{\!\mc L}|_D\in\Min\Adm_D\mc H$. 
\end{proof}

In conjunction with the representability results shown in \cite{baranov.woracek:spmaj}, we obtain the 
following analogue of \cite[Theorem 4.9]{baranov.woracek:dbmaj} for majorization on the imaginary half-line. 

\begin{corollary}{R41}
	Let $\mc H$ be a de~Branges space, and set $D:=i[1,\infty)$. Then the set $\Adm_D\mc H$ contains a 
	minimal element	if and only if $\Sub^*\mc H$ contains a one-dimensional subspace $\mc L_0$. 
	In this case there exists exactly one minimal element, namely $\nabla_{\!\mc L_0}|_D$. 
\end{corollary}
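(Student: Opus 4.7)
The plan is to combine Proposition~\prref{R40} with the representability result~\eqref{R42}, the sharpness information in Proposition~\prref{R37}, and de~Branges' Ordering Theorem. By \prref{R40}, the map $\mf m\mapsto\mc R_{\mf m}(\mc H)$ induces a bijection from $\Min\Adm_D\mc H$ (modulo $\asymp$) onto $\mf L=\{\mc L\in\mf r_{\mc H}(\Adm_D\mc H):\dim\mc L=1\}$, with inverse $\mc L\mapsto\nabla_{\!\mc L}|_D$. Hence the corollary reduces to identifying $\mf L$ with the collection of one-dimensional subspaces in $\Sub^*\mc H$, together with the observation that this collection has at most one element.

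The inclusion $\{\mc L_0\in\Sub^*\mc H:\dim\mc L_0=1\}\subseteq\mf L$ is immediate from \eqref{R42}. Conversely, take $\mc L=\spn\{F_0\}\in\mf L$ with $F_0=F_0^\#$ and let $\mf m$ be a corresponding minimal majorant; I claim $F_0$ has no zeros, so that $\mc L\in\Sub^*\mc H$. Real zeros are eliminated by \prref{R37}: since $\bb R\cap D=\emptyset$ gives $\mf d_{\mf m}(x)=0$ for $x\in\bb R$, and $\mf d_{\mc H}\equiv 0$ by the standing assumption, we obtain $\mf d_{\mc L}(x)=\mf d_{\mf m^\flat}(x)=0$. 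For a non-real zero $w_0\in\bb C^+$ of $F_0$ (by symmetry also $F_0(\bar w_0)=0$), write $\mc H=\mc H(E)$ and factor $F_0/E=\frac{z-w_0}{z-\bar w_0}\,h$ with $h\in H^2(\bb C^+)$. For $G(z):=F_0(z)/[(z-w_0)(z-\bar w_0)]$ one then has $G/E=h/(z-\bar w_0)^2\in H^2$ (since $(z-\bar w_0)^{-1}$ is bounded on $\bb C^+$), and symmetrically $G^\#/E\in H^2$, so $G\in\mc H$. The majorant $\tilde{\mf m}(z):=\mf m(z)/(1+|z|^2)$ on $D$ therefore lies in $\Adm_D\mc H$ (as $G\in R_{\tilde{\mf m}}(\mc H)$) and satisfies $\tilde{\mf m}\preceq\mf m$ but $\tilde{\mf m}\not\asymp\mf m$, contradicting minimality of $\mf m$.

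Uniqueness is then a direct consequence of de~Branges' Ordering Theorem: $\Sub^*\mc H$ is linearly ordered by inclusion, so it contains at most one one-dimensional subspace. Combining this with the bijection of \prref{R40} gives both directions of the corollary and the explicit form $\mf m\asymp\nabla_{\!\mc L_0}|_D$ of the unique minimal element. The main technical obstacle is verifying $G\in\mc H$ after removing a conjugate pair of non-real zeros: axiom~(dB3) alone only supplies Blaschke-type quotients, so one has to pass through the Hermite--Biehler realization $\mc H(E)$ and use standard $H^2$ calculus to legitimize the division by the real quadratic $(z-w_0)(z-\bar w_0)$.
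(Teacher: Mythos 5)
Your proof is correct and follows the same high-level architecture as the paper's: both route through \prref{R40}, reduce the claim to identifying $\mf L$ with the one-dimensional members of $\Sub^*\mc H$, and invoke de~Branges' Ordering Theorem to extract uniqueness. Where you differ is in how that identification is established. The paper simply cites Theorem~4.1 of \cite{baranov.woracek:spmaj}, which asserts $\mf r_{\mc H}(\Adm_D\mc H)=\Sub^*\mc H$ for $D=i[1,\infty)$, and is done; you instead re-derive from scratch the part of that statement you actually need, splitting the inclusion $\mf L\subseteq\{\mc L_0\in\Sub^*\mc H:\dim\mc L_0=1\}$ into a real-zero analysis and a non-real-zero analysis.

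Your real-zero step is exactly right: \prref{R37} gives $\mf d_{\mc L}=\mf d_{\mf m^\flat}=\max\{\mf d_{\mf m},\mf d_{\mc H}\}$, and both terms vanish on $\bb R$ since $D\cap\bb R=\emptyset$ and $\mf d_{\mc H}\equiv 0$. The non-real-zero step, however, is superfluous. For any de~Branges space $\mc L=\mc H(E_1)$ and non-real $w$ one has $K_1(w,w)>0$ (since $|E_1^\#(w)|<|E_1(w)|$ for $w\in\bb C^+$ by Hermite--Biehler), so $\mf d_{\mc L}(w)=0$ automatically off the real axis; and, even more elementarily, if $\mc L=\spn\{F_0\}$ is a one-dimensional dB-subspace and $F_0(w)=0$ at some non-real $w$, then (dB3) would force $\frac{z-\bar w}{z-w}F_0\in\spn\{F_0\}$, making $\frac{z-\bar w}{z-w}$ constant --- a contradiction. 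So the only genuine obstruction lives on $\bb R$, and your appeal to \prref{R37} already resolves it; the $H^2$ detour through the Hermite--Biehler realization, while technically sound, re-proves a structural fact that the axioms hand over directly.
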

\begin{proof}
	By \cite[Theorem 4.1]{baranov.woracek:spmaj}, we have $\mf r_{\mc H}(\Adm_D\mc H)=\Sub^*\mc H$. By de~Branges' Ordering Theorem, the set 
	$\Sub^*\mc H$ can contain at most one one-dimensional subspace. 
\end{proof}



{\footnotesize
\begin{flushleft}
	A.\,Baranov\\
	Department of Mathematics and Mechanics\\
	Saint Petersburg State University\\
	28, Universitetski pr.\\
	198504 Petrodvorets\\
	RUSSIA\\
	email: a.baranov@ev13934.spb.edu\\[5mm]
\end{flushleft}
\begin{flushleft}
	H.\,Woracek\\
	Institut f\"ur Analysis und Scientific Computing\\
	Technische Universit\"at Wien\\
	Wiedner Hauptstr.\ 8--10/101\\
	A--1040 Wien\\
	AUSTRIA\\
	email: harald.woracek@tuwien.ac.at\\[5mm]
\end{flushleft}
}

\end{document}